\DeclareMathOperator\dist{dist}
\def\N{\mathbb{N}}
\def\R{\mathbb{R}}
\def\s{\mathbf{s}}
\def\x{\mathbf{x}}
\def\y{\mathbf{y}}
\begin{document}

\markboth{W.~LIU, Z.Q.~XIE, AND W.F.~YI}{Nonmonotone Local Minimax Methods for Finding Multiple Saddle Points}

\title{NONMONOTONE LOCAL MINIMAX METHODS FOR FINDING MULTIPLE SADDLE POINTS}

\author{Wei Liu\thanks{
South China Research Center for Applied Mathematics and Interdisciplinary Studies, South China Normal University, Guangzhou 510631, China; \\
Key Laboratory of Computing and Stochastic Mathematics (Ministry of Education), Hunan Normal University, Changsha, Hunan 410081, China. \\ 
Email: wliu@m.scnu.edu.cn}
\and
Ziqing Xie\footnote{Corresponding author.}
\thanks{Key Laboratory of Computing and Stochastic Mathematics (Ministry of Education), Hunan Normal University, Changsha, Hunan 410081, China. \\
Email: ziqingxie@hunnu.edu.cn}
\and
Wenfan Yi\thanks{Hunan Provincial Key Laboratory of Intelligent Information Processing and Applied Mathematics, School of Mathematics, Hunan University, Changsha, Hunan 410082, China. \\
Email: wfyi@hnu.edu.cn}
}

\maketitle

\begin{abstract}
In this paper, by designing a normalized nonmonotone search strategy with the Barzilai--Borwein-type step-size, a novel local minimax method (LMM), which is a globally convergent iterative method, is proposed and analyzed to find multiple (unstable) saddle points of nonconvex functionals in Hilbert spaces. Compared to traditional LMMs with monotone search strategies, this approach, which does not require strict decrease of the objective functional value at each iterative step, is observed to converge faster with less computations. Firstly, based on a normalized iterative scheme coupled with a local peak selection that pulls the iterative point back onto the solution submanifold, by generalizing the Zhang--Hager (ZH) search strategy in the optimization theory to the LMM framework, a kind of normalized ZH-type nonmonotone step-size search strategy is introduced, and then a novel nonmonotone LMM is constructed. Its feasibility and global convergence results are rigorously carried out under the relaxation of the monotonicity for the functional at the iterative sequences. Secondly, in order to speed up the convergence of the nonmonotone LMM, a globally convergent Barzilai--Borwein-type LMM (GBBLMM) is presented by explicitly constructing the Barzilai--Borwein-type step-size as a trial step-size of the normalized ZH-type nonmonotone step-size search strategy in each iteration. Finally, the GBBLMM algorithm is implemented to find multiple unstable solutions of two classes of semilinear elliptic boundary value problems with variational structures: one is the semilinear elliptic equations with the homogeneous Dirichlet boundary condition and another is the linear elliptic equations with semilinear Neumann boundary conditions. Extensive numerical results indicate that our approach is very effective and speeds up the LMMs significantly.
\end{abstract}

\begin{classification}
65K10, 58E05, 49M37, 35J20.
\end{classification}

\begin{keywords}
multiple saddle points,
local minimax method,
Barzilai--Borwein gradient method,
normalized nonmonotone search strategy,
global convergence
\end{keywords}

\section{Introduction}

Let $X$ be a Hilbert space. The critical points of a continuously Fr\'{e}chet-differentiable functional $E:X\to\R$ are defined as solutions to the associated Euler--Lagrange equation
\[ E'(u)=0, \quad  u\in X, \]
where $E'$ is the Fr\'{e}chet-derivative of $E$. The first candidates of critical points are local minima and maxima on which traditional calculus of variations and optimization methods focus. Critical points that are not local extrema are unstable and called saddle points. When the second-order Fr\'{e}chet-derivative $E''$ exists at some critical point $u_*$, the instability of $u_*$ can be depicted by its Morse index (MI) \cite{Chang1993}. In fact, the MI of such a critical point $u_*$, denoted by $\mathrm{MI}(u_*)$, is defined as the maximal dimension of subspaces of $X$ on which the linear operator $E''(u_*)$ is negative-definite. In addition, $u_*$ is said to be nondegenerate if $E''(u_*)$ is invertible. For a nondegenerate critical point, if its $\mathrm{MI}=0$, it is a strict local minimizer and then a stable critical point, while if its $\mathrm{MI}>0$, it is a saddle point and then an unstable critical point. Generally speaking, the higher the MI is, the more unstable the critical point is.

Saddle points, as unstable equilibria or transient excited states, are widely found in numerous nonlinear problems in physics, chemistry, biology and materials science \cite{Chang1993,CZN2000IJBC,H1973AA,LWZ2013JSC,Rabinowitz1986,XYZ2012SISC,ZRSD2016npj}. They play an important role in many interesting applications, such as studying rare transitions between different stable/metastable states \cite{CLEZS2010PRL,ERV2002PRB} and predicting morphologies of critical nucleus in the solid-state phase transformation \cite{ZCD2007PRL,ZRSD2016npj}, etc.. Due to various difficulties in direct experimental observation, more and more attentions have been paid to develop effective and reliable numerical methods for catching saddle points. Compared with the computation of stable critical points, it is much more challenging to design a stable, efficient and globally convergent numerical method for finding saddle points due to the instability and multiplicity. In recent years, motivated by some early algorithms for searching for saddle points in computational physics/chemistry/biology, the dimer method \cite{HJ1999JCP,ZD2012SINUM}, the gentlest ascent dynamics \cite{EZ2011NL}, the climbing string method \cite{ERV2002PRB,RV2013JCP} and etc., have been proposed and successfully implemented to find saddle points. It is noted that methods mentioned above mainly consider saddle points with $\mathrm{MI}=1$.

With the development of science and technology, the stable numerical computation of multiple unstable critical points with high MI has attracted more and more attentions both in theories and applications. Studies of relevant numerical methods have been carried out in the literature. Inspired by the minimax theorems in the critical point theory (see, e.g., \cite{Rabinowitz1986}) and the work of Choi and McKenna \cite{CM1993NA}, Ding, Costa and Chen \cite{DCC1999NA} and Chen, Zhou and Ni \cite{CZN2000IJBC}, a local minimax method (LMM) was developed by Li and Zhou in \cite{LZ2001SISC,LZ2002SISC} with its global convergence established in \cite{LZ2002SISC,Z2017CAMC}. As shown in \cite{Zhou2005MC}, for a nondegenerate critical point found by the LMM, its MI is determined {\em a priori} by the dimension of the given support space $L$ (see the detail in Sect.~\ref{subsec:lmt}) as $\mathrm{MI}=\dim(L)+1$. Therefore, the LMM is capable of selectively finding the saddle points with any given $\mathrm{MI}=n\geq1$ by appropriately constructing the support space $L$ with $\dim(L)=n-1$. Then, in \cite{XYZ2012SISC}, Xie, Yuan and Zhou modified the LMM with a significant relaxation for the domain of the local peak selection, which is a vital definition for the LMM (see below), and provided the global convergence analysis for this modified LMM by overcoming the lack of homeomorphism of the local peak selection. More modifications and developments of the LMM for multiple solutions of various problems, such as elliptic partial differential equations (PDEs) with nonlinear boundary conditions, quasi-linear elliptic PDEs in Banach spaces, upper semi-differentiable locally Lipschitz continuous functional and so on, have been also studied. We refer to \cite{LWZ2013JSC,Y2013MC,YZ2005SISC,Z2017CAMC} and references therein for this topic. In addition to the LMM, there are other optimization-based approaches for finding saddle points in the literature. In \cite{GLZ2015SINUM}, Gao, Leng and Zhou proposed an iterative minimization formulation (IMF) for searching index-1 saddle points and generalized it to the high-index case. They further proposed an efficient algorithm of the IMF with some adaptive strategies \cite{GLZ2015SINUM}. Actually, at each cycle of the IMF, a local minimization subproblem near the current position has to be solved, and its objective function is locally constructed by transforming the given energy function according to the information of the current position and the minimal mode of the Hessian. It was proved in \cite{GLZ2015SINUM} that, under certain conditions, the IMF for finding a saddle point has a quadratic local convergence rate with respect to the number of the iterative minimization subproblems. In \cite{GZ2018JCP}, Gu and Zhou designed a convex splitting scheme for solving the minimization subproblem at each cycle of the IMF, which allows for large step-sizes. On the other hand, Xie and Chen et al. proposed a search extension method and its modified versions for finding multiple solutions of nonlinear elliptic equations based on generalized Fourier series expansion and homotopy method; see \cite{CX2004CMA,CX2008SCM,LXY2021SSM,XCX2005IMA}. In \cite{YZZ2019SISC}, Yin, Zhang and Zhang proposed a high-index optimization-based shrinking dimer method to find unconstrained high-index saddle points. Recently, a constrained gentlest ascent dynamics was introduced in \cite{LXY2022JCP} to find constrained saddle points with any specified MI and applied to study excited states of Bose--Einstein condensates.

Since our work in this paper is inspired by the traditional LMMs, we want to recall the history of them a little more. The idea of LMMs characterizes a saddle point with specified MI as a solution to a two-level nested local optimization problem consisting of inner local maximization and outer local minimization. Owing to this local minimax characterization, the corresponding numerical algorithm designed for finding multiple saddle points with $\mathrm{MI}\geq 1$ becomes possible. In practical computations, the inner local maximization is equivalent to an unconstrained optimization problem in an Euclidean space with a fixed dimension, and thus many standard optimization algorithms can be employed to solve it efficiently. While the outer local minimization is more challenging because it solves an optimization problem in an infinite-dimensional submanifold (as described later) and is the main concern to the LMMs' family. In \cite{LZ2001SISC}, an exact step-size rule combined with a normalized gradient descent method was introduced for the outer local minimization, and then the LMM was successfully applied to solve a class of semilinear elliptic boundary value problems (BVPs) for multiple unstable solutions. However, the exact step-size rule is not only expensive in the numerical implementation, but also inconvenient for establishing convergence properties \cite{LZ2002SISC}. To compensate for these shortcomings, in the subsequent work \cite{LZ2002SISC}, Li and Zhou introduced a new step-size rule that borrowed the idea of the Armijo or backtracking line search rule \cite{Armijo1966} prefered in the optimization theory. Thanks to this step-size rule, the global convergence result for the LMM was obtained in \cite{LZ2002SISC,Z2017CAMC}. Then, a normalized Goldstein-type LMM was put forward in our work in \cite{LXY2021CMS} by recommending a normalized Goldstein step-size search rule that guarantees the sufficient decrease of the energy functional and prevents the step-size from being too small simultaneously. The feasibility and global convergence analysis of this approach was also provided. Recently, as a subsequent work \cite{LXY-NWPLMM}, both normalized Wolfe--Powell-type and strong Wolfe--Powell-type step-size search rules for the LMM were introduced and global convergence of the corresponding algorithms for general decrease directions were verified. The LMM with general descent direction provides the possibility for accelerating the convergence by appropriately choosing the descent direction. Above all, up to now the algorithm design and convergence analysis of monotonically decreasing LMMs with several typically normalized inexact step-size search rules have been systematically investigated. However, it should be pointed out that, so far all LMMs have to guarantee the sufficient decrease of the objective functional, and then the normalized monotone step-size search rules are crucial. These strict requirement of the monotone decrease may lead to expensive computations and make the convergence slower.

In this paper, we focus on proposing a novel LMM with fast convergence for finding multiple saddle points by developing suitable normalized nonmonotone step-size search strategies to replace normalized monotone step-size search rules utilized in traditional LMMs. Let us recall that, in optimization theory, the Barzilai--Borwein (BB) method developed by Barzilai and Borwein in 1988 \cite{BB1988IMANUM} is an efficient nonmonotone method and does not require the decrease of the objective function value at each iteration, distinguished from monotone methods. Actually, it can be regarded as a gradient method with modified step-sizes, which is enlightened by the idea of the quasi-Newton method for avoiding matrix computations. Compared with the classical steepest descent method put forward by Cauchy in \cite{Cauchy1847}, the BB method requires less computational efforts and often speeds up the convergence significantly \cite{BB1988IMANUM,Fletcher2005}. Recently, some adaptive BB methods in optimization theory were developed \cite{DHL2019COA,HDL2021SIOPT} and generally performed better than the conventional BB methods. To ensure the global convergence, the BB method is usually integrated with nonmonotone line search strategies to minimize general smooth functions in the optimization theory \cite{R1997SIOPT,WY2013MP}. It should be pointed out that, if a nonmonotone search strategy is used, occasional growth of the function value during the iteration is permitted. Two popular nonmonotone search strategies, i.e., the Grippo--Lampariello--Lucidi (GLL) search strategy and Zhang--Hager (ZH) search strategy were introduced in \cite{GLL1986SINUM} and \cite{ZH2004SIOPT}, respectively. By combining the GLL nonmonotone search strategy with the BB method, Raydan introduced an efficient globalized BB method for large-scale unconstrained optimization problems in \cite{R1997SIOPT}. The relevant algorithm is competitive with some standard conjugate gradient methods \cite{R1997SIOPT}. The ZH nonmonotone search strategy \cite{ZH2004SIOPT}, since it was proposed, has also been widely used in the literature to implement the globalized BB method; see, e.g., \cite{WY2013MP}. Recently, Zhang et al. \cite{YZZ2019SISC,ZDZ2016SISC} united the BB method with the shrinking dimer method \cite{ZD2012SINUM} to efficiently find saddle points with specified MI without theoretical proof for the global convergence yet. Besides, as far as we know, there is no more work applying the BB method for finding saddle points.

Inspired by traditional LMMs, the BB method and its globalization strategies founded on nonmonotone step-size searches in the optimization theory, this paper is devoted to establish globally convergent nonmonotone LMMs for finding multiple saddle points of nonconvex functionals in Hilbert spaces. Actually, a normalized ZH-type nonmonotone step-size search rule, which can be viewed as a relaxation to the normalized monotone Armijo-type step-size search rule, will be introduced to the LMM. Its feasibility will be verified. However, the available global convergence analysis for the traditional monotone LMMs in which the monotone behavior plays an important role may not be directly applicable to the newly proposed nonmonotone ones. Fortunately, the global convergence of the normalized ZH-type nonmonotone LMM will be carried out rigorously by the usage of the monotonicity of a convex combination of the functional values at past iterative steps and some nonlinear functional analysis tools combined with the compactness and the proof by contradiction. Then, a globally convergent Barzilai--Borwein-type LMM (GBBLMM) will be presented by explicitly constructing the BB-type step-size as a trial step-size in each iteration to speed up the LMM. It is worthwhile to point out that, like its LMM family members, the GBBLMM is also able to find saddle points in a stable way and avoid repeatedly being convergent to previously found saddle points. Finally, the GBBLMM will be numerically applied and compared with traditional LMMs in finding multiple unstable solutions of various problems. For example, the Lane-Emden equation, H\'{e}non equation in the astrophysics and the linear elliptic equations with semilinear Neumann boundary conditions which widely appear in many scientific fields, such as corrosion/oxidation modelings, metal-insulator or metal-oxide semiconductor systems.

The rest of this paper is organized as follows. In Sect.~\ref{sec:pre}, the preliminaries for the LMM and the BB method in the optimization theory are provided. In Sect.~\ref{sec:nmlmm}, the normalized ZH-type nonmonotone step-size rule is introduced to the LMM and its corresponding feasibility and global convergence are analyzed. Then, the GBBLMM is presented in Sect.~\ref{sec:gbblmm} by constructing an explicit BB-type trial step-size of the nonmonotone step-size search rule. Further, in Sect.~\ref{sec:numer}, extensive numerical results are stated and compared with traditional LMMs. Finally, some concluding remarks are provided in Sect.~\ref{sec:con}.

\section{Preliminaries}
\label{sec:pre}
In this section, for the convenience of discussion later, we revisit the basic ideas and algorithm framework of the LMM as well as the BB method in the optimization theory.

\subsection{Local minimax principle}\label{subsec:lmt}

Throughout this paper, we assume that $E$ has a local minimizer at $0\in X$ and focus on finding nontrivial saddle points of $E$. We begin with some notations and basic lemmas.

Let $(\cdot,\cdot)$ and $\|\cdot\|$ be the inner product and norm in $X$ respectively, $\langle\cdot,\cdot\rangle$ the duality pairing between $X$ and its dual space $X^*$, $S=\{v\in X:\|v\|=1\}$ the unit sphere in $X$, and $Y^\bot$ the orthogonal complement to $Y\subset X$. Suppose that $L\subset X$ is a finite-dimensional closed subspace and serves as the so-called support or base space \cite{LZ2001SISC,LZ2002SISC,XYZ2012SISC}. Define the half subspace $[L, v]=\{tv+w^L:t\geq0,w^L\in L\}$ for any $v\in S$.
\begin{definition}[\cite{XYZ2012SISC}]\label{def:pv}
	Denote $2^X$ as the set of all subsets of $X$. A {\em peak mapping} of $E$ w.r.t. $L$ is a set-valued mapping $P:X\to 2^X$ such that
	\[ P(v)=\left\{u\in X:\mbox{$u$ is a local maximizer of $E$ on $[L, v]$}\right\},\quad\forall\, v\in S. \]
	A {\em peak selection} of $E$ w.r.t. $L$ is a mapping $p:S\to X$ such that $p(v)\in P(v)$, $\forall\, v\in S$. For a given $v\in S$, we say that $E$ has a {\em local peak selection} w.r.t. $L$ at $v$, if there exists a neighborhood $N_v$ of $v$ and a mapping $p:N_v\cap S\to X$ such that $p(u)\in P(u)$, $\forall\, u\in N_v\cap S$.
\end{definition}

Set $p(v)$ to be a local peak selection of $E$ w.r.t. $L$ at $v=v^\bot+v^L\in S\backslash L$ with $v^\bot\in L^\bot\backslash\{0\}$ and $v^L\in L$. Since $p(v)\in[L,v]$, it can be expressed as $p(v)=t_vv+w_v^L$ with $t_v\geq0$ and $w_v^L\in L$. In the subsequent analysis, we assume $t_v>0$, i.e., $p(v)\notin L$, to avoid the degeneracy. Hereafter, we use the notation $A\backslash B=\{a\in A: a\notin B\}$ to denote the relative complement of the set $B$ in the set $A$. The definition of the local peak selection obviously leads to the following property.
\begin{lemma}[\cite{XYZ2012SISC}]\label{lem:orth}
	Suppose $E\in C^1(X,\R)$ and let $p(v)$ be a local peak selection of $E$ w.r.t. $L$ at $v\in S\backslash L$ satisfying $p(v)\notin L$, then $\langle E'(p(v)),p(v)\rangle=0$ and $\langle E'(p(v)),w\rangle=0$, $\forall\, w\in[L, v]$.
\end{lemma}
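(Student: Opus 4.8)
The plan is to reduce the claim to the first‑order necessary optimality condition (Fermat's rule) for an unconstrained maximizer in a finite‑dimensional space. By Definition~\ref{def:pv}, $p(v)$ is a local maximizer of $E$ on the half subspace $[L,v]$. Writing $v=v^\bot+v^L$ with $0\neq v^\bot\in L^\bot$ and $v^L\in L$, one checks that $[L,v]=\{t v^\bot+w:t\geq0,\ w\in L\}$, so $[L,v]$ sits inside the finite‑dimensional subspace $M:=L\oplus\spn\{v^\bot\}=\spn(L\cup\{v\})$, of dimension $\dim L+1$. Two reductions simplify matters. First, for any $w\in L$ both $p(v)+sw$ and $p(v)-sw$ lie in $[L,v]$ for all real $s$ (the $v^\bot$‑component is untouched), so $s\mapsto E(p(v)+sw)$ has an unconstrained local maximum at $s=0$ and hence $\langle E'(p(v)),w\rangle=0$ for every $w\in L$. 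Second, because the cone $[L,v]$ spans $M$ linearly, the two asserted identities together are equivalent to the single statement that $E'(p(v))$ annihilates all of $M$, and $\langle E'(p(v)),p(v)\rangle=0$ then follows automatically from $p(v)\in[L,v]\subset M$. So it remains only to show $\langle E'(p(v)),v^\bot\rangle=0$.

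This is the delicate point, and where I expect the main obstacle to lie. Write $p(v)=t\,v^\bot+w$ with $t\geq0$ and $w\in L$. If $t>0$, then $p(v)+s\,v^\bot\in[L,v]$ whenever $|s|$ is small, so $s\mapsto E(p(v)+s\,v^\bot)$ has an unconstrained local maximum at $s=0$ and $\langle E'(p(v)),v^\bot\rangle=0$; equivalently, $p(v)$ lies in the relative interior of $[L,v]$ in $M$ and is therefore an unconstrained local maximizer of the $C^1$ restriction $E|_M$. The troublesome case is $t=0$, i.e.\ $p(v)\in L$: then $p(v)+s\,v^\bot\in[L,v]$ only for $s\geq0$, and maximality yields merely $\langle E'(p(v)),v^\bot\rangle\leq0$, which is not enough — and indeed the statement can genuinely fail for a boundary maximizer. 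Ruling this out uses the fact, which in this line of work is either incorporated into the definition of a (local) peak selection or guaranteed by a standing hypothesis on the geometry of $E$ near the local minimizer $0$, that $p(v)\notin L$; so the real content of the proof is to invoke or re‑derive $p(v)\notin L$.

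Granting $p(v)\notin L$, i.e.\ $t>0$, the conclusion is immediate: since $E\in C^1(X,\R)$, the restriction $E|_M$ is continuously differentiable on the finite‑dimensional space $M$, and its interior local maximizer $p(v)$ must be a critical point of $E|_M$, so $\langle E'(p(v)),z\rangle=0$ for every $z\in M$. Taking $z=p(v)$ gives $\langle E'(p(v)),p(v)\rangle=0$, and taking $z=w\in[L,v]\subset M$ gives $\langle E'(p(v)),w\rangle=0$, as required. In short, the argument is elementary calculus in $M$ once the relative‑interior (non‑degeneracy) fact $p(v)\notin L$ is secured, which is the only step carrying genuine weight.
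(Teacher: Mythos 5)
Your core argument is exactly the intended one. The paper does not actually prove Lemma~\ref{lem:orth}; it cites \cite{XYZ2012SISC} and remarks that the statement ``obviously'' follows from Definition~\ref{def:pv}, and the proof behind that remark is precisely your Fermat-type argument in the finite-dimensional space $M=L\oplus\spn\{v^\bot\}$: two-sided perturbations $p(v)\pm sw$ with $w\in L$ give $\langle E'(p(v)),w\rangle=0$ on $L$, and two-sided perturbations in the $v^\bot$-direction give the remaining orthogonality whenever $p(v)$ lies in the relative interior of $[L,v]$, i.e.\ $t_v>0$. In that regime your proof is complete and correct.

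The one point to correct is your resolution of the boundary case. The claim that $p(v)\notin L$ is ``incorporated into the definition of a (local) peak selection or guaranteed by a standing hypothesis'' is not true of this paper: Definition~\ref{def:pv} only requires $p(v)$ to be a local maximizer of $E$ on $[L,v]$ and places no restriction keeping it off $L$, and the standing assumption that $E$ has a local minimizer at $0$ does not force $t_v>0$ either (one can build a $C^1$ functional with a local minimum at $0$ whose maximizer on the half-subspace sits on $L$ with $\langle E'(p(v)),v^\bot\rangle<0$, so the second identity genuinely fails there). What actually happens in this paper is that the non-degeneracy is imposed \emph{downstream} as an explicit hypothesis --- $t_v>0$ in Lemma~\ref{lem:armijo} and Theorem~\ref{thm:lmt0}, $t_v\geq\delta$ in Theorem~\ref{thm:lmt}, and $t_k\geq\delta$ in assumption (ii) of Theorem~\ref{thm:cvg-zhlmm} --- i.e.\ everywhere the full strength of $g\in[L,v]^\bot$ is exploited. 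So the lemma should be read as implicitly restricted to the relative-interior case; your proof then closes, and, as you correctly note, the part of the conclusion that survives at a boundary maximizer is $E'(p(v))|_L=0$ and hence $\langle E'(p(v)),p(v)\rangle=0$.
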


Lemma~\ref{lem:orth} implies that each local peak selection belongs to the Nehari manifold
\[ \mathcal{N}_E=\{u\in X\backslash\{0\}:\langle E'(u),u\rangle=0\}, \]
which contains all nontrivial critical points of $E$. Denote $g=\nabla E(p(v))\in X$ as the gradient of $E$ at $p(v)$, i.e., the canonical dual or Riesz representer of $E'(p(v))\in X^*$, which is defined by
\[ (g,\phi)=\langle E'(p(v)),\phi\rangle,\quad \forall\, \phi\in X. \]
Obviously, Lemma~\ref{lem:orth} yields $g\in[L, v]^\bot$ if $p(v)\notin L$. In addition, for each $\alpha>0$, there is a unique orthogonal decomposition for $v(\alpha)={(v-\alpha g)}/{\|v-\alpha g\|}$ as
\begin{equation}\label{eq:va-dcom}
	v(\alpha)=\frac{v-\alpha g}{\|v-\alpha g\|} =\frac{v-\alpha g}{\sqrt{1+\alpha^2\|g\|^2}}
	=v^L(\alpha)+v^\bot(\alpha),
\end{equation}
with
\begin{equation}\label{eq:vaL}
	v^L(\alpha)=\frac{v^L}{\sqrt{1+\alpha^2\|g\|^2}}\in L,\quad
	v^\bot(\alpha)=\frac{v^\bot-\alpha g}{\sqrt{1+\alpha^2\|g\|^2}}\in L^\bot.
\end{equation}
Two lemmas below follow from direct calculations and their proofs are referred to those of Lemmas~3.2-3.4 in \cite{LXY2021CMS}.
\begin{lemma}[\cite{LXY2021CMS}]\label{lem:vs}
	For $v(\alpha)$ expressed in \eqref{eq:va-dcom} with $v=v^\bot+v^L\in S\backslash L$, $v^\bot\in L^\bot\backslash\{0\}$, $v^L\in L$ and $g=\nabla E(p(v))\in[L,v]^\perp$, there hold that $v(\alpha)\in S\backslash L$, $\|v^L(\alpha)\|\leq\|v^L\|<1$ and $\|v^\bot(\alpha)\|\geq\|v^\bot\|>0$, $\forall\,\alpha>0$.
	Further, if $g\neq0$, then
	\begin{equation}\label{eq:va-v}
		\frac{\alpha\|g\|}{\sqrt{1+\alpha^2\|g\|^2}}<\|v(\alpha)-v\|<\alpha\|g\|, \quad\forall\,\alpha>0,
	\end{equation}
	and
	\[ \lim_{\alpha\to0^+}\frac{\|v(\alpha)-v\|}{\alpha\|g\|}=1. \]
\end{lemma}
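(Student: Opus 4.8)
The plan is to reduce everything to orthogonal decompositions and the Pythagorean theorem, exploiting the key fact from Lemma~\ref{lem:orth} that $g=\nabla E(p(v))\in[L,v]^\bot$; concretely this yields $(g,v)=0$, $(g,w)=0$ for every $w\in L$, and hence also $(g,v^\bot)=0$.

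First I would dispose of the structural claims. The identity $\|v(\alpha)\|=1$ is built into \eqref{eq:va-dcom}. From \eqref{eq:vaL} together with $(g,v^\bot)=0$ one gets $\|v^\bot(\alpha)\|^2=(\|v^\bot\|^2+\alpha^2\|g\|^2)/(1+\alpha^2\|g\|^2)$; since this is $\ge\|v^\bot\|^2>0$, the $L^\bot$-part of $v(\alpha)$ is nonzero, so $v(\alpha)\in S\backslash L$, and the same identity (after cross-multiplying and using $\|v^\bot\|\le\|v\|=1$) gives $\|v^\bot(\alpha)\|\ge\|v^\bot\|$. The estimate $\|v^L(\alpha)\|\le\|v^L\|$ is immediate from \eqref{eq:vaL} because $\sqrt{1+\alpha^2\|g\|^2}\ge1$, while $\|v^L\|<1$ and $\|v^\bot\|>0$ follow from $\|v^L\|^2+\|v^\bot\|^2=\|v\|^2=1$ and $v^\bot\ne0$.

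Next I would derive a closed form for the displacement. Put $c=1/\sqrt{1+\alpha^2\|g\|^2}$, so that $v(\alpha)=cv-c\alpha g$ and $v(\alpha)-v=(c-1)v-c\alpha g$. Since $(g,v)=0$, the Pythagorean theorem gives $\|v(\alpha)-v\|^2=(c-1)^2+c^2\alpha^2\|g\|^2$, and the observation $c^2\alpha^2\|g\|^2=1-c^2$ collapses this to $\|v(\alpha)-v\|^2=2(1-c)$. To verify \eqref{eq:va-v}, substitute $t=\alpha\|g\|>0$ and $s=\sqrt{1+t^2}>1$, so $c=1/s$ and $\|v(\alpha)-v\|^2=2(s-1)/s$, while the squares of the two sides of \eqref{eq:va-v} are $t^2/(1+t^2)=(s-1)(s+1)/s^2$ and $t^2=(s-1)(s+1)$. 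Dividing each desired inequality by $s-1>0$ reduces it to the elementary fact $s>1$ (the left one to $(s+1)/s<2$, the right one to $2<s^2+s$). The limit then follows by squeezing: \eqref{eq:va-v} gives $c<\|v(\alpha)-v\|/(\alpha\|g\|)<1$, and $c\to1$ as $\alpha\to0^+$.

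I do not expect a genuine obstacle here; the only mildly clever step is recognizing the cancellation $\|v(\alpha)-v\|^2=2(1-c)$ and then introducing $s=\sqrt{1+t^2}$, which turns the two-sided bound in \eqref{eq:va-v} into a pair of one-line inequalities. The rest is routine bookkeeping with the orthogonal splitting $X=L\oplus L^\bot$ and the fact that $g$ is orthogonal to $[L,v]$.
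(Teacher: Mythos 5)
Your proof is correct and follows the same route the paper intends: a direct calculation based on the orthogonality $g\in[L,v]^\bot$ (so $(g,v)=(g,v^L)=(g,v^\bot)=0$) and the Pythagorean identity, which is exactly the "direct calculation" the paper defers to Lemmas~3.2--3.4 of \cite{LXY2021CMS}. The closed form $\|v(\alpha)-v\|^2=2(1-c)$ with $c=1/\sqrt{1+\alpha^2\|g\|^2}$ is a clean way to package the two-sided bound \eqref{eq:va-v} and the limit, and all the elementary inequalities you reduce to ($s>1$, $s^2+s>2$) check out.
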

\begin{lemma}[\cite{LXY2021CMS}]\label{lem:pv-tv}
	Let $p$ be a local peak selection of $E$ w.r.t. $L$ at $\bar{v}\in S\backslash L$. For $\forall\, v\in S$ near $\bar{v}$, denote $p(v)=t_vv+w_v^L$ with $t_v\geq0$ and $w_v^L\in L$. If $p$ is continuous at $\bar{v}$, then the mappings $v\mapsto t_v$ and $v\mapsto w_v^L$ are continuous at $\bar{v}$.
\end{lemma}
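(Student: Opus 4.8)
The plan is to reduce the statement to the continuity of the orthogonal projection onto $L^\bot$ together with the non-degeneracy of $\bar v$ off the base space $L$. Since $L$ is a closed subspace, $S\backslash L$ is relatively open in $S$, so after shrinking the neighborhood $N_{\bar v}$ from Definition~\ref{def:pv} if necessary, I may assume every $v$ under consideration lies in $S\backslash L$, and then $p(v)$, $t_v$, $w_v^L$ are all well defined. Writing $v=v^\bot+v^L$ with $v^\bot\in L^\bot\backslash\{0\}$ and $v^L\in L$, and $p(v)=t_vv+w_v^L$ with $t_v\geq0$, $w_v^L\in L$, the first observation is that
\[ p(v)=t_vv^\bot+\bigl(t_vv^L+w_v^L\bigr), \]
so that $t_vv^\bot$ is exactly the $L^\bot$-component of $p(v)$. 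Because $v^\bot\neq0$, this pins down $t_v$ uniquely, and then $w_v^L=p(v)-t_vv$; this is also what makes the coordinates $(t_v,w_v^L)$ legitimate in the first place.

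Next I would let $Q:X\to L^\bot$ be the orthogonal projection, which is bounded and hence continuous, and apply it to the displayed identity to get $Q\bigl(p(v)\bigr)=t_vv^\bot$. Since $p$ is continuous at $\bar v$ by hypothesis and $v\mapsto v^\bot=Q(v)$ is continuous, letting $v\to\bar v$ yields $t_vv^\bot=Q(p(v))\to Q(p(\bar v))=t_{\bar v}\bar v^\bot$ and $v^\bot\to\bar v^\bot$. Taking norms and using $t_v\geq0$, this becomes $t_v\|v^\bot\|\to t_{\bar v}\|\bar v^\bot\|$ and $\|v^\bot\|\to\|\bar v^\bot\|$.

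The only point that requires care is the division by $\|v^\bot\|$, and this is precisely where the hypothesis $\bar v\in S\backslash L$ enters: it forces $\bar v^\bot\neq0$, i.e. $\|\bar v^\bot\|>0$, so $\|v^\bot\|$ stays bounded away from $0$ for $v$ near $\bar v$. Hence
\[ t_v=\frac{\|Q(p(v))\|}{\|v^\bot\|}\longrightarrow\frac{t_{\bar v}\|\bar v^\bot\|}{\|\bar v^\bot\|}=t_{\bar v}\qquad\mbox{as }v\to\bar v, \]
which proves that $v\mapsto t_v$ is continuous at $\bar v$. Finally, from $w_v^L=p(v)-t_vv$, the continuity of $p$ at $\bar v$, and the continuity of $v\mapsto t_v$ just established, I get $w_v^L\to p(\bar v)-t_{\bar v}\bar v=w_{\bar v}^L$, so $v\mapsto w_v^L$ is continuous at $\bar v$ as well.

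I do not anticipate a genuine obstacle: the argument is a soft one built entirely from continuity of linear projections. The single delicate ingredient — the non-vanishing of $\bar v^\bot$, needed both for the uniqueness of the half-space representation and for passing the ratio to the limit — is handed to us directly by the assumption that $\bar v$ is not in the base space $L$.
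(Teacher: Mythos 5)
Your argument is correct and is essentially the standard proof that the paper defers to (Lemmas 3.2--3.4 of \cite{LXY2021CMS}): project $p(v)=t_vv^\bot+(t_vv^L+w_v^L)$ onto $L^\bot$, recover $t_v=\|Q(p(v))\|/\|v^\bot\|$, and use that $\|\bar v^\bot\|>0$ because $\bar v\notin L$. You also correctly handle the two delicate points — uniqueness of the representation $(t_v,w_v^L)$ and the denominator staying bounded away from zero — so nothing is missing.
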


In view of Lemmas~\ref{lem:orth}-\ref{lem:pv-tv} and in accordance with the lines in the proof of Lemmas~3.6-3.7 in \cite{LXY2021CMS}, one can obtain the following result, which is actually an improved version of Lemma~2.1 in \cite{LZ2001SISC} and Lemma~2.13 in \cite{Y2013MC} since the domain of the local peak selection is changed from $S\cap L^\bot$ to $S$.
\begin{lemma}[\cite{LXY2021CMS}]\label{lem:armijo}
	Suppose $E\in C^1(X,\R)$ and let $p(v)=t_vv+w_v^L$ be a local peak selection of $E$ w.r.t. $L$ at $v\in S\backslash L$, where $t_v\geq0$ and $w_v^L\in L$. If there hold (i) $p$ is continuous at $v$; (ii) $t_v>0$; and (iii) $g=\nabla E(p(v))\neq0$, then for any $\sigma\in(0,1)$, there exists $\alpha^A>0$ such that
	\[ E(p(v(\alpha)))< E(p(v))-\sigma\alpha t_v\|g\|^2,\quad \forall \,\alpha\in(0,\alpha^A). \]
\end{lemma}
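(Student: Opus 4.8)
The plan is to establish the one-sided asymptotic bound
\[
E(p(v(\alpha)))\le E(p(v))-\alpha t_v\|g\|^2+o(\alpha)\qquad\text{as }\alpha\to0^+,
\]
from which the assertion follows at once: since $t_v>0$ and $g\neq0$ we have $(1-\sigma)t_v\|g\|^2>0$, so $-\alpha t_v\|g\|^2+o(\alpha)=-\sigma\alpha t_v\|g\|^2-(1-\sigma)\alpha t_v\|g\|^2+o(\alpha)<-\sigma\alpha t_v\|g\|^2$ for all $\alpha$ in a suitable interval $(0,\alpha^A)$. First I would record the preliminaries: by Lemma~\ref{lem:vs}, $v(\alpha)\in S\backslash L$ and $\|v(\alpha)-v\|<\alpha\|g\|$ for every $\alpha>0$, so $v(\alpha)\to v$; hence for $\alpha$ small $v(\alpha)$ lies in the domain of the local peak selection, $p(v(\alpha))\to p(v)$ by continuity of $p$ at $v$, and, writing $p(v(\alpha))=t_{v(\alpha)}v(\alpha)+w^L_{v(\alpha)}$ with $t_{v(\alpha)}\ge0$, $w^L_{v(\alpha)}\in L$, Lemma~\ref{lem:pv-tv} gives $t_{v(\alpha)}\to t_v$ and $w^L_{v(\alpha)}\to w^L_v$; in particular $t_{v(\alpha)}$ stays bounded.

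The crux is to compare $p(v)$ not with $p(v(\alpha))$ directly but with the auxiliary point $\pi_\alpha:=t_{v(\alpha)}v+w^L_{v(\alpha)}$. It lies in $[L,v]$ because $t_{v(\alpha)}\ge0$, it converges to $p(v)$, and it differs from $p(v(\alpha))$ only by the change of direction: $\pi_\alpha-p(v(\alpha))=t_{v(\alpha)}(v-v(\alpha))$, so $\|\pi_\alpha-p(v(\alpha))\|\le t_{v(\alpha)}\|v-v(\alpha)\|=O(\alpha)$ by \eqref{eq:va-v}. Since $p(v)$ is a local maximizer of $E$ on $[L,v]$ and $\pi_\alpha\in[L,v]$ tends to $p(v)$, we obtain $E(p(v))\ge E(\pi_\alpha)$ for all sufficiently small $\alpha$; it therefore suffices to show $E(\pi_\alpha)=E(p(v(\alpha)))+\alpha t_v\|g\|^2+o(\alpha)$.

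For the latter, a first-order expansion ($E\in C^1(X,\R)$, $\pi_\alpha,p(v(\alpha))\to p(v)$) gives
\[
E(\pi_\alpha)-E(p(v(\alpha)))=\langle E'(p(v(\alpha))),\,\pi_\alpha-p(v(\alpha))\rangle+o(\|\pi_\alpha-p(v(\alpha))\|),
\]
where the remainder is $o(\alpha)$ by the bound above. Lemma~\ref{lem:orth} at $v(\alpha)$ says $E'(p(v(\alpha)))$ annihilates the half-space $[L,v(\alpha)]$, which contains $L$, $v(\alpha)$ and hence also $v^\bot(\alpha)=v(\alpha)-v^L(\alpha)$; in view of \eqref{eq:va-dcom}--\eqref{eq:vaL} this gives $\langle E'(p(v(\alpha))),v^L\rangle=0$ and $\langle E'(p(v(\alpha))),v^\bot\rangle=\alpha\langle E'(p(v(\alpha))),g\rangle$, so $\langle E'(p(v(\alpha))),v\rangle=\alpha\langle E'(p(v(\alpha))),g\rangle$ while $\langle E'(p(v(\alpha))),v(\alpha)\rangle=0$. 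Therefore $\langle E'(p(v(\alpha))),\pi_\alpha-p(v(\alpha))\rangle=t_{v(\alpha)}\langle E'(p(v(\alpha))),v-v(\alpha)\rangle=\alpha t_{v(\alpha)}\langle E'(p(v(\alpha))),g\rangle$. Since $E'$ is continuous and $p(v(\alpha))\to p(v)$, one has $\langle E'(p(v(\alpha))),g\rangle\to\langle E'(p(v)),g\rangle=(g,g)=\|g\|^2$, and combined with $t_{v(\alpha)}\to t_v$ this yields $\alpha t_{v(\alpha)}\langle E'(p(v(\alpha))),g\rangle=\alpha t_v\|g\|^2+o(\alpha)$, which closes the argument.

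The step I expect to be the main obstacle is the control of the first-order remainder. Expanding $E(p(v(\alpha)))$ directly about $p(v)$ would leave an error of order $o(\|p(v(\alpha))-p(v)\|)$, and under the present hypotheses --- only continuity, not Lipschitz continuity or differentiability, of the peak selection --- there is no way to bound $\|p(v(\alpha))-p(v)\|$ by $O(\alpha)$ rather than merely $o(1)$, so the error cannot be shown to be $o(\alpha)$. Routing the comparison through the competitor $\pi_\alpha\in[L,v]$, for which $\|\pi_\alpha-p(v(\alpha))\|=O(\alpha)$, and expanding about $p(v(\alpha))$ instead, is precisely what rescues the estimate: the orthogonality at $v(\alpha)$ collapses the inner product onto the $g$-direction, which carries a factor $\alpha$, so the remainder is genuinely $o(\alpha)$. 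The remaining verifications are routine bookkeeping: that $v(\alpha)\in S\backslash L$ and lies in the neighborhood $N_v$ on which the local peak selection is defined for all small $\alpha$ (so that Lemma~\ref{lem:orth} applies at $v(\alpha)$), and that $\pi_\alpha$ eventually enters the neighborhood on which $p(v)$ is a local maximizer of $E$ over $[L,v]$.
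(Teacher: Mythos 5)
Your proof is correct and follows essentially the same route as the argument this lemma is cited from (Lemmas~3.6--3.7 of \cite{LXY2021CMS}): the competitor $\pi_\alpha=t_{v(\alpha)}v+w^L_{v(\alpha)}\in[L,v]$, the local maximality of $p(v)$ on $[L,v]$ giving $E(p(v))\geq E(\pi_\alpha)$, and a first-order expansion along the segment of length $O(\alpha)$ joining $\pi_\alpha$ to $p(v(\alpha))$ are exactly the standard ingredients. The only (harmless) variation is that you linearize at the moving point $p(v(\alpha))$ and invoke the orthogonality relations of Lemma~\ref{lem:orth} at $v(\alpha)$ to collapse the inner product onto the $g$-direction, whereas the reference linearizes via the mean value theorem and uses $\langle E'(p(v)),v\rangle=0$ together with $\langle E'(p(v)),g\rangle=\|g\|^2$ at the fixed limit point; both reduce the remainder to $o(\alpha)$ by continuity of $E'$ near $p(v)$, so the conclusions coincide.
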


Thanks to Lemma~\ref{lem:armijo}, the following result can be obtained by imitating the lines of the proof of Theorem~2.1 in \cite{LZ2001SISC} with its proof omitted here for simplicity.
\begin{theorem}\label{thm:lmt0}
	If $E\in C^1(X,\R)$ has a local peak selection w.r.t. $L$ at $v_*\in S\backslash L$, denoted by $p(v_*)=t_{v_*}v_*+w_{v_*}^L$, satisfying (i) $p$ is continuous at $v_*$; (ii) $t_{v_*}>0$; and (iii) $v_*$ is a local minimizer of $E(p(v))$ on $S\backslash L$, then $p(v_*)\notin L$ is a critical point of $E$.
\end{theorem}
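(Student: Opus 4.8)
The plan is to prove the equivalent statement $g:=\nabla E(p(v_*))=0$, from which $E'(p(v_*))=0$ follows at once. First I would record what the peak‑selection structure already gives for free: since $p$ is a local peak selection of $E$ w.r.t.\ $L$ at $v_*\in S\backslash L$, Lemma~\ref{lem:orth} yields $\langle E'(p(v_*)),w\rangle=0$ for every $w\in[L,v_*]$, i.e.\ $g\in[L,v_*]^\bot$; thus $g$ is automatically orthogonal to $v_*$ and to $L$, and the whole burden is to exclude $g\neq0$. I would do this by contradiction, pitting the strict‑decrease estimate of the Armijo‑type Lemma~\ref{lem:armijo} against the local minimality hypothesis~(iii).

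Assume $g\neq0$. Then hypotheses (i) and (ii) together with $g\neq0$ are precisely the hypotheses of Lemma~\ref{lem:armijo}; picking any $\sigma\in(0,1)$ produces $\alpha^A>0$ such that
\[ E(p(v_*(\alpha)))<E(p(v_*))-\sigma\alpha t_{v_*}\|g\|^2\qquad\text{for all }\alpha\in(0,\alpha^A), \]
where $v_*(\alpha)=(v_*-\alpha g)/\|v_*-\alpha g\|$ is the curve from \eqref{eq:va-dcom}. Next I would appeal to Lemma~\ref{lem:vs}: it guarantees $v_*(\alpha)\in S\backslash L$ for every $\alpha>0$ and $\|v_*(\alpha)-v_*\|<\alpha\|g\|\to0$ as $\alpha\to0^+$, so for all sufficiently small $\alpha$ the point $v_*(\alpha)$ lies in $N_{v_*}\cap S$ --- the set on which the local peak selection $p$ is defined --- and also inside any prescribed $(S\backslash L)$‑neighbourhood of $v_*$. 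By hypothesis~(iii) there is therefore some $\alpha_0\in(0,\alpha^A)$ with $E(p(v_*(\alpha)))\geq E(p(v_*))$ for all $\alpha\in(0,\alpha_0)$, contradicting the displayed strict inequality since $\sigma\alpha t_{v_*}\|g\|^2>0$. Hence $g=0$, i.e.\ $p(v_*)$ is a critical point of $E$.

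It remains only to note $p(v_*)\notin L$: writing $p(v_*)=t_{v_*}v_*+w_{v_*}^L$ with $t_{v_*}>0$ and $w_{v_*}^L\in L$, if $p(v_*)$ were in $L$ then $v_*=t_{v_*}^{-1}\bigl(p(v_*)-w_{v_*}^L\bigr)\in L$, contradicting $v_*\in S\backslash L$. The single genuinely delicate point is checking that the test curve $v_*(\alpha)$ is admissible for (iii) --- that it stays in $S\backslash L$, remains where $p$ is an honest peak selection, and converges to $v_*$ within $S\backslash L$ --- and this is exactly what Lemma~\ref{lem:vs} and Definition~\ref{def:pv} supply; the rest is a direct assembly of Lemmas~\ref{lem:orth}, \ref{lem:vs} and~\ref{lem:armijo}, mirroring the proof of Theorem~2.1 in \cite{LZ2001SISC}.
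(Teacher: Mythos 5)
Your argument is correct and is exactly the route the paper intends: the paper omits the proof, noting only that it follows from Lemma~\ref{lem:armijo} by imitating Theorem~2.1 of \cite{LZ2001SISC}, which is precisely your contradiction argument pitting the Armijo-type strict decrease along $v_*(\alpha)$ against the local minimality in (iii), with Lemma~\ref{lem:vs} supplying admissibility of the test curve. The final observation that $t_{v_*}>0$ and $v_*\notin L$ force $p(v_*)\notin L$ is also the standard one, so there is nothing to add.
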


The following concept of compactness is needed to establish an existence result.
\begin{definition}[\cite{Rabinowitz1986}]
	A functional $E\in C^1(X,\R)$ is said to satisfy Palais--Smale (PS) condition if every sequence $\{w_j\}\subset X$ such that $\{E(w_j)\}$ is bounded and $E'(w_j)\to0$ in $X^*$ has a convergent subsequence.
\end{definition}

For a given $v_0=v_0^\bot+v_0^L\in S\backslash L$ with $v_0^\bot\in L^\bot\backslash\{0\}$ and $v_0^L\in L$, define
\begin{equation}\label{eq:V0}
	\mathcal{V}_0:=\{v=v^\bot+\tau v_0^L\in S:v^\bot\in L^\bot,0\leq\tau\leq1\}\subset S\backslash L.
\end{equation}
As illustrated below in Lemma~\ref{lem:mono}, the sequence $\{v_k\}$ generated by the LMM algorithm with initial data $v_0$ is contained in $\mathcal{V}_0$. Actually, it will be seen that the domain of the peak selection $p$ can be limited to be the closed subset $\mathcal{V}_0$ instead of $S$.

Similar to Theorem~2.2 in \cite{LZ2001SISC}, by applying the Ekeland's variational principle and Lemma~\ref{lem:armijo}, we have the existence result with the proof given in Appendix~\ref{app:prf-lmt}, which is really an improvement for that in \cite{LZ2001SISC}. In fact, a continuous peak selection $p$ defined on $\mathcal{V}_0$ instead of $S\cap L^\bot$, in general, is no longer a homeomorphism.
\begin{theorem}\label{thm:lmt}
	Let $E\in C^1(X,\R)$ satisfy the (PS) condition. If $E$ has a peak selection w.r.t. $L$, denoted by $p(v)=t_vv+w_v^L$ with $v\in \mathcal{V}_0$ and $w_v^L\in L$, satisfying (i) $p$ is continuous on $\mathcal{V}_0$; (ii) $t_v\geq\delta$ for some $\delta>0$ and $\forall\,v\in \mathcal{V}_0$; and (iii) $\inf_{v\in \mathcal{V}_0}E(p(v))>-\infty$, then there exists $v_*\in \mathcal{V}_0$ such that $p(v_*)\notin L$ is a critical point and
	\[ E(p(v_*))=\inf_{v\in \mathcal{V}_0}E(p(v)). \]
\end{theorem}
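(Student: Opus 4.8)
The plan is to follow the variational strategy of Theorem~2.2 in \cite{LZ2001SISC}: produce a minimizing sequence for $J(v):=E(p(v))$ on $\mathcal{V}_0$ that is almost a global minimizer in the sense of Ekeland's variational principle, deduce that the associated Riesz gradients tend to zero, invoke the (PS) condition to extract a convergent subsequence of $\{p(v_k)\}$, and finally pull the resulting limit back to a point $v_*\in\mathcal{V}_0$ through the continuity of $p$.

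First I would verify that $(\mathcal{V}_0,\|\cdot\|)$ is a complete metric space and that $J$ is continuous and bounded below on it. For any $v=v^\bot+\tau v_0^L\in\mathcal{V}_0$ the orthogonality of $v^\bot\in L^\bot$ and $\tau v_0^L\in L$ gives $\|v^\bot\|^2=1-\tau^2\|v_0^L\|^2\ge 1-\|v_0^L\|^2=\|v_0^\bot\|^2>0$, so $\mathcal{V}_0$ stays uniformly away from $L$ and is a closed, hence complete, subset of the unit sphere $S$; continuity of $J$ follows from hypothesis (i) together with $E\in C^1(X,\R)$, and boundedness below is hypothesis (iii). Write $c:=\inf_{v\in\mathcal{V}_0}J(v)>-\infty$. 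For each $k\in\N$, Ekeland's variational principle then supplies $v_k\in\mathcal{V}_0$ with $J(v_k)\le c+\frac{1}{k}$ and $J(w)\ge J(v_k)-\frac{1}{k}\|w-v_k\|$ for all $w\in\mathcal{V}_0$.

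Next I would show that $g_k:=\nabla E(p(v_k))\to 0$. When $g_k\neq 0$, Lemma~\ref{lem:armijo} applies at $v_k$ (its hypotheses hold since $p$ is continuous at $v_k$, $t_{v_k}\ge\delta>0$ and $g_k\neq 0$; moreover the descent path $v_k(\alpha)$ of \eqref{eq:va-dcom} lies in $\mathcal{V}_0$ because $g_k\in L^\bot$ by Lemma~\ref{lem:orth} and $\|v_k-\alpha g_k\|\ge 1$, so the lemma may be used with $p$ restricted to $\mathcal{V}_0$), and with $\sigma=\frac{1}{2}$ it furnishes some $\alpha^A_k>0$ with $J(v_k(\alpha))<J(v_k)-\frac{1}{2}\alpha t_{v_k}\|g_k\|^2$ for $\alpha\in(0,\alpha^A_k)$. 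Substituting $w=v_k(\alpha)$ into the Ekeland inequality and using $\|v_k(\alpha)-v_k\|<\alpha\|g_k\|$ from Lemma~\ref{lem:vs} yields $\frac{1}{2}\alpha t_{v_k}\|g_k\|^2<\frac{1}{k}\alpha\|g_k\|$, whence $\|g_k\|<\frac{2}{k\,t_{v_k}}\le\frac{2}{k\delta}$; since this bound is trivial when $g_k=0$, we conclude $\|g_k\|\to 0$. As the Riesz map is an isometry, $\|E'(p(v_k))\|_{X^*}=\|g_k\|\to 0$, while $E(p(v_k))=J(v_k)\to c$, so the (PS) condition produces a subsequence (not relabelled) with $p(v_k)\to u_*$ in $X$, and continuity of $E'$ gives $E'(u_*)=0$ and $E(u_*)=c$.

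The remaining step, which I expect to be the main obstacle, is to exhibit $v_*\in\mathcal{V}_0$ with $p(v_*)=u_*$: because $\mathcal{V}_0$ is infinite-dimensional and $p$ need not be a homeomorphism, $\{v_k\}$ cannot be forced to converge by abstract compactness, so one must exploit the explicit structure of $\mathcal{V}_0$. Writing $v_k=v_k^\bot+\tau_k v_0^L$ with $\tau_k\in[0,1]$ and $p(v_k)=t_{v_k}v_k+w_{v_k}^L$, the $L^\bot$-component of $p(v_k)$ is $t_{v_k}v_k^\bot$, so $t_{v_k}v_k^\bot\to u_*^\bot$ (the $L^\bot$-part of $u_*$) and $\|u_*^\bot\|=\lim t_{v_k}\|v_k^\bot\|\ge\delta\|v_0^\bot\|>0$, hence $u_*\notin L$. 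Passing to a further subsequence with $\tau_k\to\tau_*\in[0,1]$ we obtain $\|v_k^\bot\|^2=1-\tau_k^2\|v_0^L\|^2\to m^2>0$, so $t_{v_k}\|v_k^\bot\|\to\|u_*^\bot\|$ and $\|v_k^\bot\|\to m$ give $t_{v_k}\to\|u_*^\bot\|/m=:t_*\ge\delta$; consequently $v_k^\bot=(t_{v_k}v_k^\bot)/t_{v_k}\to u_*^\bot/t_*$ and $v_k^L=\tau_k v_0^L\to\tau_* v_0^L$, i.e. $v_k\to v_*:=u_*^\bot/t_*+\tau_* v_0^L$, which belongs to $\mathcal{V}_0$ since $\|v_*\|=\lim\|v_k\|=1$, $u_*^\bot/t_*\in L^\bot$ and $\tau_*\in[0,1]$. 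Continuity of $p$ on $\mathcal{V}_0$ then forces $p(v_*)=\lim p(v_k)=u_*$, so $p(v_*)\notin L$ is a critical point of $E$ with $E(p(v_*))=c=\inf_{v\in\mathcal{V}_0}E(p(v))$, which is the assertion.
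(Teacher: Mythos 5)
Your proposal is correct and follows essentially the same route as the paper's proof in Appendix~\ref{app:prf-lmt}: Ekeland's variational principle on $\mathcal{V}_0$ combined with Lemma~\ref{lem:armijo} and \eqref{eq:va-v} to force $\|\nabla E(p(v_k))\|\to 0$, the (PS) condition to extract $p(v_k)\to u_*$, and then the explicit structure $v_k=v_k^\bot+\tau_k v_0^L$ to recover $v_*\in\mathcal{V}_0$ with $p(v_*)=u_*$. The only cosmetic difference is that you extract the convergent subsequence of $\{\tau_k\}$ first and deduce $\|v_k^\bot\|\to m$ from it, whereas the paper extracts subsequences for $\|v_{n_i}^\bot\|$ and $\tau_{n_i}$ in the opposite order; this changes nothing of substance.
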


Under assumptions of Theorem~\ref{thm:lmt}, there is a saddle point, as an unstable critical point of $E$, characterized as a local solution to the constrained local minimization problem
\begin{equation}\label{eq:LMM2}
	\min_{v\in \mathcal{V}_0}E(p(v))\quad \mbox{or}\quad\min_{w\in\mathcal{M}}E(w),
\end{equation}
with $\mathcal{M}=\{p(v):v\in \mathcal{V}_0\}$ serving as the `solution submanifold'. Thus, some descent algorithms can work for numerically finding saddle points of the functional $E$ determined by \eqref{eq:LMM2} in a stable way. Consequently, Theorems~\ref{thm:lmt0} and \ref{thm:lmt} provide mathematical justifications for the LMM.

Last but not least, the LMMs are capable of selectively finding the saddle points with given MIs. Following the lines of Theorem~2.4 in \cite{Zhou2005MC}, we can prove that, under some conditions, the MI of a nondegenerate critical point $u_*=p(v_*)\notin L$ characterized by Theorem~\ref{thm:lmt0} or \ref{thm:lmt} is given as
\[ \mathrm{MI}(u_*)=\dim(L)+1. \]
Thus, $L=\{0\}$, as the simplest case, usually leads to a saddle point with $\mathrm{MI}=1$. In general, to find a saddle point with a given $\mathrm{MI}=n>1$ by the LMMs, the $(n-1)$-dimensional support space $L$ needs to be prescribed. In our numerical experiments, $L$ is simply spanned by some of previously found critical points.

\subsection{Local minimax algorithm}\label{subsec:lma}

To numerically find multiple saddle points in a stable way, traditional LMMs solve the constrained local minimization problem \eqref{eq:LMM2} via the following iterative scheme
\[ v_{k+1}=v_k(\alpha_k):=\frac{v_k-\alpha_kg_k}{\|v_k-\alpha_kg_k\|},\quad w_{k+1}=p(v_{k+1}),\quad k=0,1,\ldots, \]
where $v_k\in S\backslash L$, $\alpha_k>0$ is called a step-size and $g_k=\nabla E(w_k)$ denotes the gradient of $E$ at $w_k=p(v_k)$. A generalized framework of the LMM algorithm is outlined in Algorithm~\ref{alg:lmm} and we refer to \cite{LZ2001SISC,LZ2002SISC,XYZ2012SISC} for more details.

\begin{algorithm}[!ht]
	\caption{Generalized framework of the algorithm for the LMM \cite{LZ2001SISC,LZ2002SISC,XYZ2012SISC}.
	}\normalsize
	\label{alg:lmm}
	\begin{enumerate}[\bf Step 1.]
		\item Let the support space $L$ be spanned by some previously found critical points of $E$, say $u_1,u_2,\ldots,u_{n-1}\in X$, where $u_{n-1}$ is assumed to have the highest energy functional value. The initial ascent direction $v_0\in S\backslash L$ at $u_{n-1}$ is given. Set $t_{-1}=1$, $w_{-1}^L=u_{n-1}$ and $k:=0$. Repeat {\bf Steps~2-4} until the stopping criterion is satisfied (e.g., $\|\nabla E(w_k)\|\leq \varepsilon_{\mathrm{tol}}$ for a given tolerance $0<\varepsilon_{\mathrm{tol}}\ll 1$), then output $u_n=w_k$.
		\item Using the initial guess $w=t_{k-1}v_k+w_{k-1}^L\in[L,v_k]$, solve for
		\[ w_k=p(v_k)\equiv t_kv_k+w_k^L=\arg\max_{w\in[L, v_k]}E(w). \]
		\item Compute the gradient $g_k=\nabla E(w_k)\in X$ by solving the linear subproblem
		\begin{equation}\label{eq:gk}
			(g_k,\phi)=\langle E'(w_k),\phi\rangle,\quad \forall\,\phi\in X.
		\end{equation}
		\item Choose a suitable step-size $\alpha_k>0$ to update
		\[ v_{k+1}=v_k(\alpha_k)=\frac{v_k-\alpha_k g_k}{\|v_k-\alpha_k g_k\|}.\]
		Set $k:=k+1$ and go to {\bf Step~2}.
	\end{enumerate}
\end{algorithm}

A significant behavior of the iterative sequence is presented in the following lemma and can be proved using the expression $v_{k+1}=v_k(\alpha_k)$ and \eqref{eq:va-dcom}-\eqref{eq:vaL}, and the details can be found in Lemma~2.3 in \cite{XYZ2012SISC}.
\begin{lemma}[\cite{XYZ2012SISC}]\label{lem:mono}
	Let $\{v_k\}$ be a sequence generated by  Algorithm~\ref{alg:lmm} with $v_0\in S\backslash L$. Denote $v_k=v_k^\bot+v_k^L$ with $v_k^\bot\in L^\bot$ and $v_k^L\in L$, $k=0,1,\ldots$,  then $\|v_0^\bot\|\leq\|v_k^\bot\|\leq1$ and $v_k^L=\tau_kv_0^L$ hold for $0<\tau_{k+1}\leq\tau_k\leq1$, $k=0,1,\ldots$.
\end{lemma}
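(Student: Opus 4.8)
The plan is to argue by induction on $k$, the engine being the explicit orthogonal decomposition \eqref{eq:va-dcom}--\eqref{eq:vaL} of the update $v_{k+1}=v_k(\alpha_k)$ together with Lemmas~\ref{lem:orth} and \ref{lem:vs}. For $k=0$ all three assertions are trivial: $\|v_0^\bot\|\le\|v_0^\bot\|$, and $\|v_0^\bot\|\le 1$ holds because $\|v_0\|=1$, while $v_0^L=\tau_0 v_0^L$ with $\tau_0:=1$.

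For the inductive step, assume $v_k=v_k^\bot+v_k^L\in S\backslash L$, that $\|v_0^\bot\|\le\|v_k^\bot\|\le 1$, and that $v_k^L=\tau_k v_0^L$ for some $0<\tau_k\le 1$. The first thing I would check is that the gradient $g_k=\nabla E(w_k)$ lies in $L^\bot$: since $v_k\in S\backslash L$ and $w_k=p(v_k)$ is the value of a local peak selection, Lemma~\ref{lem:orth} gives $\langle E'(w_k),w\rangle=0$ for every $w\in[L,v_k]$, in particular for every $w\in L$, so $g_k\perp L$. Hence in the vector $v_k-\alpha_k g_k$ the component lying in $L$ is exactly $v_k^L$, and applying \eqref{eq:va-dcom}--\eqref{eq:vaL} with $v=v_k$ and $g=g_k$ produces
\[ v_{k+1}^L=\frac{v_k^L}{\sqrt{1+\alpha_k^2\|g_k\|^2}},\qquad v_{k+1}^\bot=\frac{v_k^\bot-\alpha_k g_k}{\sqrt{1+\alpha_k^2\|g_k\|^2}}. \]

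From the first identity and the inductive hypothesis, $v_{k+1}^L=\bigl(\tau_k/\sqrt{1+\alpha_k^2\|g_k\|^2}\bigr)v_0^L$, so putting $\tau_{k+1}:=\tau_k/\sqrt{1+\alpha_k^2\|g_k\|^2}$ gives $v_{k+1}^L=\tau_{k+1}v_0^L$ with $0<\tau_{k+1}\le\tau_k\le 1$ (the inequality being strict whenever $g_k\ne 0$, which is the case before the stopping criterion $\|\nabla E(w_k)\|\le\varepsilon_{\mathrm{tol}}$ is met). For the orthogonal part, Lemma~\ref{lem:vs} applied with $v=v_k$ — legitimate since $v_k\in S\backslash L$, so $v_k^\bot\ne 0$ — yields $v_{k+1}=v_k(\alpha_k)\in S\backslash L$ and $\|v_{k+1}^\bot\|\ge\|v_k^\bot\|$; combined with $\|v_0^\bot\|\le\|v_k^\bot\|$ this gives $\|v_0^\bot\|\le\|v_{k+1}^\bot\|$, and $\|v_{k+1}^\bot\|\le 1$ is immediate from $v_{k+1}\in S$. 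This closes the induction. I do not anticipate a genuine obstacle here; the only points that need care are that $g_k\in L^\bot$ (so that the scalar formulas \eqref{eq:vaL} apply verbatim at every iteration) and that the iterates never leave $S\backslash L$ (so that $p(v_k)$, the decomposition, and Lemma~\ref{lem:vs} remain meaningful along the whole sequence) — both are delivered by Lemmas~\ref{lem:orth} and \ref{lem:vs}, after which the stated monotonicity is just the per-step estimate of Lemma~\ref{lem:vs} iterated in $k$.
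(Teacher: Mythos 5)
Your proof is correct and follows exactly the route the paper indicates for this lemma: iterate the orthogonal decomposition \eqref{eq:va-dcom}--\eqref{eq:vaL} of the normalized update, using Lemma~\ref{lem:orth} to place $g_k$ in $[L,v_k]^\bot$ and Lemma~\ref{lem:vs} for the monotonicity of $\|v_k^\bot\|$ and the invariance of $S\backslash L$. The paper itself only sketches this and defers the details to Lemma~2.3 of \cite{XYZ2012SISC}, so your write-up is simply a completed version of the same argument.
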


This implies that once an initial guess $v_0\in S\backslash L$ is used in Algorithm~\ref{alg:lmm}, we can focus our attention on the domain of a peak selection $p$ only on the closed subset $\mathcal{V}_0$ defined in \eqref{eq:V0}, which contains all possible $v_k$ that the algorithm may generate.

Next, we describe the following weaker version of the homeomorphism of $p$, which plays an essential role for the global convergence in Sect.~\ref{sec:nmlmm}. The proof is similar to that of Theorem~2.1 in \cite{XYZ2012SISC} and skipped here for brevity.
\begin{lemma}\label{lem:home}
	Suppose $E\in C^1(X,\R)$ and let $p$ be a peak selection of $E$ w.r.t. $L$ and $\{v_k\}$ be a sequence generated by Algorithm~\ref{alg:lmm} with $v_0\in S\backslash L$. Denote $w_k=p(v_k)=t_kv_k+w_k^L$ with $t_k\geq0$ and $w_k^L\in L$. Assume that (i) $p$ is continuous on $\mathcal{V}_0$ and (ii) $t_k\geq\delta$ for some $\delta>0$ and $\forall\,k=0,1,\ldots$ hold. If $\{w_k\}$ contains a subsequence $\{w_{k_i}\}$ converging to some $u_*\in X$, then the corresponding subsequence $\{v_{k_i}\}$ converges to some $v_*\in \mathcal{V}_0$ satisfying $u_*=p(v_*)$.
\end{lemma}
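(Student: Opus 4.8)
The plan is to exploit the decomposition $w_k = t_k v_k + w_k^L$ together with Lemma~\ref{lem:mono}, which confines the whole sequence to the closed subset $\mathcal{V}_0$ and in particular forces $v_k^L = \tau_k v_0^L$ with $\tau_k$ monotonically nonincreasing in $[0,1]$. First I would extract from the convergent subsequence $w_{k_i}\to u_*$ the scalars $t_{k_i}$ and the components $w_{k_i}^L$. Writing $w_{k_i}^L = \tau_{k_i} v_0^L + (\text{component of } w_{k_i}^L \text{ complementary to } \spn\{v_0^L\})$ is not quite right; instead, since $w_k^L\in L$ and $L$ is finite-dimensional, I would argue directly that on the subsequence both $\{t_{k_i}\}$ and $\{w_{k_i}^L\}$ are bounded: indeed $t_{k_i} = (w_{k_i}, v_{k_i})$-type relations or, more simply, $w_{k_i}^L$ is the orthogonal projection of $w_{k_i}$ onto $L$, hence $\|w_{k_i}^L\|\le\|w_{k_i}\|$ is bounded, and then $t_{k_i} v_{k_i} = w_{k_i} - w_{k_i}^L$ is bounded, giving $t_{k_i}=\|w_{k_i}-w_{k_i}^L\|$ bounded. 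Passing to a further subsequence (not relabelled) I get $t_{k_i}\to t_*$ and $w_{k_i}^L\to w_*^L\in L$ (closedness of the finite-dimensional $L$). By hypothesis (ii), $t_{k_i}\ge\delta$, so $t_*\ge\delta>0$.

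Next I would recover the limit of $\{v_{k_i}\}$. From $v_{k_i} = (w_{k_i} - w_{k_i}^L)/t_{k_i}$ and the convergences just established, $v_{k_i}\to v_* := (u_* - w_*^L)/t_*$, and since each $v_{k_i}\in S$ (unit sphere) and $t_{k_i} v_{k_i}=w_{k_i}-w_{k_i}^L$, continuity of the norm gives $\|v_*\|=1$, i.e.\ $v_*\in S$. Moreover $v_*^L = v_0^L\cdot\lim\tau_{k_i}$ lies in $\spn\{v_0^L\}$ with coefficient in $[0,1]$ by Lemma~\ref{lem:mono} and the continuity of the projection onto $L$, and $v_*^\bot\in L^\bot$ with $\|v_*^\bot\|\ge\|v_0^\bot\|>0$, so $v_*\in\mathcal{V}_0$ (in particular $v_*\notin L$). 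It remains to identify $u_* = p(v_*)$: by hypothesis (i), $p$ is continuous on $\mathcal{V}_0$, hence $p(v_{k_i})\to p(v_*)$; but $p(v_{k_i})=w_{k_i}\to u_*$, so by uniqueness of limits $u_* = p(v_*)$.

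The main obstacle I anticipate is the boundedness argument for $t_{k_i}$ and $w_{k_i}^L$ carried out purely from the convergence of $w_{k_i}$ — this is the only place where one must be careful that the decomposition $w_k = t_k v_k + w_k^L$ behaves well in the limit, since a priori $t_k$ could degenerate or the $L$-component could run off. The resolution is clean because $w_k^L$ is exactly the orthogonal projection $\mathrm{Proj}_L w_k$ (this follows from Lemma~\ref{lem:orth}, or simply from the defining decomposition and $v_k^\bot\perp L$), so both $w_k^L$ and $t_k v_k = w_k - w_k^L$ inherit boundedness from $\{w_{k_i}\}$; the lower bound $t_*\ge\delta$ from hypothesis (ii) then prevents any degeneracy and makes the formula $v_* = (u_*-w_*^L)/t_*$ legitimate. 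Everything else is continuity of finite-dimensional projections, closedness of $L$, and continuity of $p$, which are routine. Note this is strictly a "half" of a homeomorphism statement: no injectivity of $p$ is needed, only that sequential limits are preserved, which is precisely what the global convergence analysis in section~\ref{sec:nmlmm} will consume.
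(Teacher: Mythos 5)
Your overall plan (bound $t_{k_i}$ and $w_{k_i}^L$, pass to the limit in $w_{k_i}=t_{k_i}v_{k_i}+w_{k_i}^L$, then invoke continuity of $p$) is the right one and is essentially the computation the paper relies on (it defers to Theorem~2.1 of \cite{XYZ2012SISC}; the same argument appears in Appendix~A). But the step you yourself identify as the crux is justified incorrectly. You assert that $w_k^L$ is the orthogonal projection of $w_k$ onto $L$, ``from the defining decomposition and $v_k^\bot\perp L$''. In this paper the domain of the peak selection is $\mathcal{V}_0\subset S\backslash L$, \emph{not} $S\cap L^\bot$: the iterates $v_k=v_k^\bot+\tau_k v_0^L$ generally carry a nonzero $L$-component (this relaxation is precisely the point of \cite{XYZ2012SISC}). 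Hence $w_k=t_kv_k+w_k^L=t_kv_k^\bot+(t_k\tau_kv_0^L+w_k^L)$, so the projection of $w_k$ onto $L$ is $t_k\tau_kv_0^L+w_k^L$, not $w_k^L$; and Lemma~\ref{lem:orth} is a statement about $E'(p(v))$, not about this decomposition, so it cannot rescue the claim. The boundedness conclusion is still true, but one must argue through $L^\bot$ instead: the projection of $w_{k_i}$ onto $L^\bot$ equals $t_{k_i}v_{k_i}^\bot$ and converges to $u_*^\bot$, and Lemma~\ref{lem:mono} gives $\|v_{k_i}^\bot\|\geq\|v_0^\bot\|>0$, whence $t_{k_i}\leq\|w_{k_i}\|/\|v_0^\bot\|$ is bounded and then $w_{k_i}^L=w_{k_i}-t_{k_i}v_{k_i}$ is bounded.

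There is a second, smaller gap: after passing to a further (unrelabelled) subsequence to get $t_{k_i}\to t_*$ and $w_{k_i}^L\to w_*^L$, you have only proved that a sub-subsequence of $\{v_{k_i}\}$ converges, whereas the lemma asserts convergence of $\{v_{k_i}\}$ itself --- and the convergence proofs that consume this lemma genuinely need the full statement (they require $\alpha_k$ bounded below for \emph{all} large $k$, not merely along a sparse index set). The repair is to note that no further extraction is needed: by Lemma~\ref{lem:mono} the sequence $\tau_k$ is monotone, so $\tau_k\to\tau_*$ and $\|v_k^\bot\|=\sqrt{1-\tau_k^2\|v_0^L\|^2}\to a_*\geq\|v_0^\bot\|>0$ along the whole sequence; then $t_{k_i}=\|t_{k_i}v_{k_i}^\bot\|/\|v_{k_i}^\bot\|\to\|u_*^\bot\|/a_*$ (with $\|u_*^\bot\|\geq\delta a_*>0$ by assumption (ii)), $v_{k_i}^\bot\to a_*u_*^\bot/\|u_*^\bot\|$, and $v_{k_i}\to v_*:=a_*u_*^\bot/\|u_*^\bot\|+\tau_*v_0^L\in\mathcal{V}_0$ along the entire subsequence. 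With these two corrections the remainder of your argument (closedness of $\mathcal{V}_0$, continuity of $p$, uniqueness of limits) goes through.
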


It is worthwhile to point out that step-size search rules used in traditional LMMs include the optimal/exact step-size search rule \cite{LZ2001SISC}, the normalized Armijo-type step-size search rule \cite{LZ2002SISC,YZ2005SISC,XYZ2012SISC}, the normalized Glodstein-type step-size search rule \cite{LXY2021CMS}  and the normalized Wolfe--Powell-type step-size search rules \cite{LXY-NWPLMM}. And up to now all step-size search rules in traditional LMMs are monotone in the sense that the sequence $\{E(w_k)\}$ is monotonically decreasing. Actually this feature is vital for the convergence analysis in traditional LMMs; see \cite{LZ2002SISC,Z2017CAMC,XYZ2012SISC,LXY2021CMS,LXY-NWPLMM}. Since the work of this paper is closely related to the normalized Armijo-type step-size search rule, let us describe it a little more. In fact, if the normalized Armijo-type step-size search rule is employed in Algorithm~\ref{alg:lmm}, the step-size $\alpha_k$ is chosen by a backtracking strategy as \cite{LZ2002SISC,YZ2005SISC,XYZ2012SISC}
\begin{equation}\label{eq:ak-armijo}
	\alpha_k = \max\left\{\lambda\rho^m:\; m\in\N,\, E(p(v_k(\lambda\rho^m))) \leq E(p(v_k)) - \sigma\lambda\rho^mt_k\|g_k\|^2\right\},
\end{equation}
for $k=0,1,\ldots$, where $g_k=\nabla E(p(v_k))$ and $\sigma,\rho\in(0,1)$, $\lambda>0$ are given parameters.

To end this subsection, let us explore a key property associated with the normalized Armijo-type step-size search rule, which will be quite useful in the convergence analysis in Sect.~\ref{sec:nmlmm}. According to Lemma~\ref{lem:armijo}, it is reasonable to define {\em the largest normalized Armijo-type step-size} at $v\in \mathcal{V}_0$ under the assumptions in Lemma~\ref{lem:armijo} as
\begin{equation}\label{eq:maxAstep}
	\bar{\alpha}^A(v) := \sup\left\{\alpha>0: E(p(v(\alpha)))<E(p(v))-\sigma\alpha t_v\|g\|^2 \right\}.
\end{equation}
The following lemma states that $\bar{\alpha}^A(v)$ is uniformly away from zero when $v$ is close to some point $\bar{v}\in \mathcal{V}_0$ such that $p(\bar{v})$ is not a critical point. The proof is similar to that of Lemma~2.5 in \cite{LZ2002SISC} and omitted here for brevity.
\begin{lemma}\label{lem:s0}
	Suppose $E\in C^1(X,\R)$ and let $p(\bar{v})=t_{\bar{v}}\bar{v}+w_{\bar{v}}^L$ be a local peak selection of $E$ w.r.t. $L$ at $\bar{v}\in \mathcal{V}_0$. If (i) $p$ is continuous at $\bar{v}$; (ii) $t_{\bar{v}}>0$; and (iii) $E'(p(\bar{v}))\neq0$ hold, then there exist a neighborhood $N_{\bar{v}}$ of $\bar{v}$ and a constant $\underline{\alpha}>0$ such that $\bar{\alpha}^A(v)\geq\underline{\alpha}$, $\forall\,v\in N_{\bar{v}}\cap \mathcal{V}_0$.
\end{lemma}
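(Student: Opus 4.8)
The plan is to argue by contradiction, exploiting the lower bound on $\bar{\alpha}^A(v)$ guaranteed at the single point $\bar{v}$ by Lemma~\ref{lem:armijo}, together with the continuity of all quantities involved. First I would record the local structure near $\bar{v}$: since $p$ is continuous at $\bar{v}$ with $t_{\bar{v}}>0$, Lemma~\ref{lem:pv-tv} gives a neighborhood $N_{\bar{v}}$ of $\bar{v}$ on which $v\mapsto t_v$ and $v\mapsto w_v^L$ are continuous, and by shrinking $N_{\bar{v}}$ we may assume $t_v\geq t_{\bar{v}}/2>0$ and, by continuity of $E'\circ p$ and $E'(p(\bar{v}))\neq0$, that $\|g_v\|:=\|\nabla E(p(v))\|$ is bounded below by a positive constant and above by a finite constant on $N_{\bar{v}}\cap\mathcal{V}_0$. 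The goal is then to find a uniform $\underline{\alpha}>0$ such that the strict Armijo inequality $E(p(v(\alpha)))<E(p(v))-\sigma\alpha t_v\|g_v\|^2$ holds for all $\alpha\in(0,\underline{\alpha}]$ and all $v$ in a possibly smaller neighborhood.

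Next I would set up the contradiction. Suppose no such uniform $\underline{\alpha}$ exists. Then there is a sequence $v_j\to\bar{v}$ in $\mathcal{V}_0$ and a sequence $\alpha_j>0$ with $\alpha_j\to 0$ (this forcing is the crux: if the $\alpha_j$ stayed bounded away from $0$ one argues along a convergent subsequence that the limiting inequality is strict at $\bar{v}$, contradicting $\alpha_j$ being the threshold) such that $\bar{\alpha}^A(v_j)=\alpha_j$, meaning the Armijo inequality fails at $\alpha_j$ but holds on $(0,\alpha_j)$; by definition of the supremum and continuity of $\alpha\mapsto E(p(v_j(\alpha)))$ (which uses continuity of $p$ and of $E$, plus the continuity of $\alpha\mapsto v_j(\alpha)$ from \eqref{eq:va-dcom}), this yields the boundary equality
\[
 E(p(v_j(\alpha_j))) = E(p(v_j)) - \sigma\alpha_j t_{v_j}\|g_{v_j}\|^2 .
\]
Now I would Taylor-expand the left-hand side. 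Writing $p(v_j(\alpha_j)) = p(v_j) + o(1)$ as $\alpha_j\to 0$ (here using continuity of $p$ and Lemma~\ref{lem:vs}, which gives $\|v_j(\alpha_j)-v_j\|\sim\alpha_j\|g_{v_j}\|\to 0$), and using the mean value theorem for $E\in C^1$ together with $g_{v_j}=\nabla E(p(v_j))\in[L,v_j]^\bot$ (Lemma~\ref{lem:orth}) and the orthogonal decomposition \eqref{eq:va-dcom}--\eqref{eq:vaL}, the directional derivative term is
\[
 \langle E'(p(v_j)), p(v_j(\alpha_j))-p(v_j)\rangle = -\alpha_j t_{v_j}\|g_{v_j}\|^2 + o(\alpha_j),
\]
exactly as in the proof of Lemma~\ref{lem:armijo}. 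Substituting into the boundary equality gives $-\alpha_j t_{v_j}\|g_{v_j}\|^2 + o(\alpha_j) = -\sigma\alpha_j t_{v_j}\|g_{v_j}\|^2$; dividing by $\alpha_j>0$ and letting $j\to\infty$ (using the uniform positive lower bounds on $t_{v_j}$ and $\|g_{v_j}\|^2$ to keep the leading term nondegenerate) yields $1=\sigma$, contradicting $\sigma\in(0,1)$.

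The main obstacle is making the $o(\alpha_j)$ estimate \emph{uniform} in $v_j$: the Taylor remainder must be controlled by a single modulus of continuity valid on the whole neighborhood, not merely pointwise at $\bar{v}$. This is handled by combining the $C^1$ regularity of $E$ (so $E'$ is continuous, hence uniformly continuous on a compact neighborhood of $p(\bar{v})$) with the fact that, by Lemma~\ref{lem:pv-tv} and Lemma~\ref{lem:vs}, the points $p(v_j(\alpha_j))$ and $p(v_j)$ all lie in a fixed compact set near $p(\bar{v})$ and the increment $\|p(v_j(\alpha_j))-p(v_j)\|$ is comparable to $\alpha_j\|g_{v_j}\|$ up to a factor going to $1$; this is precisely the structure already exploited in the proofs of Lemma~2.5 of \cite{LZ2002SISC} and Lemma~\ref{lem:armijo}, so the argument is a neighborhood-uniform rerun of that reasoning. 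A minor secondary point is the dichotomy on $\{\alpha_j\}$: one must first rule out $\liminf\alpha_j>0$, which follows because on a subsequence with $\alpha_j\to\alpha_\infty>0$ and $v_j\to\bar v$, passing to the limit in the failed inequality at $\alpha_j$ contradicts the strict inequality supplied by Lemma~\ref{lem:armijo} at $\bar v$ for $\alpha<\bar\alpha^A(\bar v)$ — after noting $\alpha_\infty\le\bar\alpha^A(\bar v)$ cannot be arranged to be strictly interior without contradiction, so in fact $\alpha_j\to 0$ necessarily, and the expansion above closes the proof.
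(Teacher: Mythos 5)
Your overall plan (argue by contradiction and rerun the expansion behind Lemma~\ref{lem:armijo} uniformly on a neighborhood of $\bar v$) is the same strategy the paper intends, since it defers the proof to Lemma~2.5 of \cite{LZ2002SISC}. However, there is a genuine gap at the decisive quantitative step. You justify the uniform $o(\alpha_j)$ remainder in $E(p(v_j(\alpha_j)))-E(p(v_j))=\langle E'(p(v_j)),\,p(v_j(\alpha_j))-p(v_j)\rangle+o(\alpha_j)$ by asserting that $\|p(v_j(\alpha_j))-p(v_j)\|$ is ``comparable to $\alpha_j\|g_{v_j}\|$ up to a factor going to $1$.'' That is not available: Lemma~\ref{lem:vs} controls $\|v_j(\alpha_j)-v_j\|$, not the increment of $p$. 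Writing $p(v_j(\alpha_j))-p(v_j)=\bigl(t_{v_j(\alpha_j)}v_j(\alpha_j)-t_{v_j}v_j\bigr)+\bigl(w^L_{v_j(\alpha_j)}-w^L_{v_j}\bigr)$, the quantities $|t_{v_j(\alpha_j)}-t_{v_j}|$ and $\|w^L_{v_j(\alpha_j)}-w^L_{v_j}\|$ are only known to tend to $0$ by continuity of $p$ (Lemma~\ref{lem:pv-tv}); nothing makes them $O(\alpha_j)$. Hence the Fr\'echet remainder is only $o(\|p(v_j(\alpha_j))-p(v_j)\|)$, which may dominate $\alpha_j$ (e.g.\ if the increment behaves like $\sqrt{\alpha_j}$), so the two-sided expansion, and the subsequent division by $\alpha_j$ that produces $1=\sigma$, is not justified.

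The repair, which is exactly what the proofs of Lemma~\ref{lem:armijo} and of Lemma~2.5 in \cite{LZ2002SISC} do, is to replace the two-sided expansion by a one-sided bound obtained from the local maximality of $p(v_j)$ on $[L,v_j]$. Decompose $p(v_j(\alpha_j))=q_j-\alpha_j s_j g_j$ with $s_j:=t_{v_j(\alpha_j)}/\|v_j-\alpha_j g_j\|$ and $q_j:=s_j v_j+w^L_{v_j(\alpha_j)}\in[L,v_j]$. Since $q_j\to p(\bar v)$, maximality gives $E(q_j)\leq E(p(v_j))$ for $j$ large, and the mean value theorem is applied only along the segment from $q_j$ to $q_j-\alpha_j s_j g_j$, whose length $\alpha_j s_j\|g_j\|$ is explicitly $O(\alpha_j)$. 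This yields $E(p(v_j(\alpha_j)))\leq E(p(v_j))-\alpha_j s_j\langle E'(\xi_j),g_j\rangle$ with $\xi_j\to p(\bar v)$; combined with the failure of the Armijo inequality at $\alpha_j$ it gives $\sigma t_{v_j}\|g_j\|^2\geq s_j\langle E'(\xi_j),g_j\rangle\to t_{\bar v}\|\nabla E(p(\bar v))\|^2$, hence $\sigma\geq1$, the desired contradiction. A secondary point: your ``boundary equality'' at $\alpha_j=\bar\alpha^A(v_j)$ requires continuity of $\alpha\mapsto E(p(v_j(\alpha)))$ away from $\bar v$, which assumption (i) does not supply; it is cleaner, and sufficient, to take any $\alpha_j\in\bigl(\bar\alpha^A(v_j),2\bar\alpha^A(v_j)\bigr)$, where the failure of the strict inequality in \eqref{eq:maxAstep} gives the needed ``$\geq$'' directly, with $\alpha_j\to0$ automatic from the negation of the conclusion.
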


\subsection{BB method and nonmonotone globalization in the optimization theory}\label{subsec:bb}

In order to put forward our approach in Sect.~\ref{sec:nmlmm}-\ref{sec:gbblmm}, we now review ingenious ideas of the BB method and its nonmonotone globalization strategies used in the optimization theory. Consider an unconstrained minimization problem as
\begin{equation}\label{eq:minf}
	\min_{\x\in\R^d} f(\x),
\end{equation}
where $f$ is a continuously differentiable function defined on $\R^d$ (with the inner product $(\cdot,\cdot)_{\R^d}$ and norm $\|\cdot\|_{\R^d}$). The standard gradient method or the steepest descent method for solving \eqref{eq:minf} updates the approximate solution iteratively by
\begin{equation}\label{eq:sdi}
	\x_{k+1}=\x_k-\gamma_k\nabla f(\x_k),\quad k=0,1,\ldots,
\end{equation}
with a step-size $\gamma_k>0$ determined either by an exact or inexact line search. Although the steepest descent method is effective in practice, it may lead to a zigzag-like iterative path and its convergence is usually very slow \cite{SunYuan2006}. It is well known that the quasi-Newton method, which uses the iterative scheme as $\x_{k+1}=\x_k-\mathbf{B}_k^{-1}\nabla f(\x_k)$ with $\mathbf{B}_k$ an appropriate approximation to the Hessian matrix, has often a faster convergence than the steepest descent method since it inherits some merits of the Newton method \cite{SunYuan2006}. Unfortunately, the quasi-Newton method is very expensive for large-scale optimization problems since it involves matrix storage and computations in each iteration.

Rewriting \eqref{eq:sdi} as $\x_{k+1}=\x_k-\mathbf{D}_k\nabla f(\x_k)$ with $\mathbf{D}_k=\gamma_k\mathbf{I}$ and $\mathbf{I}$ the $d\times d$ identity matrix, in which $\mathbf{D}_k$ is regarded as an approximation to the inverse Hessian matrix, the BB method chooses $\gamma_k$ such that $\mathbf{D}_k$ approximately possesses certain quasi-Newton property \cite{BB1988IMANUM}, i.e.,
\begin{equation}\label{eq:sDy}
	\s_k\approx\mathbf{D}_k\y_k\quad\mbox{or}\quad \s_k\approx\gamma_k\y_k,\quad k=1,2,\ldots,
\end{equation}
where $\s_k=\x_{k}-\x_{k-1}$ and $\y_k=\nabla f(\x_{k})-\nabla f(\x_{k-1})$.
Solving \eqref{eq:sDy} in the least-squares sense, i.e., finding $\gamma_k$ to minimize $\|\s_k-\gamma\y_k\|_{\R^d}^2$, yields a BB step-size as
\begin{equation}\label{eq:RnBB1}
	\gamma_{k}^{BB1}=\frac{(\s_k,\y_k)_{\R^d}}{(\y_k,\y_k)_{\R^d}},\quad (\y_k,\y_k)_{\R^d}>0,\quad k=1,2,\ldots.
\end{equation}
According to the symmetry, one can alternatively minimize $\|\gamma^{-1}\s_k-\y_k\|_{\R^d}^2$ to obtain another BB step-size as
\begin{equation}\label{eq:RnBB2}
	\gamma_{k}^{BB2}=\frac{(\s_k,\s_k)_{\R^d}}{(\s_k,\y_k)_{\R^d}},\quad (\s_k,\y_k)_{\R^d}>0,\quad k=1,2,\ldots.
\end{equation}
In some sense, the BB method can be viewed as a very simple quasi-Newton method. So it may inherit the advantages of the quasi-Newton method with fast convergence without matrix operations. Actually, it is observed numerically in practice that the BB method often greatly speeds up the convergence of the gradient method \cite{BB1988IMANUM,Fletcher2005}.

However, due to essentially nonmonotone behaviors, there are potential difficulties in the convergence analysis for the BB method. In general, a globalization strategy founded on the nonmonotone line search is necessary for the BB method \cite{R1997SIOPT,SunYuan2006}. The basic idea is to use the step-size $\gamma_k=\beta_k\gamma_k^{BB}$ ($\gamma_k^{BB}=\gamma_k^{BB1}$ or $\gamma_k^{BB}=\gamma_k^{BB2}$) searched by a nonmonotone line search strategy, where the factor $\beta_k\in(0,1]$ plays the role of the step-size of the ``quasi-Newton" iteration: $\x_{k+1}=\x_k-\beta_k\mathbf{D}_k\nabla f(\x_k)$ with $\mathbf{D}_k=\gamma_k^{BB}\mathbf{I}$ for $k=1,2,\ldots$, and $\mathbf{D}_0=\gamma_0\mathbf{I}$ for a given $\gamma_0>0$. Recall that, as is one of popular nonmonotone line search strategies in the optimization theory, the ZH nonmonotone line search \cite{ZH2004SIOPT} is to find $\gamma_k^{ZH}=\lambda_k\rho^{m_k}$ with $m_k$ the smallest nonnegative integer satisfying
\begin{equation}\label{eq:nmlsZH}
	f(\x_k+\gamma_k^{ZH}\mathbf{d}_k) \leq C_k + \sigma\gamma_k^{ZH} (\nabla f(\x_k), \mathbf{d}_k)_{\R^d}, \quad k=0,1,\ldots,
\end{equation}
where $C_k$ is a weighted average of $\{f(\x_j), j=0,1,\ldots,k\}$, $\mathbf{d}_k\in\R^d$ denotes a descent direction at $\x_k$, $\lambda_k$ is a trial step-size and $\sigma,\rho\in(0,1)$ are given parameters. We remark here that the ideas of combining the BB method with nonmonotone globalization strategies in the optimization theory to speed up the convergence of the algorithm are that choosing $\mathbf{d}_k=-\nabla f(\x_k)$ and the trial step-size as the BB step-size, i.e.,  $\lambda_k=\gamma_k^{BB}$, for $k=1,2,\ldots$, explicitly.

\section{Nonmonotone LMM}
\label{sec:nmlmm}

In this section, in order to relax the restriction of the strict decrease of the objective functional value at each iterative step, we propose a kind of nonmonotone LMM by introducing the normalized ZH-type nonmonotone step-size search strategy for the Fr\'{e}chet-differentiable functionals on a Hilbert space. Further, some related properties are analyzed and the global convergence analysis are established. The same notations as those in Sect.~\ref{sec:pre} will be used unless specified.

In order to construct the normalized ZH-type nonmonotone step-size search rule for the LMM and establish its feasibility, the following lemma is needed.

\begin{lemma}\label{lem:ZHj}
	Suppose $E\in C^1(X,\R)$ and let $p(v)=t_vv+w_v^L$ with $t_v\geq0$ and $w_v^L\in L$ be a peak selection of $E$ w.r.t. $L$ at $v\in S$, and $k$ be some positive integer. Take $v_0\in S\backslash L$, $\sigma\in(0,1)$, $0\leq\eta_{\min}<\eta_{\max}\leq1$, $\eta_j\in[\eta_{\min},\eta_{\max}]$ and $\alpha_j>0$, $j=0,1,\ldots,k-1$. Set $Q_0=1$, $C_0=E(p(v_0))$, $t_j=t_{v_j}$, $g_j=\nabla E(p(v_j))$ and
	\begin{align*}
		v_{j+1}&=v_j(\alpha_j)=\frac{v_j-\alpha_jg_j}{\|v_j-\alpha_jg_j\|}, \quad
		Q_{j+1}=\eta_jQ_j+1,\\
		C_{j+1}&=(\eta_jQ_jC_j+E(p(v_{j+1})))/Q_{j+1},\quad j=0,1,\ldots,k-1.
	\end{align*}
	Assume that
	\begin{equation}\label{eq:ZHj}
		E(p(v_j(\alpha_j)))\leq C_j-\sigma\alpha_jt_j\|g_j\|^2,\quad j=0,1,\ldots,k-1.
	\end{equation}
	If (i) $p$ is continuous at $v_k$;
	(ii) $t_k=t_{v_k}>0$; and
	(iii) $g_k=\nabla E(p(v_k))\neq0$ hold, then there exists $\alpha_k^A>0$ such that
	\[ E(p(v_k(\alpha)))<C_k-\sigma\alpha t_k\|g_k\|^2,\quad \forall\,\alpha\in(0,\alpha_k^A). \]
\end{lemma}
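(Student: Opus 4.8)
The plan is to reduce Lemma~\ref{lem:ZHj} to Lemma~\ref{lem:armijo} by showing that the ZH-averaged reference value $C_k$ always dominates the current functional value $E(p(v_k))$, i.e.\ $C_k \geq E(p(v_k))$. Indeed, Lemma~\ref{lem:armijo} (applied at the point $v_k$, whose hypotheses (i)--(iii) are exactly the hypotheses (i)--(iii) assumed here) already furnishes, for any $\sigma\in(0,1)$, some $\alpha_k^A>0$ with
\[
 E(p(v_k(\alpha))) < E(p(v_k)) - \sigma\alpha t_k\|g_k\|^2, \quad \forall\,\alpha\in(0,\alpha_k^A),
\]
and then $E(p(v_k)) \leq C_k$ immediately upgrades this to the desired inequality $E(p(v_k(\alpha))) < C_k - \sigma\alpha t_k\|g_k\|^2$. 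So the entire content of the lemma is the elementary inequality $C_k\ge E(p(v_k))$, which should be proved by induction on $k$ using the recursive definitions of $Q_j$ and $C_j$ together with the ZH-type acceptance inequalities \eqref{eq:ZHj}.

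Concretely, first I would record the base facts: $Q_0=1>0$ and, since $\eta_j\ge\eta_{\min}\ge 0$, the recursion $Q_{j+1}=\eta_jQ_j+1$ gives $Q_j\ge 1>0$ for all $j$, so every division by $Q_{j+1}$ in the definition of $C_{j+1}$ is legitimate. For the induction, the claim to carry is $C_j\ge E(p(v_j))$ for $j=0,1,\ldots,k$. The base case $j=0$ holds with equality since $C_0=E(p(v_0))$. For the inductive step, suppose $C_j\ge E(p(v_j))$. Write, using the definition of $C_{j+1}$,
\[
 C_{j+1} - E(p(v_{j+1})) = \frac{\eta_jQ_jC_j + E(p(v_{j+1}))}{Q_{j+1}} - E(p(v_{j+1})) = \frac{\eta_jQ_j\bigl(C_j - E(p(v_{j+1}))\bigr)}{Q_{j+1}},
\]
where the last equality uses $Q_{j+1}=\eta_jQ_j+1$. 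Since $\eta_jQ_j\ge 0$ and $Q_{j+1}>0$, it suffices to show $C_j\ge E(p(v_{j+1}))$. But the acceptance condition \eqref{eq:ZHj} at index $j$ gives $E(p(v_{j+1})) = E(p(v_j(\alpha_j))) \le C_j - \sigma\alpha_jt_j\|g_j\|^2 \le C_j$, since $\sigma,\alpha_j,t_j\ge 0$ and $\|g_j\|^2\ge 0$. Hence $C_{j+1}-E(p(v_{j+1}))\ge 0$, closing the induction and yielding in particular $C_k\ge E(p(v_k))$.

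There is no serious obstacle here; the only point requiring a little care is bookkeeping the indices so that the hypothesis \eqref{eq:ZHj} is invoked only for $j=0,\ldots,k-1$ (which is exactly the range needed to push the induction from $j$ to $j+1$ for $j+1\le k$), and confirming that $Q_j>0$ so that the quotients are well defined and the sign of $\eta_jQ_j/Q_{j+1}$ is nonnegative. After establishing $C_k\ge E(p(v_k))$, one simply quotes Lemma~\ref{lem:armijo} at $v_k$ to obtain $\alpha_k^A>0$ and chains the two inequalities. I would also remark that the same reasoning shows $C_j\ge E(p(v_j))$ uniformly, which is the structural fact making the ZH-type nonmonotone rule a genuine relaxation of the monotone Armijo rule while still controlling the reference value from below; this observation will be reused in the feasibility and convergence arguments later in the section.
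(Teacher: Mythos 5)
Your proposal is correct and follows essentially the same route as the paper: show $C_k \geq E(p(v_k))$ and then chain this with Lemma~\ref{lem:armijo} applied at $v_k$. The only (cosmetic) difference is that the paper gets $C_k\geq E_k$ in one step from the single acceptance inequality $E_k\leq C_{k-1}$, whereas you run an induction whose inductive hypothesis $C_j\geq E_j$ is never actually used in the inductive step (only $C_j\geq E(p(v_{j+1}))$, which comes straight from \eqref{eq:ZHj}, is needed), so the induction is superfluous but harmless.
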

\begin{proof}
	Denote $E_j=E(p(v_j))$, $j=0,1,\cdots,k$. From \eqref{eq:ZHj}, we have $E_{j+1}\leq C_j$ for $j=0,1,\ldots,k-1$. In particular, $E_k\leq C_{k-1}$. Hence
	\begin{equation}\label{eq:ZHj:eq1}
		C_k=(\eta_{k-1}Q_{k-1}C_{k-1}+E_k)/Q_k\geq (\eta_{k-1}Q_{k-1}E_k+E_k)/Q_k=E_k.
	\end{equation}
	Lemma~\ref{lem:armijo} states that there exists $\alpha_k^A>0$ such that
	\begin{equation}\label{eq:ZHj:eq2}
		E(p(v_k(\alpha)))<E_k-\sigma\alpha t_k\|g_k\|^2,\quad\forall\,\alpha\in(0,\alpha_k^A).
	\end{equation}
	The conclusion follows from the combination of \eqref{eq:ZHj:eq1} and \eqref{eq:ZHj:eq2}.
\end{proof}

Lemma \ref{lem:armijo} and Lemma~\ref{lem:ZHj} inspire us to define a normalized ZH-type nonmonotone step-size as follows.
\begin{definition}{\bf (Normalized ZH-type nonmonotone step-size)}\label{def-ZH}
	For $k=0,1,\ldots$, take $\sigma,\rho$ $\in(0,1)$, $0<\lambda_{\min}\leq\lambda_k\leq\lambda_{\max}<+\infty$, $0\leq\eta_{\min}\leq\eta_j\leq\eta_{\max}\leq1,\,j=0,1,\ldots,k-1$.
	If $\alpha=\lambda_k\rho^{m_k}$ and $m_k$ is the smallest positive integer satisfying
	\begin{equation}\label{eq:ZHcond}
		E(p(v_k(\alpha))) \leq C_k-\sigma\alpha t_k\|g_k\|^2,
	\end{equation}
	with $g_k=\nabla E(p(v_k))$, $Q_0=1$, $C_0=E(p(v_0))$ and
	\[ Q_j=\eta_{j-1} Q_{j-1}+1,\quad C_j=(\eta_{j-1} Q_{j-1}C_{j-1}+E(p(v_{j})))/Q_{j}, \quad j=1,2,\ldots,k, \]
	then we say that $\alpha$ is a normalized ZH-type nonmonotone step-size at $v_k$.
\end{definition}

Here, $\lambda_k\in[\lambda_{\min},\lambda_{\max}]$ is a trial step-size with the parameters $\lambda_{\min}$ and $\lambda_{\max}$ used to prevent the trial step-size from being too small or large. Reviving Algorithm~\ref{alg:lmm}, the algorithm of the LMM with the normalized ZH-type nonmonotone step-size search rule \eqref{eq:ZHcond} is described in Algorithm~\ref{alg:zhlmm}.

In the subsequent discussion in this subsection, we use the same notations and parameters as those in Algorithm~\ref{alg:zhlmm} unless specified. The feasibility of Algorithm~\ref{alg:zhlmm} is guaranteed by the following theorem and directly follows from Lemmas \ref{lem:armijo} and~\ref{lem:ZHj}.

\begin{algorithm}[!ht]
	\caption{Normalized ZH-type Nonmonotone Local Minimax Algorithm.}
	\label{alg:zhlmm}\normalsize
	Choose $\sigma,\rho\in(0,1)$, $0<\lambda_{\min}<\lambda_{\max}<+\infty$, $0\leq\eta_{\min}<\eta_{\max}\leq1$,  $Q_0=1$ and $C_0=E(p(v_0))$. {\bf Steps 1-3} are the same as those in Algorithm~\ref{alg:lmm}.
	\begin{enumerate}[\bf Step 1.]
		\setcounter{enumi}{3}
		\item Choose a trial step-size $\lambda_k\in[\lambda_{\min},\lambda_{\max}]$ and find
		\begin{equation}\label{eq:ak-zh}
			\alpha_k=\max_{m\in\N}\left\{\lambda_k\rho^m: E(p(v_k(\lambda_k\rho^m)))\leq C_k-\sigma\lambda_k\rho^mt_k\|g_k\|^2 \right\},
		\end{equation}
		where the initial guess $w=t_k v_k(\lambda_k\rho^m)+w_k^L$ is used to find the local maximizer $p(v_k(\lambda_k\rho^m))$ of $E$ on $[L, v_k(\lambda_k\rho^m)]$ for $m=0,1,\ldots$. \\[5pt]
		\hspace*{2em}Set $v_{k+1}=v_k(\alpha_k)$ and choose $\eta_k\in[\eta_{\min},\eta_{\max}]$ to calculate
		\begin{equation}\label{eq:QCupdate}
			Q_{k+1}=\eta_kQ_k+1,\quad C_{k+1}=\Big(\eta_kQ_kC_k+E(p(v_k(\alpha_k)))\Big)\big/Q_{k+1}.
		\end{equation}
		Update $k:=k+1$ and go to {\bf Step~2}.
	\end{enumerate}
\end{algorithm}

\begin{theorem}\label{thm:ZH}
	Assume that $E\in C^1(X,\R)$ has a peak selection $p$ of $E$ w.r.t. $L$. Let $\{v_j\}_{j=0}^k\subset \mathcal{V}_0$ be a sequence generated by Algorithm~\ref{alg:zhlmm} with $g_j\neq0$, $\forall\, j=0,1,\ldots,k$, for some $k\geq0$. If there hold (i) $p$ is continuous on $\mathcal{V}_0$ and (ii) $t_j>0$, $\forall\, j=0,1,\ldots,k$, then for each $j=0,1,\ldots,k$, there exists $\alpha_j^A>0$ such that
	\[ E(p(v_j(\alpha)))<C_j-\sigma\alpha t_j\|g_j\|^2,\quad \forall\,\alpha\in(0,\alpha_j^A). \]
\end{theorem}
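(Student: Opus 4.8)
The plan is to argue by induction on $j\in\{0,1,\ldots,k\}$, with Lemma~\ref{lem:armijo} handling the base case and Lemma~\ref{lem:ZHj} handling the inductive step. Lemma~\ref{lem:ZHj} already carries essentially all of the analytic content, so the real task is merely to thread the induction so that its hypothesis \eqref{eq:ZHj} is genuinely available at each stage.

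For the base case $j=0$, I would note that $C_0=E(p(v_0))$ by construction and that $p$ is continuous at $v_0$ (from assumption (i) together with $v_0\in\mathcal{V}_0\subset S\backslash L$), $t_0>0$ (from (ii)), and $g_0\neq0$ (by hypothesis). Lemma~\ref{lem:armijo} applied with $v=v_0$ then yields $\alpha_0^A>0$ such that $E(p(v_0(\alpha)))<E(p(v_0))-\sigma\alpha t_0\|g_0\|^2=C_0-\sigma\alpha t_0\|g_0\|^2$ for all $\alpha\in(0,\alpha_0^A)$, which is exactly the claim at $j=0$.

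For the inductive step, fix $1\le j\le k$ and assume the claim holds for all indices $0,1,\ldots,j-1$. The key observation is that the claim at an index $i$ forces the set appearing in the maximum \eqref{eq:ak-zh} at step $i$ to be non-empty: for $m$ large enough one has $\lambda_i\rho^m\in(0,\alpha_i^A)$, so $\lambda_i\rho^m$ satisfies the ZH-type decrease condition. Hence the step-size $\alpha_i=\lambda_i\rho^{m_i}>0$ selected by Algorithm~\ref{alg:zhlmm} is well-defined and obeys $E(p(v_i(\alpha_i)))\le C_i-\sigma\alpha_i t_i\|g_i\|^2$ for every $i=0,1,\ldots,j-1$, with the quantities $Q_i,C_i$ matching via \eqref{eq:QCupdate}. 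This is precisely assumption \eqref{eq:ZHj} of Lemma~\ref{lem:ZHj} with its ``$k$'' taken to be $j$. I would then apply Lemma~\ref{lem:ZHj} at $v_j$ --- its hypotheses (i)--(iii) there hold by our (i), (ii) and the standing $g_j\neq0$ --- to obtain $\alpha_j^A>0$ with $E(p(v_j(\alpha)))<C_j-\sigma\alpha t_j\|g_j\|^2$ for all $\alpha\in(0,\alpha_j^A)$, which closes the induction and finishes the proof.

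As for the main obstacle: there is essentially no analytic difficulty beyond Lemmas~\ref{lem:armijo} and~\ref{lem:ZHj}; the only delicate point is logical rather than computational. One must make sure that the backtracking searches at steps $0,\ldots,j-1$ have actually terminated, so that the inequalities constituting \eqref{eq:ZHj} are in fact available before invoking Lemma~\ref{lem:ZHj}; this is exactly what the inductive hypothesis supplies. Correspondingly, the induction hypothesis must be carried for the entire block of earlier indices and not merely for the immediately preceding one, because Lemma~\ref{lem:ZHj} requires \eqref{eq:ZHj} to hold at all $j=0,\ldots,k-1$ simultaneously.
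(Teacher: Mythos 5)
Your proof is correct and follows essentially the same route as the paper: the base case $j=0$ via Lemma~\ref{lem:armijo} (where the ZH rule reduces to the Armijo rule since $C_0=E(p(v_0))$), and the inductive step via Lemma~\ref{lem:ZHj}. Your explicit remark that the claim at earlier indices guarantees termination of the backtracking searches, so that \eqref{eq:ZHj} is genuinely available, is exactly the point the paper's terser ``inductive argument on $j$'' is relying on.
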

\begin{proof}
	When $k=0$, the normalized ZH-type nonmonotone step-size search rule \eqref{eq:ZHcond} is exactly the normalized Armijo-type step-size search rule stated in \eqref{eq:ak-armijo}, and its feasibility is obvious from Lemma~\ref{lem:armijo}. For $k\geq1$, the assertion can be derived directly from Lemma~\ref{lem:ZHj} with an inductive argument on $j=0,1,\ldots,k$.
\end{proof}

In the following, we begin to consider the global convergence of Algorithm~\ref{alg:zhlmm}. Our analysis is based on some nonlinear functional analysis tools combined with the compactness and the proof by contradiction, and the key points are two aspects: (i) to prove that the sequence $\{v_k\}$ is a Cauchy sequence under the opposite assumptions made in order to derive the contradiction; and (ii) to establish a uniform lower bound for the step-size when the iterative point $p(v_k)$ approaches to a non-critical point. In order to handle these, we make full use of the monotonicity of $\{C_k\}$ described below and the weaker version of the homeomorphism of $p$ given in Lemma~\ref{lem:home}, and recognize the connection between the normalized ZH-type nonmonotone step-size \eqref{eq:ak-zh} and the largest normalized Armijo-type step-size \eqref{eq:maxAstep}.

For $j=0,1,\ldots$, denote $E_j=E(p(v_j))$,  then a direct calculation leads to
\begin{align}\label{eq:qkex}
	Q_{j+1} &= 1+\sum_{i=0}^j\left(\prod_{l=0}^i\eta_{j-l}\right) \leq j+2, \\ 
	\vspace{-1ex}
	\label{eq:ckex}
	C_{j+1} &= \frac{1}{Q_{j+1}}\left(E_{j+1}+\sum_{i=0}^j\left(\prod_{l=0}^i\eta_{j-l}\right)E_{j-i}\right).
\end{align}
Thus, $C_k$ is a convex combination of $\{E_j\}_{j=0}^k$ with large weights on recent $E_j$. We remark here that the choice of $\eta_j$ affects the degree of the nonmonotonicity of the normalized ZH-type nonmonotone step-size search rule \eqref{eq:ZHcond}. In fact, if $\eta_j=0$, $\forall j=0,1,\ldots,k-1$, then $Q_k=1$ and $C_k=E_k$. The normalized ZH-type nonmonotone step-size search rule \eqref{eq:ZHcond} is exactly the monotone normalized Armijo-type step-size search rule given in \eqref{eq:ak-armijo}; if $\eta_j=1$, $\forall j=0,1,\ldots,k-1$, then $Q_k=k+1$ and $C_k=A_k$ with $A_k := \frac{1}{k+1}\sum_{j=0}^kE_j$  the arithmetic mean of $\{E_j\}_{j=0}^k$. Actually, we have the following property, which can be verified by a similar argument in the proof of Lemma~1.1 in \cite{ZH2004SIOPT}. The proof is omitted here for simplicity.
\begin{lemma}\label{lem:ECA}
	The following inequalities hold for Algorithm~\ref{alg:zhlmm} under the same assumptions in Theorem~\ref{thm:ZH}, i.e.,
	\[ E_k\leq C_k\leq A_k\leq E_0,\quad k=0,1,\ldots. \]
\end{lemma}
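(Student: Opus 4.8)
The plan is to prove the chain $E_k\le C_k\le A_k\le E_0$ by establishing the three inequalities separately, using the explicit convex-combination formula \eqref{eq:ckex} for $C_k$ together with the monotone-type descent consequence $E_{j+1}\le C_j$ that comes out of the step-size search rule \eqref{eq:ZHcond}.

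First I would note that the leftmost inequality $E_k\le C_k$ is already contained in the proof of Lemma~\ref{lem:ZHj} (see \eqref{eq:ZHj:eq1}): since \eqref{eq:ZHcond} forces $E_{j+1}\le C_j$ for all $j$, in particular $E_k\le C_{k-1}$, and then the recursion $C_k=(\eta_{k-1}Q_{k-1}C_{k-1}+E_k)/Q_k$ is a weighted average of $C_{k-1}\ge E_k$ and $E_k$, hence $\ge E_k$. So that step is free. For the rightmost inequality $A_k\le E_0$, I would argue by induction: $A_0=E_0$, and for the inductive step I would combine $E_{k+1}\le C_k$ with $C_k\le A_k$ (the middle inequality, proved below) to get $E_{k+1}\le A_k$; then $A_{k+1}=\frac{1}{k+2}\big((k+1)A_k+E_{k+1}\big)\le\frac{1}{k+2}\big((k+1)A_k+A_k\big)=A_k\le E_0$. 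Thus $A_k$ is nonincreasing and bounded above by $E_0$.

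The substantive step is the middle inequality $C_k\le A_k$, and this is where I expect the real work to lie — it is exactly the content of \cite[Lemma~1.1]{ZH2004SIOPT} and the statement of Lemma~\ref{lem:ECA} explicitly cites that argument. The plan is induction on $k$. The base case $k=0$ is $C_0=E_0=A_0$. For the inductive step, assume $C_j\le A_j$ for all $j\le k$; I want $C_{k+1}\le A_{k+1}$. Write $C_{k+1}=\frac{\eta_kQ_k}{Q_{k+1}}C_k+\frac{1}{Q_{k+1}}E_{k+1}$ and $A_{k+1}=\frac{k+1}{k+2}A_k+\frac{1}{k+2}E_{k+1}$. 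Using $C_k\le A_k$ and $E_{k+1}\le C_k\le A_k$ (from the search rule and induction), one bounds $C_{k+1}\le\frac{\eta_kQ_k}{Q_{k+1}}A_k+\frac{1}{Q_{k+1}}E_{k+1}$, and since $\frac{\eta_kQ_k}{Q_{k+1}}+\frac{1}{Q_{k+1}}=1$ this is a convex combination of $A_k$ and $E_{k+1}$ with weight $\frac{1}{Q_{k+1}}$ on $E_{k+1}$. Similarly $A_{k+1}$ is a convex combination of the same two quantities $A_k$ and $E_{k+1}$ with weight $\frac{1}{k+2}$ on $E_{k+1}$. Since $E_{k+1}\le A_k$, the combination is a nonincreasing function of the weight placed on $E_{k+1}$; because $Q_{k+1}\le k+2$ by \eqref{eq:qkex}, we have $\frac{1}{Q_{k+1}}\ge\frac{1}{k+2}$, so $C_{k+1}\le A_{k+1}$.

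The main obstacle is purely bookkeeping: making sure the convex-combination weights are handled correctly (in particular that $\frac{\eta_kQ_k}{Q_{k+1}}C_k+\frac{1}{Q_{k+1}}E_{k+1}$ really is the right decomposition of $C_{k+1}$ from \eqref{eq:QCupdate}, and that $E_{k+1}\le A_k$ rather than the reverse, so that increasing the weight on $E_{k+1}$ decreases the average), and that the inequality $Q_{k+1}\le k+2$ from \eqref{eq:qkex} goes the correct direction to give $1/Q_{k+1}\ge 1/(k+2)$. Once the three inequalities are in hand they chain together immediately, and I would present the argument as a single induction that simultaneously carries $C_k\le A_k$ and $A_k\le A_{k-1}$, with $E_k\le C_k$ quoted from \eqref{eq:ZHj:eq1}, to keep the write-up short.
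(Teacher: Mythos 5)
Your proposal is correct and follows essentially the same route as the paper: both rest on $E_{k+1}\le C_k$ from the search rule, the bound $Q_{k+1}\le k+2$ from \eqref{eq:qkex}, and a single induction carrying $C_k\le A_k\le E_0$. The only cosmetic difference is that the paper packages the weight-comparison step as monotonicity of the auxiliary function $F_k(t)=(tC_{k-1}+E_k)/(t+1)$ evaluated at $t=Q_k-1\le k$, whereas you compare the convex-combination weights $1/Q_{k+1}\ge 1/(k+2)$ directly.
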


Note that the following significant connection between the normalized ZH-type nonmonotone step-size \eqref{eq:ak-zh} and the largest normalized Armijo-type step-size \eqref{eq:maxAstep} is vital to establish the global convergence of Algorithm~\ref{alg:zhlmm}.
\begin{lemma}\label{lem:akgeq}
	Let $\{v_k\}\subset \mathcal{V}_0$ be a sequence generated by Algorithm~\ref{alg:zhlmm} and $\alpha_k$ be the normalized ZH-type nonmonotone step-size \eqref{eq:ak-zh} at $v_k$, then under the same assumptions in Theorem~\ref{thm:ZH}, we have
	\[ \alpha_k\geq\min\left\{\lambda_{\min},\rho\bar{\alpha}^A(v_k)\right\},\quad k=0,1,\ldots, \]
	where $\bar{\alpha}^A(v_k)$, defined in \eqref{eq:maxAstep}, is the largest normalized Armijo-type step-size at $v_k$.
\end{lemma}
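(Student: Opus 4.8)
The plan is to argue by cases on how $m_k$, the smallest positive integer realizing the step-size $\alpha_k = \lambda_k\rho^{m_k}$ in \eqref{eq:ak-zh}, compares to the situation governed by the largest normalized Armijo-type step-size $\bar\alpha^A(v_k)$ of \eqref{eq:maxAstep}. First I would note that, by Lemma~\ref{lem:ECA}, $C_k \geq E_k = E(p(v_k))$, so the ZH-type acceptance inequality \eqref{eq:ZHcond} is \emph{implied} by the corresponding Armijo-type inequality $E(p(v_k(\alpha))) < E(p(v_k)) - \sigma\alpha t_k\|g_k\|^2$. Hence any $\alpha$ that is Armijo-admissible at $v_k$ is also ZH-admissible at $v_k$; in particular, every $\alpha \in (0,\bar\alpha^A(v_k))$ satisfies \eqref{eq:ZHcond}. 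This is the one genuinely new ingredient and it reduces the lemma to a routine backtracking estimate.

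Next I would split into two cases. If $m_k = 1$, then $\alpha_k = \lambda_k\rho \geq \lambda_{\min}\rho$; since also trivially $\alpha_k = \lambda_k\rho^{m_k}$ with $\lambda_k \geq \lambda_{\min}$, we certainly get $\alpha_k \geq \min\{\lambda_{\min},\rho\bar\alpha^A(v_k)\}$ (indeed the first term alone would do once one checks the precise exponent bookkeeping). If instead $m_k \geq 2$, then by minimality of $m_k$ the previous trial value $\lambda_k\rho^{m_k-1}$ was \emph{not} accepted by \eqref{eq:ZHcond}; by the implication established above, $\lambda_k\rho^{m_k-1}$ is therefore not Armijo-admissible, which by the definition \eqref{eq:maxAstep} of $\bar\alpha^A(v_k)$ forces $\lambda_k\rho^{m_k-1} \geq \bar\alpha^A(v_k)$. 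Multiplying by $\rho$ gives $\alpha_k = \lambda_k\rho^{m_k} \geq \rho\bar\alpha^A(v_k)$, hence $\alpha_k \geq \min\{\lambda_{\min},\rho\bar\alpha^A(v_k)\}$. Combining the two cases yields the claim for every $k = 0,1,\ldots$.

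The main obstacle — really the only subtlety — is making sure that $\bar\alpha^A(v_k)$ is well defined and positive at every iterate, i.e.\ that the hypotheses of Lemma~\ref{lem:armijo} (and of the definition \eqref{eq:maxAstep}) hold at each $v_k$: namely $p$ continuous at $v_k$, $t_k > 0$, and $g_k \neq 0$. The first two are exactly assumptions (i) and (ii) carried over from Theorem~\ref{thm:ZH}; the condition $g_k \neq 0$ is automatic, since if $g_k = 0$ then $p(v_k)$ is already a critical point and the algorithm has terminated, so the inequality is vacuous for such $k$. One should also confirm that the ``smallest positive integer $m_k$'' in \eqref{eq:ak-zh} is well defined, which is precisely the content of Theorem~\ref{thm:ZH} (feasibility). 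With these points dispatched, the two-case argument above completes the proof.
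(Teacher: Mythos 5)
Your argument is essentially the paper's own proof: the key observation $C_k\geq E_k$ from Lemma~\ref{lem:ECA} shows that rejection under \eqref{eq:ZHcond} implies rejection under the Armijo test, so the last rejected trial $\rho^{-1}\alpha_k$ yields $\alpha_k\geq\rho\bar{\alpha}^A(v_k)$, while acceptance of the first trial yields $\alpha_k\geq\lambda_{\min}$. The only slip is the indexing of the base case: consistently with \eqref{eq:ak-zh} (and with the paper's proof), the first trial is $m_k=0$, giving $\alpha_k=\lambda_k\geq\lambda_{\min}$; with your base case $m_k=1$ the estimate $\alpha_k=\lambda_k\rho\geq\lambda_{\min}\rho$ does \emph{not} imply $\alpha_k\geq\lambda_{\min}$, and since $m=0$ was never rejected one cannot invoke $\bar{\alpha}^A(v_k)$ either, so that case as written proves nothing --- replace it by the $m_k=0$ case and the proof is complete.
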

\begin{proof}
	In fact, $\alpha_k=\lambda_k\rho^{m_k}$ for some $m_k\in\N$. If $m_k=0$, then $\alpha_k=\lambda_k\geq\lambda_{\min}$ and the conclusion holds. Otherwise, if $m_k>0$, the minimality of $m_k$ and Lemma~\ref{lem:ECA} lead to
	\[
	E(p(v_k(\rho^{-1}\alpha_k)))
	> C_k-\sigma\rho^{-1}\alpha_kt_k\|g_k\|^2
	\geq E_k-\sigma\rho^{-1}\alpha_kt_k\|g_k\|^2.
	\]
	By the definition of the largest normalized Armijo-type step-size $\bar{\alpha}^A(v_k)$ at $v_k$, one  can obtain $\alpha_k\geq\rho\bar{\alpha}^A(v_k)$ and the conclusions hold.
\end{proof}

Now, we are ready to consider the global convergence of Algorithm~\ref{alg:zhlmm}. Note that, by employing \eqref{eq:ak-zh} and \eqref{eq:QCupdate}, we have
\begin{equation}\label{eq:Ckmono}
	C_{k+1}=\frac{\eta_kQ_kC_k+E_{k+1}}{Q_{k+1}}
	\leq \frac{\eta_kQ_kC_k+C_k-\sigma \alpha_kt_k\|g_k\|^2}{Q_{k+1}}
	= C_k-\sigma\frac{\alpha_kt_k\|g_k\|^2}{Q_{k+1}},
\end{equation}
which means that $\{C_k\}$ is monotonically decreasing, though $\{E_k\}$ may not monotonically decrease in general. Actually, the monotonicity of $\{C_k\}$ in \eqref{eq:Ckmono} will play a key role in establishing the global convergence of Algorithm~\ref{alg:zhlmm}.

\begin{theorem}\label{thm:cvg-zhlmm}
	Suppose $E\in C^1(X,\R)$ and let $p$ be a peak selection of $E$ w.r.t. $L$. Further, $\{v_k\}\subset \mathcal{V}_0$ and $\{w_k=p(v_k)\}$ are sequences generated by Algorithm~\ref{alg:zhlmm}. Assume that (i) $p$ is continuous on $\mathcal{V}_0$; (ii) $t_k\geq\delta$ for some $\delta>0$, $k=0,1,\ldots$; and (iii) $\inf_{k\geq0}E_k>-\infty$ hold. Then
	\begin{itemize}
		\item[{\rm(a)}] $\sum_{k=0}^{\infty}\alpha_k\|g_k\|^2/Q_{k+1}<\infty$;
		\item[{\rm(b)}] if $\{w_k\}$ converges to some point $\bar{u}\in X$, $\bar{u}\notin L$ is a critical point.
	\end{itemize}
	Especially, if $\eta_{\max}<1$, then
	\begin{itemize}
		\item[{\rm(c)}] $\sum_{k=0}^{\infty}\alpha_k\|g_k\|^2<\infty$;
		\item[{\rm(d)}] every accumulation point of $\{w_k\}$ is a critical point not belonging to $L$;
		\item[{\rm(e)}] $\liminf_{k\to\infty}\|g_k\|=0$.
	\end{itemize}
	Further, if $E$ satisfies the (PS) condition, then
	\begin{itemize}
		\item[{\rm(f)}] $\{w_k\}$ contains a subsequence converging to a critical point $u_*\notin L$. In addition, if $u_*$ is isolated, $w_k\to u_*$ as $k\to\infty$.
	\end{itemize}
\end{theorem}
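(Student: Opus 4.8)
The plan is to establish the six claims in the stated order, each leaning on the energy--summability produced by the previous ones; the genuinely delicate point is the last sentence of (f). Claim (a) follows by telescoping \eqref{eq:Ckmono}: Lemma~\ref{lem:ECA} gives $E_k\le C_k$ and hypothesis~(iii) gives $E_k\ge\inf_j E_j>-\infty$, so $\{C_k\}$ is nonincreasing and bounded below, hence convergent; summing \eqref{eq:Ckmono} and using $t_k\ge\delta$ yields $\sigma\delta\sum_k\alpha_k\|g_k\|^2/Q_{k+1}\le C_0-\lim_k C_k<\infty$. For (c), the explicit formula \eqref{eq:qkex} with $\eta_j\le\eta_{\max}<1$ gives the uniform bound $Q_{k+1}\le 1+\sum_{i\ge0}\eta_{\max}^{i+1}=(1-\eta_{\max})^{-1}$, and multiplying the series from (a) by this bound gives $\sum_k\alpha_k\|g_k\|^2<\infty$.

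Claims (b) and (d) share one contradiction argument. Suppose $w_{k_i}\to\bar u$ --- the whole sequence in case (b), a subsequence in case (d) --- but $E'(\bar u)\ne0$. By Lemma~\ref{lem:home}, $v_{k_i}\to v_*\in\mathcal V_0$ with $\bar u=p(v_*)$; since $\mathcal V_0\subset S\setminus L$ and, by Lemma~\ref{lem:pv-tv}, $t_{v_*}=\lim_i t_{k_i}\ge\delta>0$, we get $\bar u=t_{v_*}v_*+w^L_{v_*}\notin L$. Because $E\in C^1$, $g_{k_i}=\nabla E(w_{k_i})\to\nabla E(\bar u)\ne0$, so $\|g_{k_i}\|^2\ge c>0$ for large $i$; and Lemma~\ref{lem:s0} at $\bar v=v_*$ (legitimate since $E'(p(v_*))\ne0$) together with Lemma~\ref{lem:akgeq} gives $\alpha_{k_i}\ge\min\{\lambda_{\min},\rho\underline{\alpha}\}=:\alpha_0>0$ once $v_{k_i}$ is close enough to $v_*$. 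In case (b) this yields $\alpha_k\|g_k\|^2/Q_{k+1}\ge\alpha_0 c/(k+2)$ for all large $k$ by \eqref{eq:qkex}, so the series in (a) diverges, a contradiction; in case (d) it yields $\alpha_{k_i}\|g_{k_i}\|^2\ge\alpha_0 c$ along the subsequence, contradicting (c). Hence $E'(\bar u)=0$, and the ``$\notin L$'' assertion of (d) is already proved.

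For claim (e), suppose $\liminf_k\|g_k\|>0$, so $\|g_k\|\ge c>0$ for all large $k$. Then $\alpha_k\|g_k\|\le c^{-1}\alpha_k\|g_k\|^2$ is summable by (c), so by \eqref{eq:va-v} $\sum_k\|v_{k+1}-v_k\|<\infty$ and $\{v_k\}$ is Cauchy, converging to some $v_*\in\mathcal V_0$ ($\mathcal V_0$ being closed). Continuity of $p$ and of $\nabla E$ give $w_k\to\bar u:=p(v_*)\notin L$ and $\|\nabla E(\bar u)\|=\lim_k\|g_k\|\ge c>0$, so $\bar u$ is not critical; Lemma~\ref{lem:s0} at $v_*$ and Lemma~\ref{lem:akgeq} then force $\alpha_k\ge\min\{\lambda_{\min},\rho\underline{\alpha}\}>0$ for all large $k$, while (c) with $\|g_k\|^2\ge c^2$ forces $\alpha_k\to0$, a contradiction; hence $\liminf_k\|g_k\|=0$.

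Finally, for (f), by (e) there is a subsequence with $\|g_{k_i}\|\to0$, i.e., $E'(w_{k_i})\to0$ in $X^*$; by Lemma~\ref{lem:ECA} and (iii), $\{E(w_{k_i})\}$ is bounded, so the (PS) condition yields a further subsequence $w_{k_{i_j}}\to u_*$ with $E'(u_*)=0$ by continuity of $E'$, and $u_*\notin L$ by Lemma~\ref{lem:home} and $t_{v_*}\ge\delta$. For the refinement when $u_*$ is isolated, argue by contradiction: if $w_k\not\to u_*$, one first shows $\|w_{k+1}-w_k\|\to0$ --- here $\|v_{k+1}-v_k\|<\alpha_k\|g_k\|\to0$ by (c) and $\alpha_k\le\lambda_{\max}$, and any subsequence along which $\|w_{k+1}-w_k\|\ge\varepsilon$ is excluded either by extracting, via (PS) and Lemma~\ref{lem:home}, a sub-subsequence $v_k\to v_*$ at which $p$ is continuous (so $w_k,w_{k+1}\to p(v_*)$), or, when $\|g_k\|$ stays bounded below along it, by the $\alpha_k\to0$ versus Lemma~\ref{lem:s0}/\ref{lem:akgeq} clash of (e) --- and then a standard connectedness argument for the accumulation set of $\{w_k\}$, which by (d) consists only of critical points, combined with the isolatedness of $u_*$, forces that set to equal $\{u_*\}$, so $w_k\to u_*$. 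I expect this last step to be the main obstacle: since neither $X$ nor $\mathcal V_0$ is locally compact, both the ``slowly varying'' estimate $\|w_{k+1}-w_k\|\to0$ (especially the case where $\|g_k\|$ does not vanish) and the connectedness of the accumulation set have to be wrung out of the (PS) condition and of the rigidity of peak selections encoded in Lemmas~\ref{lem:pv-tv} and~\ref{lem:home}, rather than from compactness; the details parallel those in \cite{LZ2002SISC,LXY2021CMS}.
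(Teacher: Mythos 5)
Your proposal is correct and follows essentially the same route as the paper's proof: telescoping \eqref{eq:Ckmono} for (a), the uniform bound $Q_{k+1}\le(1-\eta_{\max})^{-1}$ for (c), the contradiction via Lemmas~\ref{lem:home}, \ref{lem:s0} and \ref{lem:akgeq} for (b)/(d), the Cauchy-sequence argument for (e), and the (PS) condition plus a deferral to the literature for the isolated-point refinement in (f), exactly as the paper does (it cites \cite{Z2017CAMC} for that last step). The only cosmetic differences are how the contradiction in (e) is closed (you reuse the step-size lower bound directly where the paper invokes conclusion (b)) and that you sketch the isolated-case argument in slightly more detail than the paper, which simply omits it.
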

\begin{proof}
	Since $\{C_k\}$ is monotonically decreasing by \eqref{eq:Ckmono} and bounded from below by the assumption (iii) and Lemma~\ref{lem:ECA}, it converges to a finite number $C_*$. Then, \eqref{eq:Ckmono} and the assumption (ii) lead to the conclusion (a), i.e.,
	\[ \sum_{k=0}^{\infty}\frac{\alpha_k\|g_k\|^2}{Q_{k+1}}
	\leq \frac{1}{\sigma\delta}\sum_{k=0}^{\infty}(C_k-C_{k+1})
	= \frac{1}{\sigma\delta}(C_0-C_*)
	<\infty. \]
	
	Next, we verify the conclusion (b). By employing Lemma~\ref{lem:mono} and the assumption (ii), we have
	\[ \dist(w_k,L)=t_k\|v_k^\bot\|\geq\delta\|v_0^\bot\|>0,\quad k=0,1,\ldots.\]
	Thus, if $\{w_k\}$ converges to some point $\bar{u}\in X$, it implies that $\dist(\bar{u},L)\geq\delta\|v_0^\bot\|>0$. Immediately, we can obtain $\bar{u}\notin L$. In addition, Lemma~\ref{lem:home} indicates that $\{v_k\}$ converges to some $\bar{v}\in \mathcal{V}_0$ satisfying $\bar{u}=p(\bar{v})$.

	For the sake of contradiction, suppose that $\bar{u}$ is not a critical point, then $\nabla E(\bar{u})\neq0$. Since $E\in C^1(X,\R)$, one can obtain $g_k=\nabla E(w_k)\to\nabla E(\bar{u})\neq0$ as $k\to\infty$. Therefore, $\|g_k\|>\frac12\|\nabla E(\bar{u})\|>0$, for all $k$ large enough. Recalling the conclusion (a), it yields that $\sum_{k=0}^{\infty} \alpha_k/Q_{k+1}<\infty$. By utilizing \eqref{eq:qkex}, we can arrive at
	\begin{equation}\label{eq:sumakQ}
		\sum_{k=0}^{\infty}\frac{\alpha_k}{k+2}\leq\sum_{k=0}^{\infty}\frac{\alpha_k}{Q_{k+1}}<\infty.
	\end{equation}
	
	On the other hand, Lemma~\ref{lem:pv-tv} and the assumption (ii) lead to $t_{\bar{v}}=\lim_{k\to\infty}t_k$ $\geq\delta>0$. According to Lemma~\ref{lem:s0} and Lemma~\ref{lem:akgeq}, there exists $\underline{\alpha}>0$ such that, for all $k$ large enough,
	\begin{equation}\label{eq:akbt0}
		\alpha_k \geq \min\left\{\lambda_{\min},\rho\bar{\alpha}^A(v_k)\right\} \geq \min\left\{\lambda_{\min},\rho\underline{\alpha}\right\}>0.
	\end{equation}
	The combination of \eqref{eq:sumakQ} and \eqref{eq:akbt0} yields $\sum_{k=0}^\infty 1/(k+2)<\infty$, which is a contradiction. Thus, $\bar{u}\notin L$ is a critical point and the conclusion (b) is obtained.

	If $\eta_{\max}<1$, then revisiting \eqref{eq:qkex}, it is easy to see that
	\[ Q_{k+1}\leq 1+\sum_{j=0}^k \eta_{\max}^{j+1}<\frac{1}{1-\eta_{\max}}<\infty. \]
	Hence, the conclusion (c) directly follows from the conclusion (a).
	Moreover, by the conclusion (c) and an analogous argument in the proof of the conclusion (b), the conclusion (d) is obvious.

	Now, to consider the conclusion (e), suppose that $\delta_1:=\liminf_{k\to\infty}\|g_k\|>0$ by the contradiction argument. Then, $\|g_k\|\geq\delta_1/2>0$, for all $k$ large enough. One can see from the conclusion (c) that $\sum_{k=0}^{\infty}\alpha_k<\infty$ and $\sum_{k=0}^{\infty}\alpha_k\|g_k\|<\infty$. It immediately leads to $\alpha_k\to0$ as $k\to\infty$ and
	\[ \sum_{k=0}^{\infty}\|v_{k+1}-v_k\|\leq \sum_{k=0}^{\infty}\alpha_k\|g_k\|<\infty, \]
	where the inequality $\|v_{k+1}-v_k\|=\|v_k(\alpha_k)-v_k\|<\alpha_k\|g_k\|$ is employed according to Lemma~\ref{lem:vs}. Hence, $\{v_k\}$ is a Cauchy sequence. Note that $\{v_k\}$ is contained in the closed subset $\mathcal{V}_0$ which is complete, thus there exists $\bar{v}\in \mathcal{V}_0$ such that $v_k\to\bar{v}$ as $k\to\infty$. By the continuity of $p$ and $E'$, we have $g_k\to\nabla E(p(\bar{v}))$ as $k\to\infty$ and
	\[ \|\nabla E(p(\bar{v}))\|=\lim_{k\to\infty}\|g_k\|=\delta_1>0. \]
	However, from the conclusion (b), $p(\bar{v})=\lim_{k\to\infty}p(v_k)$ must be a critical point. This is a contradiction. Consequently, the conclusion (e) holds.

	The rest of the proof is to verify the conclusion (f). Due to the conclusion (e), one can find a subsequence $\{v_{k_i}\}$ such that $E'(w_{k_i})=E'(p(v_{k_i}))\to0$ in $X^*$ as $i\to\infty$. In view of Lemma~\ref{lem:ECA} and the assumption (iii), $\{E(w_{k_i})\}$ is bounded, i.e.,
	\[ \inf_{k\geq0}E_k\leq E(w_{k_i})\leq E_0,\quad i=0,1,\ldots. \]
	Then, by the (PS) condition, $\{w_{k_i}\}$ possesses a subsequence, still denoted by $\{w_{k_i}\}$, that converges to a critical point $u_*$, and $u_*\notin L$ in view of the conclusion (d). Finally, by using the assumption that $u_*$ is isolated and following the lines of the original proof for the global sequence convergence of the normalized Armijo-type LMM in \cite{Z2017CAMC} which is skipped here for brevity, we can reach the global sequence convergence, i.e., $w_k\to u_*$ as $k\to\infty$.
\end{proof}

\begin{remark}
	As discussed above, the normalized Armijo-type step-size search rule given in  \eqref{eq:ak-armijo} is a special case of the normalized ZH-type nonmonotone step-size search rule \eqref{eq:ZHcond} with $\eta_j=0$ ($j=0,1,\ldots,k$). Thus, Theorem~\ref{thm:cvg-zhlmm} covers the global convergence of the LMM with the normalized Armijo-type step-size search rule stated in \eqref{eq:ak-armijo}.
\end{remark}

\begin{remark}
	The assumption (ii) in Theorem~\ref{thm:cvg-zhlmm} is crucial to guarantee the critical point obtained by Algorithm~\ref{alg:zhlmm} is away from previously found critical points in $L$. When $L=\{0\}$, assumptions (i) and (ii) can be verified for energy functionals associated with several typical BVPs of PDEs occurring in Sect.~\ref{sec:numer}. Although it is not easy to theoretically prove the assumption (ii) in general cases, it is effective to numerically check it in practical computations.
\end{remark}

\section{Globally convergent BB-type LMM (GBBLMM)}
\label{sec:gbblmm}
In this section, we present the GBBLMM by using the nonmonotone globalizations developed in Sect.~\ref{sec:nmlmm} with a BB-type trial step-size for $\lambda_k$ at $v_k$. First, we modify the BB method in the optimization theory and construct the BB-type step-size for the LMM iteration.

\subsection{BB-type step-size for the LMM}
\label{sec:bbsslmm}

From Theorem~\ref{thm:lmt0}, under some assumptions, the local solution $v_*$ to the minimization problem
\begin{equation}\label{eq:lmm-minEpv}
	\min_{v\in\mathcal{V}_0}E(p(v))
\end{equation}
satisfies $\nabla E(p(v_*))=0$ and $p(v_*)\notin L$ (i.e., $p(v_*)\notin L$ is a critical point). As discussed in previous sections, the LMM iteration for solving the minimization problem \eqref{eq:lmm-minEpv} is
\begin{equation}\label{eq:lmm-iter}
	v_{k+1} =v_k(\alpha_k)= \frac{v_k-\alpha_kg_k}{\|v_k-\alpha_kg_k\|},\quad k=0,1,\ldots,
\end{equation}
where $g_k=\nabla E(p(v_k))$ and $v_k\in \mathcal{V}_0\subset S\backslash L$. A direct calculation shows that
\begin{align*}
	\left\|v_k(\alpha) - \left(v_k-\alpha g_k\right)\right\|
	&= \left|\frac{1}{\sqrt{1+\alpha^2\|g_k\|^2}}-1\right|\left\|v_k-\alpha g_k\right\|
	= \frac{\alpha^2\|g_k\|^2}{1+\sqrt{1+\alpha^2\|g_k\|^2}},
\end{align*}
and then $v_k(\alpha) = v_k-\alpha g_k + O\left(\alpha^2\|g_k\|^2\right)$. Hence, the linearized iterative scheme
\begin{equation}\label{lmm-iter-linearized}
	v_{k+1}=v_k-\alpha_k g_k\quad\mbox{or}\quad v_{k+1}=v_k-D_k\nabla E(p(v_k)), \quad k=0,1,\ldots,
\end{equation}
with $D_k=\alpha_kI$ and $I$ the identity operator in $X$, is a second-order approximation to the nonlinear iterative scheme \eqref{eq:lmm-iter}.

Similar to the BB method in the optimization theory, one can construct a linear iterative scheme \eqref{lmm-iter-linearized} for the nonlinear equation $\nabla E(p(v))=0$ with $\alpha_k$ as a BB-type step-size. Intuitively, such an $\alpha_k$ can serve as the step-size of the nonlinear iterative scheme \eqref{eq:lmm-iter} and is still called a BB-type step-size due to the fact that \eqref{lmm-iter-linearized} is a second-order approximation to \eqref{eq:lmm-iter}. For this purpose, the step-size $\alpha_k$ is chosen such that $D_k=\alpha_kI$ approximately satisfies the ``secant equation"
\begin{equation}\label{eq:dys}
	D_k y_k=s_k, \quad k=1,2\ldots,
\end{equation}
with $s_k=v_{k}-v_{k-1}$ and $y_k=g_{k}-g_{k-1}$. Then, solving the least-squares problem
\begin{equation}\label{eq:min-say}
	\min_{\alpha}\|s_k-\alpha y_k\|^2 \quad \mbox{or}\quad \min_{\beta}\|\beta s_k-y_k\|^2\quad\mbox{(w.r.t. $\beta=\alpha^{-1}$)},\quad k=1,2\ldots,
\end{equation}
yields BB-type step-sizes respectively as
\begin{equation}\label{eq:bbss}
	\alpha_k^{\text{BB1}}=\frac{(s_k, y_k)}{(y_k, y_k)} \quad\mbox{or}\quad
	\alpha_k^{\text{BB2}}=\frac{(s_k, s_k)}{(s_k, y_k)}\quad \mbox{if } (s_k, y_k)>0,\quad k=1,2\ldots.
\end{equation}

Another slightly different construction of the BB-type step-size can be obtained by considering the constrained minimization problem \eqref{eq:LMM2} from the point of view of manifold optimization. In fact, a Riemannian BB method for optimization on finite-dimensional Riemannian manifolds has been recently developed in \cite{IP2018IMAJNA} and the so-called vector transport is utilized to move vectors from a tangent space to another in it. Relevant to similar ideas in \cite{IP2018IMAJNA}, we construct the projected BB-type step-size for the LMM in Hilbert space. However, due to the unit sphere $S$ involved is a simple Hilbert-Riemannian manifold with a natural Riemannian metric induced by the inner product $(\cdot,\cdot)$ of $X$, we will avoid the general mathematical setting of infinite-dimensional Hilbert-Riemannian manifolds, which can be found, e.g., in \cite{Lang1995}.

The tangent space to the unit spherical manifold $S$ at a point $v\in S$ is given by $T_vS:=\{w\in X:(v, w)=0\}$, which is a Hilbert subspace equipped with the inner product $(\cdot,\cdot)_v=(\cdot,\cdot)$ and the norm $\|\cdot\|_v=\|\cdot\|$. Note that the second-order Fr\'{e}chet-derivative of a smooth functional defined on $S$ at $v\in S$ is a linear mapping from $T_vS$ to $T_vS$ \cite{Lang1995}. To preserve this property, both vectors $s_k$ and $y_k$ appearing in the secant equation \eqref{eq:dys} should belong to $T_{v_k}S$. Replacing $s_k$ and $y_k$ in \eqref{eq:min-say} with $\hat{s}_k=P_{v_k}s_k$ and $\hat{y}_k=P_{v_k}y_k$, respectively, where $P_{v_k}$ denotes the orthogonal projection from $X$ onto $T_{v_k}S$ with $P_{v_k}u=u-(u, v_k)v_k$, $\forall\,u\in X$, we can obtain the following projected BB-type step-size as
\begin{equation}\label{eq:pbbss}
	\alpha_k^{\text{PBB1}}=\frac{(\hat{s}_k,\hat{y}_k)}{(\hat{y}_k,\hat{y}_k)} \quad\mbox{or}\quad
	\alpha_k^{\text{PBB2}}=\frac{(\hat{s}_k,\hat{s}_k)}{(\hat{s}_k,\hat{y}_k)}\quad \mbox{if } (\hat{s}_k,\hat{y}_k)>0,\quad k=1,2\ldots.
\end{equation}

Clearly, $P_{v_k}(v_k)=0$. Applying the fact that $g_k\in[L,v_k]^\bot$ from Lemma~\ref{lem:orth}, it yields $P_{v_k}(g_k)=g_k$. Hence, for $k=1,2\ldots$, we have
\begin{align*}
	\hat{s}_k = -P_{v_{k}}\left(v_{k-1}\right) = -\alpha_{k-1}P_{v_k}(g_{k-1}),\quad
	\hat{y}_k = g_k-P_{v_k}(g_{k-1}) = g_k+\hat{s}_k/\alpha_{k}.
\end{align*}
We remark here that in \eqref{eq:bbss} and \eqref{eq:pbbss}, the main computational cost is the calculation of inner products. In addition, from the expression of $\hat{s}_k$ and $\hat{y}_k$, the key ingredient is the computation of the projection $P_{v_k}(g_{k-1})$ with $P_{v_k}(g_{k-1})=g_{k-1}-(g_{k-1},v_k)v_k$. In practice, compared to the BB-type step-size  \eqref{eq:bbss}, only one additional inner product, i.e., $(g_{k-1},v_k)$, needs to be calculated in the projected BB-type step-size \eqref{eq:pbbss}.

\subsection{BB-type LMM with nonmonotone globalizations}

Owing to essentially nonmonotone behaviors of the BB method, the nonlinearity and nonconvexity of the functional $E$ and the multiplicity and instability of saddle points, the convergence analysis for the LMM with the (projected) BB-type step-size is potentially difficult. To obtain a convergence safeguard, one needs to develop a globalization strategy. For this purpose, we propose the GBBLMM combined with a nonmonotone search strategy developed in Sect.~\ref{sec:nmlmm} with the trial step-size $\lambda_k$ determined by utilizing the BB-type step-size \eqref{eq:bbss} or the projected BB-type step-size \eqref{eq:pbbss}.

Since the (projected) BB-type step-size is defined for $k\geq1$, an appropriate initial trial step-size $\lambda_0$ is needed.
For $k\geq 1$, when $(s_k, y_k)\leq0$ (respectively, $(\hat{s}_k,\hat{y}_k)\leq0$), BB-type step-size \eqref{eq:bbss} (respectively, the projected BB-type step-size \eqref{eq:pbbss}) is unavailable. In this case, we simply set the trial step-size as $\lambda_k=\lambda_0$. In terms of cases when the (projected) BB-type step-size is unacceptably large or small, we must assume that the trial step-size $\lambda_k$ satisfies the condition
\[0<\lambda_{\min}\leq\lambda_k\leq\lambda_{\max},\quad k=1,2,\ldots. \]
Here, $\lambda_{\min}$ is to prevent $\lambda_k$ from being too small while $\lambda_{\max}$ is to avoid the search along the curve $\{v_k(\alpha):\alpha>0\}$ going too far and to enhance the stability of the algorithm. Hence, for $k\geq1$, the trial step-size $\lambda_k$ can be defined as one of the following,
\begin{align}
	\lambda_k &=
	\begin{cases}
		\min\left\{\max\left\{\alpha_k^{\text{BB}}, \lambda_{\min}\right\}, \lambda_{\max}\right\}, & \mbox{if } (s_k, y_k)>0, \\
		\lambda_0, &\mbox{otherwise},
	\end{cases} \label{eq:lambdak-bb} \\
	\lambda_k &= \begin{cases}
		\min\left\{\max\left\{\alpha_k^{\text{PBB}}, \lambda_{\min}\right\}, \lambda_{\max}\right\}, & \mbox{if } (\hat{s}_k,\hat{y}_k)>0, \\
		\lambda_0, &\mbox{otherwise},
	\end{cases} \label{eq:lambdak-pbb}
\end{align}
with $\alpha_k^{\text{BB}}\in\{\alpha_k^{\text{BB1}},\alpha_k^{\text{BB2}}\}$ and $\alpha_k^{\text{PBB}}\in\{\alpha_k^{\text{PBB1}},\alpha_k^{\text{PBB2}}\}$. Alternatively, similar to adaptive BB methods in optimization theory in Euclidean spaces \cite{DHL2019COA,HDL2021SIOPT}, one can also adaptively choose $\alpha_k^{\text{BB}}$ in \eqref{eq:lambdak-bb} and $\alpha_k^{\text{PBB}}$ in \eqref{eq:lambdak-pbb}. Two adaptive strategies to compute $\alpha_k^{\text{BB}}$ in \eqref{eq:lambdak-bb} are described in Appendix~\ref{app:adapBB}.

Revisiting Algorithms~\ref{alg:lmm} and \ref{alg:zhlmm}, the main steps of the GBBLMM are summarized in Algorithm~\ref{alg:gbblmm}.

\begin{algorithm}[!ht]
	\caption{Algorithm of the GBBLMM.}
	\label{alg:gbblmm}\normalsize
	\begin{enumerate}[\bf Step~1.]
		\item Perform the same initialization as Algorithm~\ref{alg:lmm} and Algorithm~\ref{alg:zhlmm}. Take $\lambda_0\in[\lambda_{\min},\lambda_{\max}]$. Set $k:=0$. Compute $w_0=p(v_0)$ and $g_0=\nabla E(w_0)$. Repeat {\bf Steps~2-4} until the stopping criterion is satisfied (e.g., $\|\nabla E(w_k)\|\leq \varepsilon_{\mathrm{tol}}$ for a given tolerance $0<\varepsilon_{\mathrm{tol}}\ll 1$), then output $u_n=w_k$.
		\item Find $\alpha_k=\lambda_k\rho^{m_k}$ with $m_k$ the smallest nonnegative integer satisfying the normalized ZH-type nonmonotone step-size search rule \eqref{eq:ZHcond} as in Algorithm~\ref{alg:zhlmm}.
		\item Set $v_{k+1}=v_k(\alpha_k)$ and $w_{k+1}=p(v_k(\alpha_k))$, compute $g_{k+1}=\nabla E(w_{k+1})$, and update $k:=k+1$.
		\item Compute $\lambda_k$ according to \eqref{eq:lambdak-bb} or \eqref{eq:lambdak-pbb} and go to {\bf Step~2}.
	\end{enumerate}
\end{algorithm}

\vspace{-10pt}

\section{Numerical experiments}
\label{sec:numer}
In this section, we apply Algorithm~\ref{alg:gbblmm} to find multiple unstable solutions of several nonlinear BVPs with variational structure. We take parameters $\sigma=10^{-4}$, $\rho=0.2$, $\eta_k\equiv0.85$, $\lambda_{\min}=10^{-6}$, $\lambda_{\max}=10$, then set $\lambda_0=0.1$ and $\lambda_k$ $(k\geq1)$ defined in \eqref{eq:lambdak-bb} with $\alpha_k^{\text{BB}}=\alpha_k^{\text{BB1}}$ unless specified. We remark here that, unless specified, the numerical experiments in this paper are implemented with MATLAB (R2017b) under the PC with the Inter Core i5-4300M CPU (2.60GHz) and a 4.00GB RAM. 
In the code of Algorithm~\ref{alg:gbblmm}, the MATLAB subroutine {\ttfamily fminunc} is called to compute the local peak selection.

\subsection{Semilinear Dirichlet BVPs}
\label{sec:numer:slpde}
Consider the homogeneous Dirichlet BVP
\begin{equation}\label{eq:slpde}
	-\Delta u(\x)=f(\x,u(\x))\quad \mbox{in }\Omega, \qquad
	u(\x)=0\quad\mbox{on }\partial\Omega,
\end{equation}
where $\Omega$ is a bounded domain in $\R^d$ with a Lipschitz boundary $\partial\Omega$ and the function $f:\bar{\Omega}\times\R\to \R$ satisfies standard hypotheses ($f$1)-($f$4) \cite{LZ2001SISC,Rabinowitz1986} given below. We omit the variable $\x\in\bar{\Omega}\subset\R^d$ in the following unless specified.
\begin{enumerate}[($f$1)]
	\item $f(\x,\xi)$ is locally Lipschitz on $\bar{\Omega}\times\R$ and $f(\x,\xi)=o(|\xi|)$ as $\xi\to0$;
	\item there are constants $c_1,c_2>0$ such that $|f(\x,\xi)|\leq c_1+c_2|\xi|^s$, where $s$ satisfies $1<s<2^*-1$ with $2^*:=2d/(d-2)$ if $d>2$ and $2^*:=\infty$ if $d=1,2$;
	\item there are constants $\mu>2$ and $R>0$ such that for $|\xi|\geq R$, $0<\mu F(\x,\xi)\leq f(\x,\xi)\xi$, where $F(\x,u)=\int_0^uf(\x,\xi)d\xi$;
	\item $f(\x,\xi)/|\xi|$ is increasing w.r.t. $\xi$ on $\R\backslash\{0\}$.
\end{enumerate}

The energy functional associated to the BVP \eqref{eq:slpde} is
\[ E(u)=\int_{\Omega}\left(\frac12|\nabla u|^2-F(\x,u)\right) d\x,\quad u\in X, \]
where $X:=H_0^1(\Omega)$ is equipped with the inner product and norm as
\[ (u, v)=\int_{\Omega}\nabla u\cdot\nabla vd\x,\quad \|u\|=\sqrt{(u,u)},\quad\forall \,u, v\in X. \]
According to \cite{LZ2001SISC,Rabinowitz1986}, the following facts are true:
\begin{enumerate}[(1)]
	\item Under hypotheses $(f1)$-$(f3)$, $E\in C^1(X,\R)$ and satisfies the (PS) condition. Further, any critical point of $E$ is a weak solution and also a classical solution of the BVP \eqref{eq:slpde}. Moreover, $E$ has a mountain pass structure with a unique local minimizer, i.e., $u=0$. Therefore, for any finite-dimensional closed subspace $L$ of $X$, the peak mapping $P(v)$ of $E$ w.r.t. $L$ at each $v\in S$ is nonempty.
	\item Under hypotheses $(f1)$-$(f4)$, for any finite-dimensional closed subspace $L$  of $X$, the uniqueness of a local peak selection $p(v)$ of $E$ w.r.t. $L$ implies its continuity at $v$. For the case of $L=\{0\}$, there is only one peak selection $p(v)$ of $E$ w.r.t. $L$ at any $v\in S$ and then $p$ is continuous on $S$. Moreover, in this case, there exists a constant $\delta>0$ such that $\mathrm{dist}(p(v),L)=\|p(v)\|\geq\delta>0$, $\forall\,v\in S$.
\end{enumerate}

It is clear that functions of the form $f(\x,\xi)=|\xi|^{\gamma-1}\xi$ with $1<\gamma<2^*-1$ satisfy hypotheses $(f1)$-$(f4)$, and so do all positive linear combinations of such functions. A typical example is that $f(\x, u)=|\x|^\ell u^3$, which leads to the H\'{e}non equation as
\begin{equation}\label{eq:henon}
	-\Delta u=|\x|^\ell u^3 \quad \mbox{in }\Omega, \qquad
	u=0\quad \mbox{on }\partial\Omega,
\end{equation}
where $\ell$ is a nonnegative parameter. The equation \eqref{eq:henon} was introduced by H\'{e}non \cite{H1973AA} when he studied rotating stellar structures. If $\ell=0$, this equation is also called the Lane-Emden equation.

By \eqref{eq:gk} and a simple calculation, the gradient $g_k\in X=H_0^1(\Omega)$ of $E$ at an iterative point $w_k=p(v_k)$ can be expressed as $g_k=w_k-\phi_k$ with $\phi_k$ the weak solution to the linear BVP
\[
-\Delta \phi_k = f(\x,w_k)\quad \mbox{in }\Omega,\quad
\phi_k=0\quad\mbox{on }\partial\Omega.
\]
Thus, the main cost for computing the gradient $g_k$ is solving the Poisson equation. For $d=2$, in our MATLAB code, {\ttfamily assempde}, a finite element subroutine provided by the MATLAB PDE Toolbox, is implemented with 32768 triangular elements to handle this task. In addition, the initial ascent direction $v_0$ is taken as the normalization of the solution to the following Poisson equation
\begin{equation}\label{eq:poisson-v0}
	-\Delta \tilde{v}_0=\mathbf{1}_{\Omega_1}-\mathbf{1}_{\Omega_2}\quad\mbox{in }\Omega,\qquad \tilde{v}_0=0\quad\mbox{on }\partial\Omega,
\end{equation}
where $\mathbf{1}_.=\mathbf{1}_.(\x)$ is the indicator function and $\Omega_1,\Omega_2$ are two disjoint subdomains of $\Omega$ for controlling the convexity of $v_0$. The stopping criterion for all examples below is set as $\|g_k\|<10^{-5}$ and $\max_{\x\in\Omega}\big|\Delta w_k(\x)+f(\x,w_k(\x))\big|<5\times10^{-5}$.

We remark here that examples and profiles of all solutions in this subsection are only shown on a square in $\R^2$. However, our approach is also available and efficient for different domains such as a ball, dumbbell or other complex domains.

\def\sfigw{.19\textwidth}
\def\sfigh{.15\textwidth}
\begin{figure}[!t]
	\footnotesize
	\makebox[\sfigw]{$u_1$}
	\makebox[\sfigw]{$u_2$}
	\makebox[\sfigw]{$u_3$}
	\makebox[\sfigw]{$u_4$}
	\makebox[\sfigw]{$u_5$} \\
	\hspace*{2ex}
	\includegraphics[width=\sfigw,height=\sfigh]{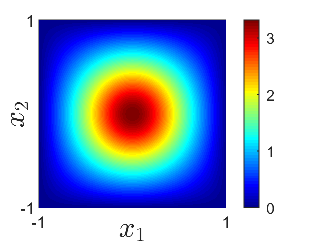}
	\includegraphics[width=\sfigw,height=\sfigh]{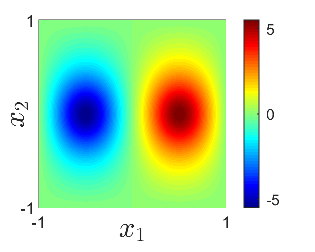}
	\includegraphics[width=\sfigw,height=\sfigh]{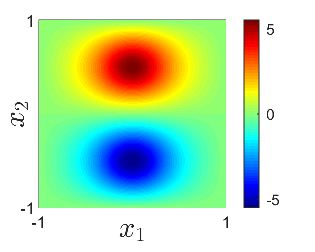}
	\includegraphics[width=\sfigw,height=\sfigh]{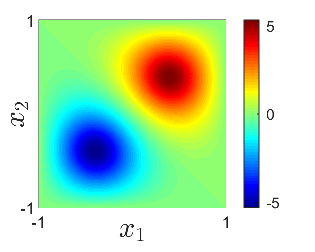}
	\includegraphics[width=\sfigw,height=\sfigh]{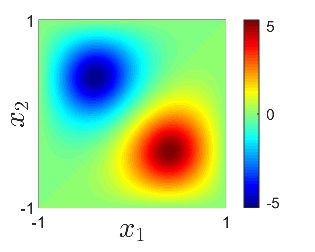} \\
	\makebox[\sfigw]{$u_6$}
	\makebox[\sfigw]{$u_7$}
	\makebox[\sfigw]{$u_8$}
	\makebox[\sfigw]{$u_9$}
	\makebox[\sfigw]{$u_{10}$} \\
	\hspace*{2ex}
	\includegraphics[width=\sfigw,height=\sfigh]{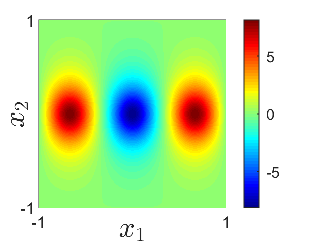}
	\includegraphics[width=\sfigw,height=\sfigh]{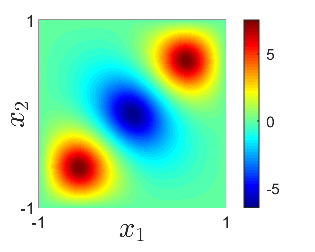}
	\includegraphics[width=\sfigw,height=\sfigh]{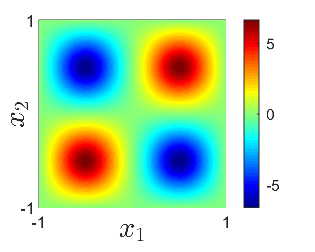}
	\includegraphics[width=\sfigw,height=\sfigh]{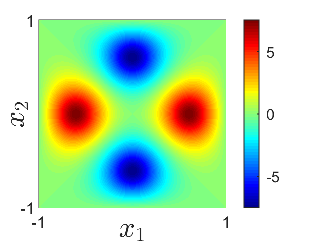}
	\includegraphics[width=\sfigw,height=\sfigh]{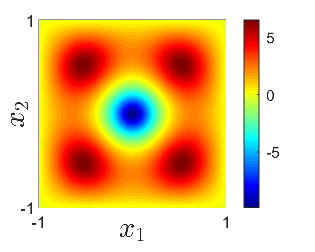} \\
	\vspace{-10pt}
	\caption{Profiles of ten solutions of the Lane-Emden equation on $\Omega=(-1,1)^2$.}
	\label{fig:LE10sols}
\end{figure}
\begin{table}[!t]
	\centering
	\small
	\caption{The initial information and energy functional value for each solution in Fig.~\ref{fig:LE10sols}.}
	\label{tab:lesq-init}
	\begin{tabular}{|c|l|l|l|r|}
		\hline
		$u_n$ & \quad$L$ & \quad$\Omega_1$ & ~~$\Omega_2$ & $E(u_n)$~ \\  \hline
		$u_1$ & $\{0\}$ & $\Omega$ & $\varnothing$ & 9.4460 \\  \hline
		$u_2$ & $[u_1]$ & $\Omega\cap\{x_1>0\}$ & $\Omega\backslash\Omega_1$ & 53.6731 \\  \hline
		$u_3$ & $[u_1]$ & $\Omega\cap\{x_2>0\}$ & $\Omega\backslash\Omega_1$ & 53.6731 \\  \hline
		$u_4$ & $[u_1]$ & $\Omega\cap\{x_1+x_2>0\}$ & $\Omega\backslash\Omega_1$ & 48.8807 \\  \hline
		$u_5$ & $[u_1]$ & $\Omega\cap\{x_1-x_2>0\}$ & $\Omega\backslash\Omega_1$ & 48.8807 \\  \hline
		$u_6$ & $[u_1,u_2]$ & $\Omega\cap\{|x_1|>0.2\}$ & $\Omega\backslash\Omega_1$ & 178.0269 \\  \hline
		$u_7$ & $[u_1,u_4]$ & $\Omega\cap\{|x_1+x_2|>0.3\}$ & $\Omega\backslash\Omega_1$ & 135.6335 \\  \hline
		$u_8$ & $[u_1,u_2,u_3]$ & $\Omega\cap\{x_1x_2>0\}$ & $\Omega\backslash\Omega_1$ & 151.3864 \\  \hline
		$u_9$ & $[u_1,u_4,u_5]$ & $\Omega\cap\{|x_1|>|x_2|\}$ & $\Omega\backslash\Omega_1$ & 195.7620 \\  \hline
		$u_{10}$ & $[u_1,u_2,u_3,u_8]$ & $\Omega\cap\{x_1^2+x_2^2>0.25\}$ & $\Omega\backslash\Omega_1$ & 233.9289 \\  \hline
	\end{tabular}
\end{table}
\begin{example}[Lane-Emden equation]\rm\label{ex:lesq}
	In this example, we employ Algorithm~\ref{alg:gbblmm} to compute a few nontrivial solutions to the Lane-Emden equation on a square, i.e., \eqref{eq:henon} with $\ell=0$, $d=2$ and $\x=(x_1,x_2)\in\Omega=(-1,1)^2$. Limited by the length of the paper, we only profile ten solutions obtained and labeled as $u_1,u_2,\ldots,u_{10}$ in  Fig.~\ref{fig:LE10sols}. For each solution, the information of the corresponding support space $L$, initial ascent direction $v_0$ and its energy functional value are described in Table~\ref{tab:lesq-init} with the notation `$[\cdots]$' denoting the space spanned by functions inside it. It is observed that the solution $u_1$ is a nontrivial positive solution with lowest energy and others are sign-changing solutions with higher energy. In fact, according to Theorem~1 in \cite{L1994MM}, due to $\Omega$ is convex in $\R^2$, $u_1$ is actually the unique positive solution to the Lane-Emden equation. Moreover, the existence of $u_1$ has been proved by the mountain pass lemma in \cite{Rabinowitz1986} and it is called the least-energy solution or the ground state solution.
\end{example}
\begin{table}[!t]
	\centering
	\caption{Numerical comparisons of the GBBLMM with traditional LMMs in terms of the CPU time (in seconds) for computing those solutions in Fig.~\ref{fig:LE10sols} with the shortest time underlined.}
	\label{tab:lesq2-comp}
	\footnotesize
	\begin{tabular}{|l|rrrrrrrr|}
		\hline
		$u$ & Exact & Armijo & BB1~ & PBB1 & BB2~ & PBB2 & ABB~ & APBB \\
		\hline
		$u_1$ &    2.1096 &   3.8458 &   1.3353 &   1.2192 &   1.3089 &   \underline{1.1933} &   1.3728 &   1.2367 \\
		$u_2$ &   15.2261 &   3.2181 &   \underline{1.8661} &   1.8975 &   1.8775 &   2.5628 &   2.0697 &   2.2227 \\
		$u_3$ &   17.7360 &   4.7765 &   2.0224 &   \underline{1.9484} &   2.0085 &   2.3639 &   2.1205 &   2.4500 \\
		$u_4$ &   26.2401 &   4.4908 &   2.7760 &   2.6123 &   \underline{2.4365} &   4.5057 &   2.8080 &   2.9564 \\
		$u_5$ &   25.7855 &   4.1724 &   4.8276 &   2.5380 &   \underline{2.3958} &   3.7228 &   3.0434 &   4.9856 \\
		$u_6$ &   95.2901 &  18.2996 &   6.8527 &   7.0348 &   \underline{6.2970} &   8.1687 &   6.4826 &   6.8969 \\
		$u_7$ &  106.4799 &  15.1870 &   4.1001 &   \underline{3.9299} &   5.0760 &  10.5012 &   5.1192 &   7.2954 \\
		$u_8$ &   53.2211 &  12.0485 &   4.2981 &   4.2186 &   4.0111 &   2.9897 &   4.0687 &   \underline{2.7712} \\
		$u_9$ &   87.9478 &   9.6975 &   5.4521 &   \underline{3.7206} &   6.2267 &   6.1504 &   6.8100 &   5.2849 \\
		$u_{10}$ &  174.1314 &  19.8673 &   9.8359 &   \underline{8.0634} &   8.9516 &   8.1358 &  11.0005 &   9.1701 \\
		\hline
	\end{tabular}
\end{table}

\def\sfigw{.38\textwidth}
\def\sfigh{.16\textheight}
\begin{figure}[!t]
	\centering
	\footnotesize
	\makebox[0.03\textwidth][r]{(a)}\includegraphics[width=\sfigw,height=\sfigh]{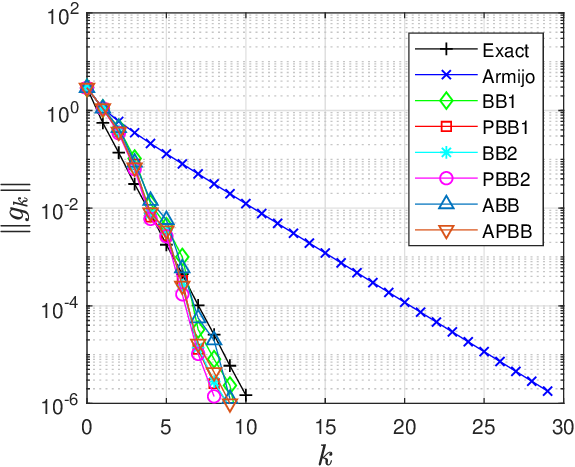} \qquad
	\makebox[0.03\textwidth][r]{(b)}\includegraphics[width=\sfigw,height=\sfigh]{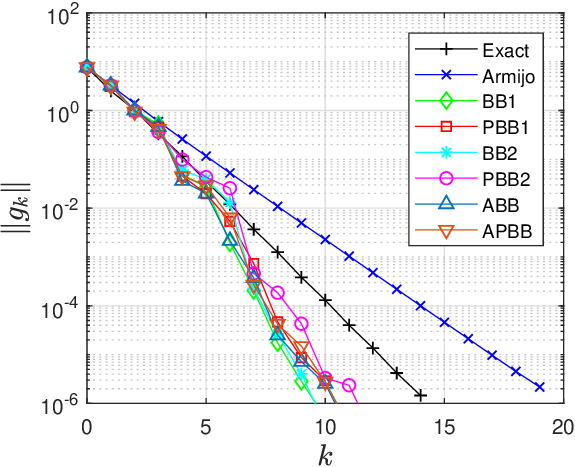} \\
	\makebox[0.03\textwidth][r]{(c)}\includegraphics[width=\sfigw,height=\sfigh]{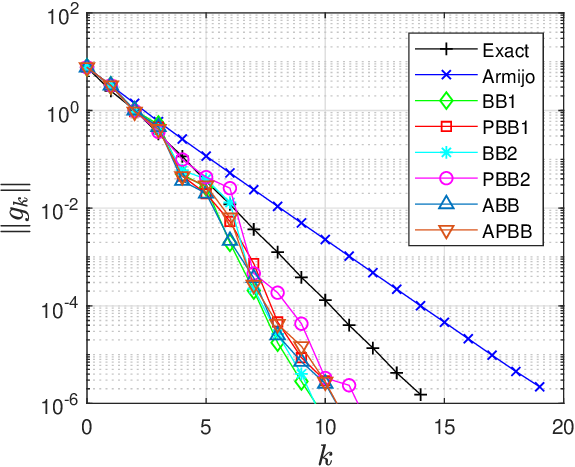} \qquad
	\makebox[0.03\textwidth][r]{(d)}\includegraphics[width=\sfigw,height=\sfigh]{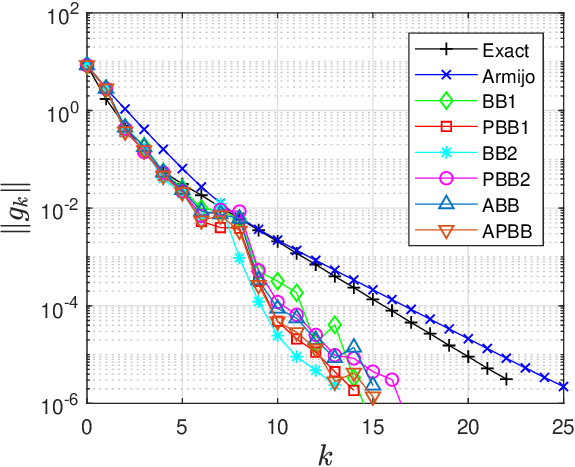} \\
	\makebox[0.03\textwidth][r]{(e)}\includegraphics[width=\sfigw,height=\sfigh]{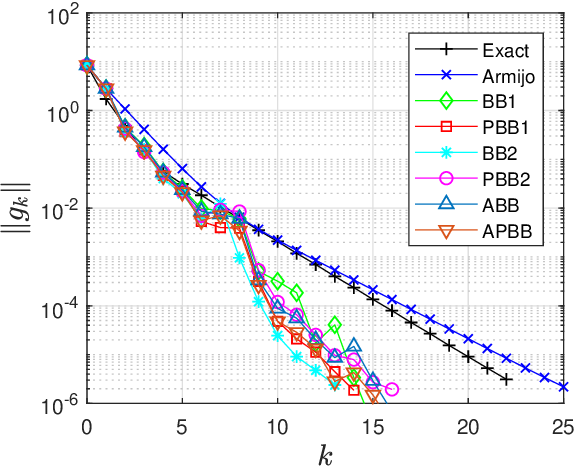} \qquad
	\makebox[0.03\textwidth][r]{(f)}\includegraphics[width=\sfigw,height=\sfigh]{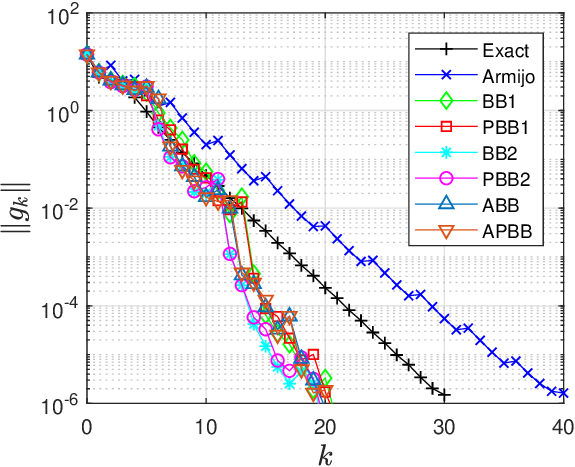} \\
	\makebox[0.03\textwidth][r]{(g)}\includegraphics[width=\sfigw,height=\sfigh]{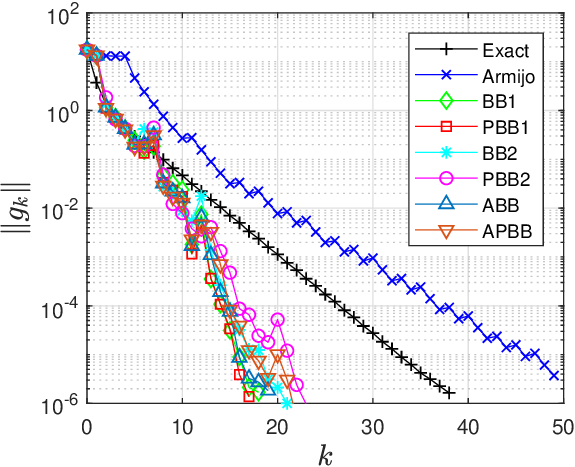} \qquad
	\makebox[0.03\textwidth][r]{(h)}\includegraphics[width=\sfigw,height=\sfigh]{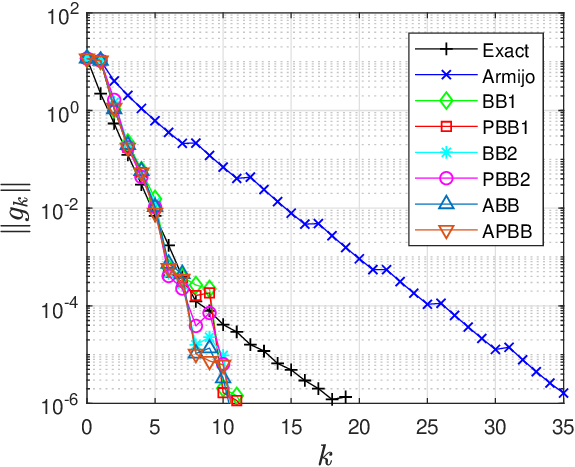} \\
	\makebox[0.03\textwidth][r]{(i)}\includegraphics[width=\sfigw,height=\sfigh]{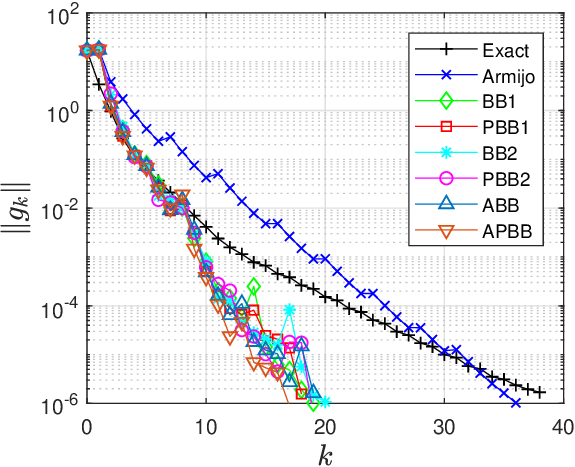} \qquad
	\makebox[0.03\textwidth][r]{(j)}\includegraphics[width=\sfigw,height=\sfigh]{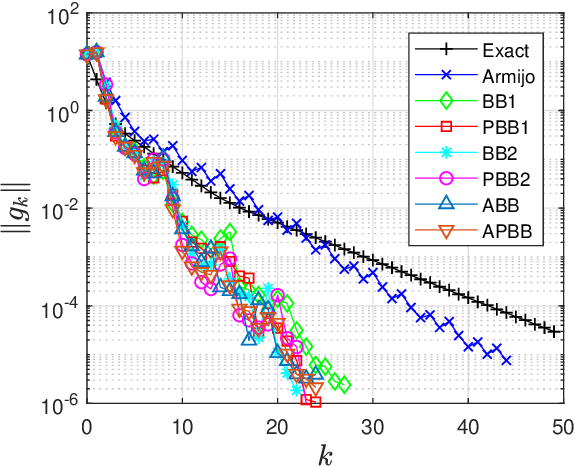} \\
	\caption{Numerical comparison of the GBBLMM with traditional LMMs in terms of the convergence rate for computing solutions in Fig.~\ref{fig:LE10sols}: (a) $\sim$ (j) for $u_1\sim u_{10}$, respectively. The horizontal and vertical coordinates represent the number of iterations and the norm of the gradient, respectively.}
	\label{fig:LE10sols-cvg}
\end{figure}

\begin{figure}[!t]
	\centering
	\footnotesize
	\includegraphics[width=.45\textwidth,height=.19\textheight]{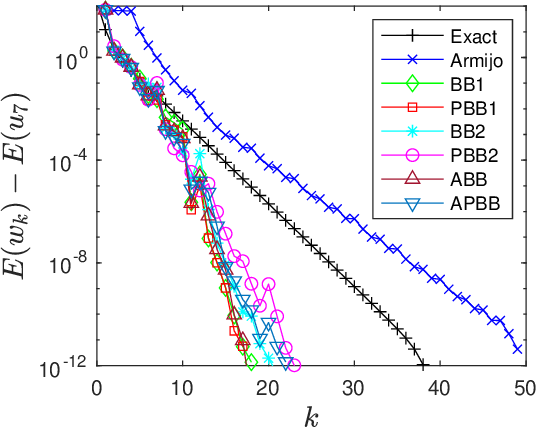} \qquad
	\includegraphics[width=.45\textwidth,height=.19\textheight]{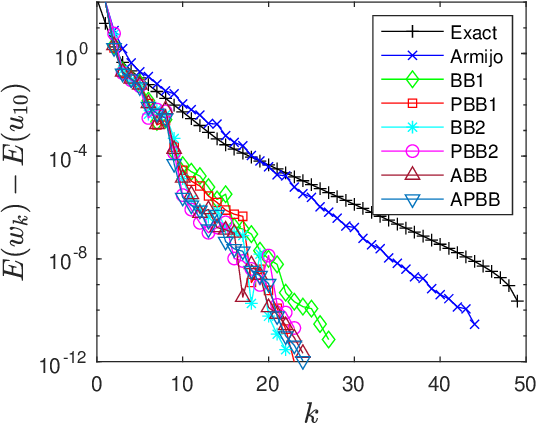} \\
	\caption{The changes of the relative energy functional values $E(w_k)-E(u)$ (in logarithmic scale) with respect to the number of iterations $k$ for the GBBLMM and traditional LMMs for computing the solutions $u=u_7$ (left) and $u=u_{10}$ (right) in Fig.~\ref{fig:LE10sols}.}
	\label{fig:LEu7u10-Ek}
\end{figure}

Then, we compare the efficiency of our GBBLMM with traditional LMMs for computing these nontrivial solutions $u_1,u_2,\ldots,u_{10}$ in Fig.~\ref{fig:LE10sols} with the same initial information stated in Table~\ref{tab:lesq-init}. The cost of the CPU time and number of iterations are exhibited in Table~\ref{tab:lesq2-comp} and Fig.~\ref{fig:LE10sols-cvg}, in which, respectively, `Exact' denotes Algorithm~\ref{alg:lmm} with the exact step-size search rule, i.e., the step-size $\alpha_k$ is chosen such that
\begin{equation*}
E(p(v_k(\alpha_k)))=\min_{0<\alpha\leq\lambda_{\max}}E(p(v_k(\alpha)));
\end{equation*}
`Armijo' denotes Algorithm~\ref{alg:lmm} with the normalized Armijo-type step-size search rule given in \eqref{eq:ak-armijo}. `BB1' (or `BB2') denotes Algorithm~\ref{alg:gbblmm} with $\lambda_k$ ($k\geq1$) defined in \eqref{eq:lambdak-bb} with $\alpha_k^{\text{BB}}=\alpha_k^{\text{BB1}}$ (or $\alpha_k^{\text{BB}}=\alpha_k^{\text{BB2}}$). `PBB1' (or `PBB2') denotes Algorithm~\ref{alg:gbblmm} with $\lambda_k$ ($k\geq1$) defined in \eqref{eq:lambdak-pbb} with $\alpha_k^{\text{PBB}}=\alpha_k^{\text{PBB1}}$ (or $\alpha_k^{\text{PBB}}=\alpha_k^{\text{PBB2}}$). `ABB' denotes Algorithm~\ref{alg:gbblmm} with $\lambda_k$ ($k\geq1$) defined in \eqref{eq:lambdak-bb} with $\alpha_k^{\text{BB}}=\alpha_k^{\text{BB1}}$ if $k$ is odd and $\alpha_k^{\text{BB}}=\alpha_k^{\text{BB2}}$ if $k$ is even. `APBB' denotes Algorithm~\ref{alg:gbblmm} with $\lambda_k$ ($k\geq1$) defined in \eqref{eq:lambdak-pbb} with $\alpha_k^{\text{BB}}=\alpha_k^{\text{PBB1}}$ if $k$ is odd and $\alpha_k^{\text{PBB}}=\alpha_k^{\text{PBB2}}$ if $k$ is even. Further, Fig.~\ref{fig:LEu7u10-Ek} plots the changes of energy functional values during the iterations of different algorithms for computing the solutions $u_7$ and $u_{10}$ (results for other solutions are similar and omitted here for brevity).

From Table~\ref{tab:lesq2-comp}, Figs.~\ref{fig:LE10sols-cvg}-\ref{fig:LEu7u10-Ek} and additional results not shown here, it is observed that our GBBLMM is quite efficient with less iterations and CPU time for solving the Lane-Emden equation, compared with the LMM using the exact step-size search rule or normalized Armijo-type step-size search rule. Moreover, for different choices of BB-type step-sizes, the corresponding algorithms of the GBBLMM have the similar efficiency. In addition, as shown in Fig.~\ref{fig:LEu7u10-Ek}, the energy functional values decrease monotonically during the iterations of `Armijo' and `Exact', with occasional growths for some of the GBBLMM algorithms.

Finally, motivated by the adaptive BB methods in optimization theory in Euclidean spaces \cite{DHL2019COA,HDL2021SIOPT}, we test the GBBLMM (Algorithm~\ref{alg:gbblmm}) with two adaptive strategies of BB-type step-sizes described in Appendix~\ref{app:adapBB}, denoted as `Adap1' and `Adap2'. In order to confirm that the BB-type step-size is the principal ingredient for the performance improvement against the normalized Armijo-type LMM method (i.e., `Armijo' in Table~\ref{tab:lesq2-comp}), we also test the nonmonotone ZH-type LMM (Algorithm~\ref{alg:zhlmm}) with the constant trial step-size $\lambda_k\equiv\lambda=0.1$ (as the same in `Armijo'), denoted as `ZH($\lambda$)'. The numerical comparisons of different LMMs in terms of the number of iterations and CPU time (in seconds) for computing the solutions $u_1\sim u_5$ in Fig.~\ref{fig:LE10sols} are presented in Table~\ref{tab:lesq2-comp-add} with `ABB' the same notation as in Table~\ref{tab:lesq2-comp}. The numerical results in Table~\ref{tab:lesq2-comp-add} further support our claims mentioned above and suggest that the adaptive strategies of BB-type step-sizes have the potential to further improve the efficiency of the GBBLMM.

\begin{table}[!t]
\centering
\caption{Numerical comparisons of different nonmonotone LMMs with the normalized Armijo-type LMM in terms of the number of iterations and CPU time (in seconds) for computing the first five solutions in Fig.~\ref{fig:LE10sols}. (The numerical experiments in this table are implemented with MATLAB (R2020b) under the PC with the Intel Core i7 CPU (2.7 GHz) and a 8.00 GB RAM.)}
	\label{tab:lesq2-comp-add}
\footnotesize
\begin{tabular}{|c|cr|cr|cr|cr|cr|}
	\hline
	\multirow{2}{*}{$u$} 
	& \multicolumn{2}{|c|}{ABB} & \multicolumn{2}{|c|}{Adap1} & \multicolumn{2}{|c|}{Adap2} 
	& \multicolumn{2}{|c|}{ZH($\lambda$)} & \multicolumn{2}{|c|}{Armijo} \\
	\cline{2-11}
	& \#its & time & \#its & time & \#its & time & \#its & time & \#its & time \\
	\hline
	$u_{1}$  &  9 & 0.58  & 10 & 0.73  &  9 & 0.62  & 29 & 1.80  & 29 & 1.87 \\
	$u_{2}$  & 11 & 0.93  & 10 & 0.92  & 10 & 0.91  & 19 & 1.49  & 19 & 1.43 \\
	$u_{3}$  & 11 & 0.92  & 10 & 0.82  & 10 & 0.93  & 19 & 1.49  & 19 & 1.44 \\
	$u_{4}$  & 15 & 1.23  & 15 & 1.20  & 13 & 1.11  & 25 & 1.91  & 25 & 1.72 \\
	$u_{5}$  & 15 & 1.22  & 15 & 1.20  & 13 & 1.17  & 25 & 1.92  & 25 & 1.72 \\
	\hline
\end{tabular}
\end{table}

\begin{example}[H\'{e}non equation]\rm\label{ex:hensq}
	Now, we employ Algorithm~\ref{alg:gbblmm} to compute a few nontrivial solutions to the H\'{e}non equation \eqref{eq:henon} on $\Omega=(-1,1)^2$. First, for different $\ell\geq0$, we compute the ground state solution by taking $L=\{0\}$ and $v_0$ according to \eqref{eq:poisson-v0} with $\Omega_1=\{\x=(x_1,x_2)\in\Omega:x_1>0,x_2>0\}$ and $\Omega_2=\varnothing$. The profiles of corresponding ground state solutions with different $\ell$ are presented in Fig.~\ref{fig:hensq-gs}.

	From Fig.~\ref{fig:hensq-gs} and other numerical results in various domain not shown here, one can numerically observe that, when $\ell$ is close to zero (approximately, $\ell\leq0.5$), the ground state solution is symmetric and attains its maximum value at the center of the domain; when $\ell$ is large (approximately, $\ell\geq0.6$), the maximizer of the ground state solution is gradually away from the center of the domain, i.e., the symmetry-breaking occurs. This similar interesting phenomenon for the H\'{e}non equation on the unit ball was first numerically observed in \cite{CZN2000IJBC} and then theoretically verified in \cite{SWS2002CCM}. To the best of our knowledge, the rigorous analysis for the exact critical value of $\ell$ that determines whether symmetry-breaking occurs for the H\'{e}non equation on other domains besides the unit ball is still an open problem. It should be one of interesting issues considered in our future work.
\end{example}

\def\sfigw{.19\textwidth}
\def\sfigh{.15\textwidth}
\begin{figure}[!t]
	\footnotesize
	\makebox[\sfigw]{$\ell=0$}
	\makebox[\sfigw]{$\ell=0.1$}
	\makebox[\sfigw]{$\ell=0.2$}
	\makebox[\sfigw]{$\ell=0.3$}
	\makebox[\sfigw]{$\ell=0.4$} \\
	\hspace*{2ex}
	\includegraphics[width=\sfigw,height=\sfigh]{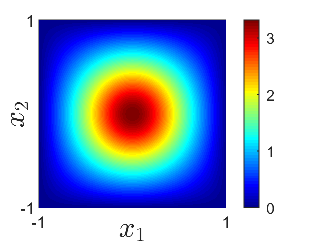}
	\includegraphics[width=\sfigw,height=\sfigh]{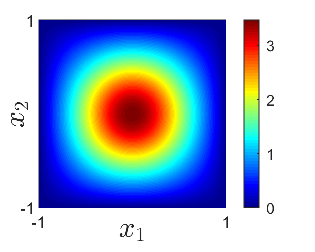}
	\includegraphics[width=\sfigw,height=\sfigh]{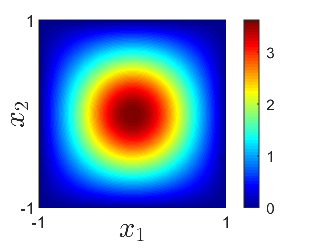}
	\includegraphics[width=\sfigw,height=\sfigh]{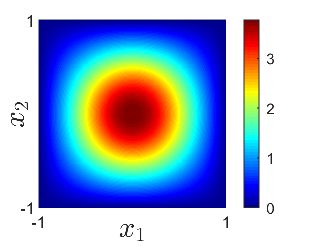}
	\includegraphics[width=\sfigw,height=\sfigh]{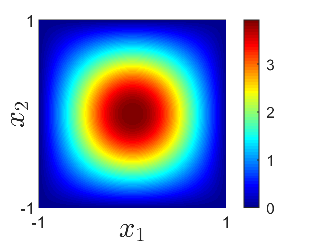} \\
	\makebox[\sfigw]{$\ell=0.5$}
	\makebox[\sfigw]{$\ell=0.6$}
	\makebox[\sfigw]{$\ell=0.7$}
	\makebox[\sfigw]{$\ell=0.8$}
	\makebox[\sfigw]{$\ell=0.9$} \\
	\hspace*{2ex}
	\includegraphics[width=\sfigw,height=\sfigh]{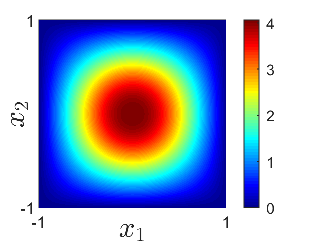}
	\includegraphics[width=\sfigw,height=\sfigh]{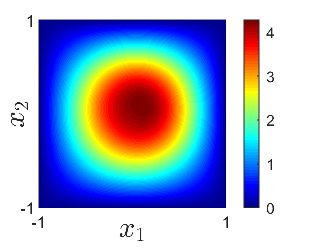}
	\includegraphics[width=\sfigw,height=\sfigh]{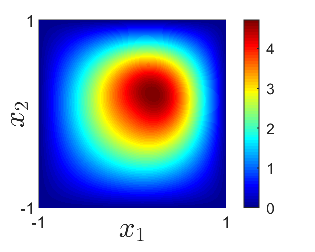}
	\includegraphics[width=\sfigw,height=\sfigh]{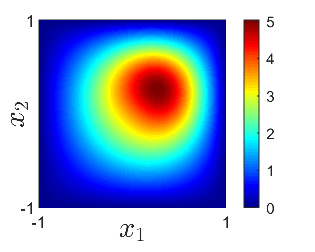}
	\includegraphics[width=\sfigw,height=\sfigh]{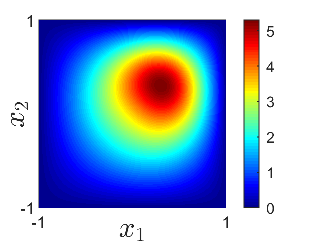} \\
	\makebox[\sfigw]{$\ell=1$}
	\makebox[\sfigw]{$\ell=2$}
	\makebox[\sfigw]{$\ell=3$}
	\makebox[\sfigw]{$\ell=4$}
	\makebox[\sfigw]{$\ell=5$} \\
	\hspace*{2ex}
	\includegraphics[width=\sfigw,height=\sfigh]{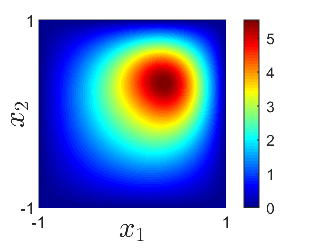}
	\includegraphics[width=\sfigw,height=\sfigh]{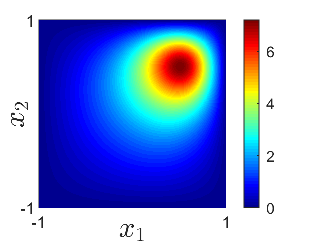}
	\includegraphics[width=\sfigw,height=\sfigh]{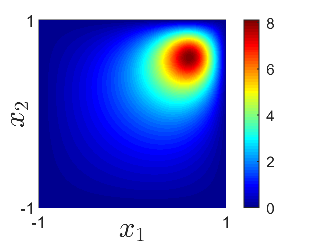}
	\includegraphics[width=\sfigw,height=\sfigh]{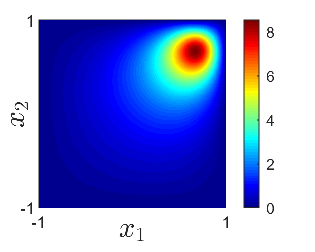}
	\includegraphics[width=\sfigw,height=\sfigh]{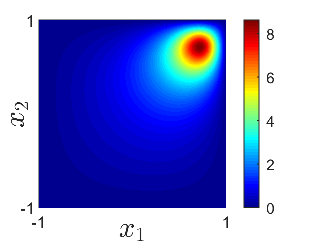} \\
	\vspace{-10pt}
	\caption{Profiles of ground state solutions of the H\'{e}non equation on $\Omega=(-1,1)^2$ with different $\ell s$.}
	\label{fig:hensq-gs}
\end{figure}
\def\sfigw{.24\textwidth}
\def\sfigh{.172\textwidth}
\begin{figure}[!t]
	\footnotesize
	\makebox[\sfigw]{$u_1$}
	\makebox[\sfigw]{$u_2$}
	\makebox[\sfigw]{$u_3$}
	\makebox[\sfigw]{$u_4$} \\
	\hspace*{2ex}
	\includegraphics[width=\sfigw,height=\sfigh]{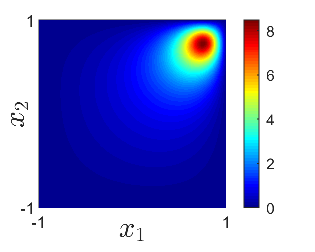}
	\includegraphics[width=\sfigw,height=\sfigh]{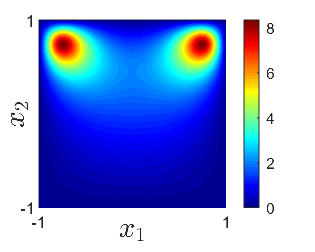}
	\includegraphics[width=\sfigw,height=\sfigh]{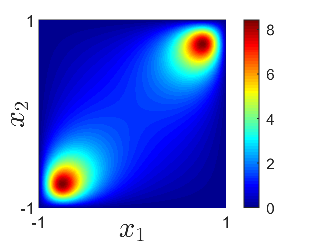}
	\includegraphics[width=\sfigw,height=\sfigh]{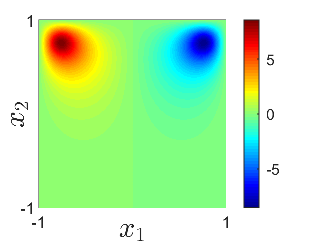} \\
	\makebox[\sfigw]{$u_5$}
	\makebox[\sfigw]{$u_6$}
	\makebox[\sfigw]{$u_7$}
	\makebox[\sfigw]{$u_8$} \\
	\hspace*{2ex}
	\includegraphics[width=\sfigw,height=\sfigh]{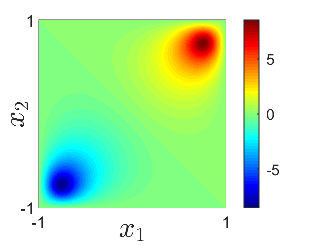}
	\includegraphics[width=\sfigw,height=\sfigh]{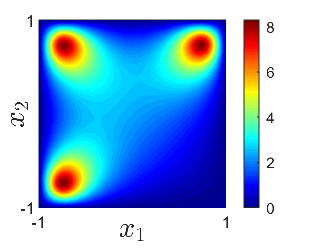}
	\includegraphics[width=\sfigw,height=\sfigh]{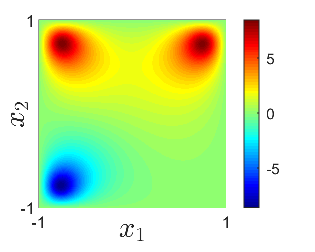}
	\includegraphics[width=\sfigw,height=\sfigh]{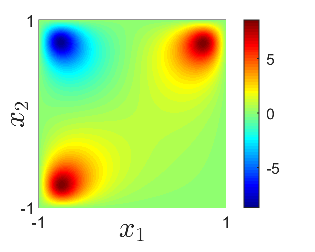} \\
	\makebox[\sfigw]{$u_9$}
	\makebox[\sfigw]{$u_{10}$}
	\makebox[\sfigw]{$u_{11}$}
	\makebox[\sfigw]{$u_{12}$} \\
	\hspace*{2ex}
	\includegraphics[width=\sfigw,height=\sfigh]{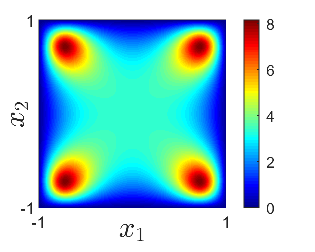}
	\includegraphics[width=\sfigw,height=\sfigh]{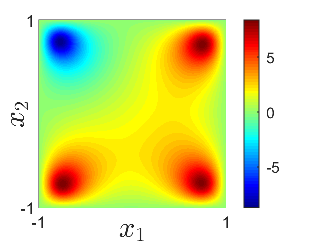}
	\includegraphics[width=\sfigw,height=\sfigh]{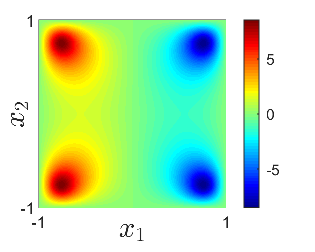}
	\includegraphics[width=\sfigw,height=\sfigh]{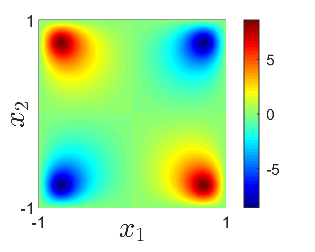} \\
	\vspace{-10pt}
	\caption{Profiles of twelve solutions of the H\'{e}non equation with $\ell=6$ on $\Omega=(-1,1)^2$.}
	\label{fig:hensq12sols}
\end{figure}
\begin{table}[!t]
	\centering
	\small
	\caption{The initial information and energy functional value for each solution in Fig.~\ref{fig:hensq12sols}.}
	\label{tab:hensq-init}
	\begin{tabular}{|c|l|l|l|r|}
		\hline
		$u_n$ & \quad$L$ & \quad$\Omega_1$ & \quad$\Omega_2$ & $E(u_n)$~ \\  \hline
		$u_1$ & $\{0\}$ & $\Omega\cap\{x_1>0,x_2>0\}$ & $\varnothing$ & 61.9634 \\  \hline
		$u_2$ & $[u_1]$ & $\Omega\cap\{x_1<0,x_2>0\}$ & $\varnothing$ & 120.7887 \\  \hline
		$u_3$ & $[u_1]$ & $\Omega\cap\{x_1<0,x_2<0\}$ & $\varnothing$ & 122.4078 \\  \hline
		$u_4$ & $[u_1]$ & $\Omega\cap\{x_2>0\}$ & $\varnothing$ & 126.6988 \\  \hline
		$u_5$ & $[u_1]$ & $\Omega\cap\{x_1>0,x_2>0\}$ & $\Omega\cap\{x_1<0,x_2<0\}$ & 125.3561 \\  \hline
		$u_6$ & $[u_1,u_2]$ & $\Omega\cap\{x_1<0,x_2<0\}$ & $\varnothing$ & 177.6068 \\  \hline
		$u_7$ & $[u_1,u_3]$ & $\Omega\cap\{x_2>0\}$ & $\varnothing$ & 187.1379 \\  \hline
		$u_8$ & $[u_1,u_4]$ & $\Omega\cap\{x_1<0,x_2<0\}$ & $\varnothing$ & 189.9406 \\  \hline
		$u_9$ & $[u_1,u_2,u_6]$ & $\Omega\cap\{x_1>0,x_2<0\}$ & $\varnothing$ & 230.0141 \\  \hline
		$u_{10}$ & $[u_1,u_2,u_6]$ & $\Omega\cap\{x_2<0\}$ & $\Omega\cap\{x_2>0\}$ & 247.0220 \\  \hline
		$u_{11}$ & $[u_1,u_2,u_6]$ & $\Omega\cap\{x_1x_2>0\}$ & $\Omega\cap\{x_1x_2<0\}$ & 250.6746 \\  \hline
		$u_{12}$ & $[u_1,u_2,u_6]$ & $\Omega\cap\{x_1x_2<0\}$ & $\varnothing$ & 255.9728 \\  \hline
	\end{tabular}
\end{table}

Then, taking $\ell=6$, we profile twelve solutions obtained and labeled as $u_1,u_2,\ldots,$ $u_{12}$ in Fig.~\ref{fig:hensq12sols}.  For each solution, the information of the corresponding support space $L$, initial ascent direction $v_0$ and its energy functional value is listed in Table~\ref{tab:hensq-init}. It is observed that $u_1$, $u_2$, $u_3$, $u_6$ and $u_9$ are five positive solutions and others are sign-changing solutions. Distinguished from the case of $\ell=0$ (see the Lane-Emden equation in Example~\ref{ex:lesq}), the positive solution is no longer unique and more nontrivial solutions spring up. The multiplicity of positive solutions for large $\ell$ is also numerically observed and theoretically analyzed in some literature; see, e.g., \cite{CZN2000IJBC,LZ2002SISC,SWS2002CCM,YLZ2008SCSA}. In addition, our approach is also compared with traditional LMMs for the H\'{e}non equation with the significant superiority in the performance, which is similar as that in Example~\ref{ex:lesq} and skipped here due to the limit of the length.

\subsection{Elliptic PDEs with nonlinear boundary conditions}
Consider the following BVP
\begin{equation}\label{eq:nlbc-model}
	-\Delta u+au=0 \quad \mbox{in }\Omega, \qquad
	\frac{\partial u}{\partial\mathbf{n}}=q(\x,u) \quad \mbox{on }\partial\Omega,
\end{equation}
where $\Omega\subset \R^d$ is a bounded open domain with a Lipschitz boundary $\partial\Omega$, the constant $a>0$, $\mathbf{n}=\mathbf{n}(\x)$ denotes the unit outward normal vector to $\partial\Omega$ at $\x$, and the nonlinear function $q(\x, \xi)$ satisfies the following regularity and growth hypotheses \cite{LWZ2013JSC}:
\begin{enumerate}[($q$1)]
	\item $q(\x,\xi)\in C^1(\partial\Omega\times\R,\R)$ and $q(\x,0)=\partial_{\xi}q(\x,\xi)|_{\xi=0}=0$, $\forall \,\x\in\partial\Omega$;
	\item there are constants $c_1,c_2>0$ such that $|q(\x,\xi)|\leq c_1+c_2|\xi|^s$, $\forall\, \x\in\partial\Omega$, where $s$ satisfies $1<s<d/(d-2)$ for $d>2$ and $1<s<\infty$ for $d=2$;
	\item there are constants $\mu>2$, $R>0$ such that $0\leq\mu Q(\x,\xi)\leq \xi q(\x,\xi)$, $\forall\,|\xi|>R$, $\x\in\partial\Omega$, where $Q(\x,u)=\int_0^uq(\x,\xi)d\xi$;
	\item $\partial_{\xi}q(\x,\xi)>q(\x,\xi)/\xi$, $\forall\, (\x,\xi)\in\partial\Omega\times(\R\backslash\{0\})$.
\end{enumerate}

It is worthwhile to point out that the BVP \eqref{eq:nlbc-model} appears in many scientific fields, such as corrosion/oxidation modeling, metal-insulator or metal-oxide semiconductor systems; see \cite{A2004NFAO,LWZ2013JSC} and references therein. However, there are few studies on the computation of multiple solutions to it.

Define the space
\[
X=\left\{u\in H^1(\Omega):\int_{\Omega}(\nabla u\cdot\nabla v+a uv)d\x=\int_{\partial\Omega}\frac{\partial u}{\partial\mathbf{n}}v ds, \; \forall\, v\in H^1(\Omega)\right\},
\]
which is equipped with the inner product and norm as
\begin{equation}\label{eq:nlbc-ipX}
	(u, v)=\int_{\partial\Omega}\frac{\partial u}{\partial\mathbf{n}}vds,\quad
	\|u\|=\sqrt{(u,u)},\quad \forall\, u, v\in X.
\end{equation}
According to \cite{A2004NFAO,LWZ2013JSC}, $X$ is a Hilbert space and $X=H^{\frac12}(\partial\Omega)$ in the sense of equivalent norms.
In addition, for the inner product in $H^1(\Omega)$ defined as $(u, v)_a=\int_{\Omega}(\nabla u\cdot\nabla v+a uv)d\x$, $\forall\, u, v\in H^1(\Omega)$, $X$ is the $(\cdot,\cdot)_a$-orthogonal complement of $H_0^1(\Omega)$ in $H^1(\Omega)$ and has an $(\cdot,\cdot)_a$-orthogonal basis formed by the Steklov eigenfunctions \cite{A2004NFAO,LWZ2013JSC}.

Clearly, $X$ contains all solutions of the BVP \eqref{eq:nlbc-model} in the weak sense and the energy functional associated to the BVP \eqref{eq:nlbc-model} for $u\in X$ can be written as
\[
E(u) =\frac12\int_{\Omega}\left(|\nabla u|^2+au^2\right)d\x- \int_{\partial\Omega}Q(\x,u)ds
= \int_{\partial\Omega}\left(\frac12\frac{\partial u}{\partial\mathbf{n}}u-Q(\x,u)\right)ds.
\]
Under hypotheses ($q$1) and ($q2$), $E\in C^2(X,\R)$ and satisfies the (PS) condition \cite{LWZ2013JSC,Rabinowitz1986}. If $q$ satisfies hypotheses ($q$1)-($q$3), then the BVP \eqref{eq:nlbc-model} has at least three nontrivial solutions \cite{LWZ2013JSC,W1991AIHPNLA}. If, in addition to hypotheses ($q$1)-($q$3), $q(\x,\xi)$ is odd in $\xi$, the existence of infinitely many solutions to the BVP \eqref{eq:nlbc-model} can be established by following the proof of Theorem~9.12 in \cite{Rabinowitz1986}. Under hypotheses ($q$1)-($q$4), when $L=\{0\}$, the peak selection $p(v)$ is uniquely defined for each $v\in S$ and is $C^1$ \cite{LWZ2013JSC}. Moreover, in this case, there exists a constant $\delta>0$ such that $\mathrm{dist}(p(v),L)=\|p(v)\|\geq\delta>0$, $\forall\,v\in S$; see Proposition~4 in \cite{LWZ2013JSC}.

By the definition of the inner product \eqref{eq:nlbc-ipX}, the gradient $g=\nabla E(u)\in X$ satisfies
\[
\int_{\partial\Omega}\frac{\partial g}{\partial\mathbf{n}}vds
= \langle E'(u),v\rangle
= \frac{d}{d\tau}E(u+\tau v)\Big|_{\tau=0}
= \int_{\partial\Omega}\left(\frac{\partial u}{\partial\mathbf{n}}-q(x,u)\right)vds, \;\; \forall\, v\in X.
\]
Recalling the definition of $X$, it implies that $g\in X$ is the weak solution to the linear elliptic BVP as
\begin{equation}\label{eq:nbc-g-pde}
	-\Delta g+ag=0 \quad \mbox{in }\Omega, \qquad
	\frac{\partial g}{\partial\mathbf{n}}=b \quad \mbox{on }\partial\Omega,
\end{equation}
with $b=b(\x)=\frac{\partial u}{\partial\mathbf{n}}(\x)-q(\x, u(\x))$, $\x\in\partial\Omega$.
In practice, the linear elliptic BVP \eqref{eq:nbc-g-pde} can be solved numerically by finite difference methods, finite element methods and so on. In our experiments, an efficient boundary element method (BEM) \cite{CZ1992,LWZ2013JSC} with 1024 boundary elements is applied to solve it.

We now employ our GBBLMM to solve for multiple solutions of the BVP \eqref{eq:nlbc-model} with $d=2$, $a=1$ and $q(\x, u)=u^3$ for two different domains stated in Example \ref{ex:nlbc_circ} and Example \ref{ex:nlbc_rect}, respectively. It is easy to see that hypotheses ($q$1)-($q$4) are satisfied for this case. In addition, for the following examples, the initial ascent direction $v_0$ is taken as the normalization of
\begin{equation}\label{eq:v0bie}
	\tilde{v}_0(\x)=\int_{\partial\Omega}\Phi(|\x-\y|)\rho_0(\y)ds_{\y},\quad \x\in \Omega,
\end{equation}
for some given function $\rho_0$ defined on $\partial\Omega$, where $\Phi$ is the fundamental solution to the linear elliptic operator $-\Delta+aI$ and given as
\begin{equation}\label{eq:fundsol}
	\Phi(|\x-\y|)= \frac{1}{2\pi}K_0\left(\sqrt{a}|\x-\y|\right),\quad \x, \y\in\Omega,
\end{equation}
with $K_0$ the modified Bessel function of the second kind of order $0$. The stopping criterion is set as $\|g_k\|<10^{-5}$ and $\max_{\x\in\partial\Omega}\left|\frac{\partial w_k}{\partial\mathbf{n}}(\x)-q(\x, w_k(\x))\right|<5\times10^{-5}$.

\def\sfigw{.18\textwidth}
\def\sfigh{.09\textwidth}
\begin{figure}[!t]
	\centering
	\footnotesize
	\makebox[\sfigw]{$u_1$}
	\makebox[\sfigw]{$u_2$}
	\makebox[\sfigw]{$u_3$}
	\makebox[\sfigw]{$u_4$}
	\makebox[\sfigw]{$u_5$} \\
	\quad
	\includegraphics[width=\sfigw]{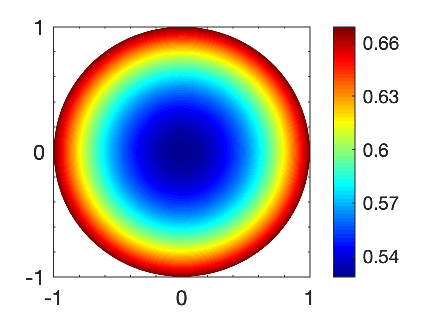}
	\includegraphics[width=\sfigw]{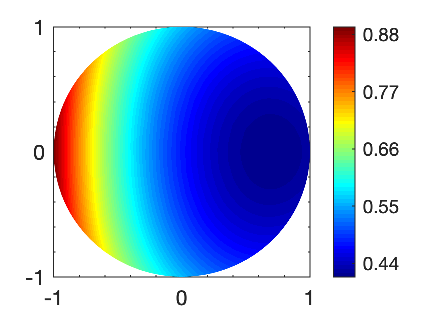}
	\includegraphics[width=\sfigw]{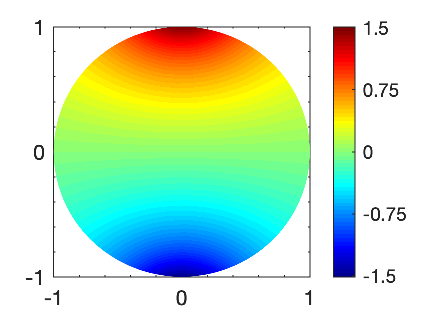}
	\includegraphics[width=\sfigw]{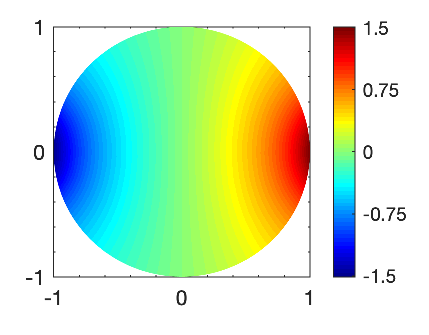}
	\includegraphics[width=\sfigw]{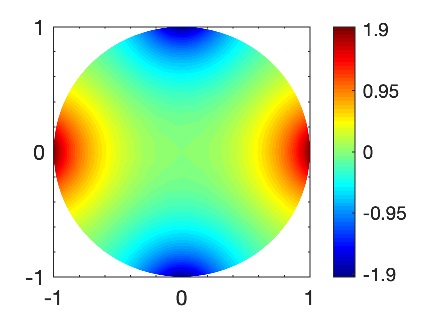} \\
	\resizebox*{\sfigw}{\sfigh}{\includegraphics{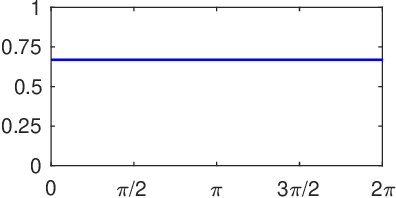}}
	\resizebox*{\sfigw}{\sfigh}{\includegraphics{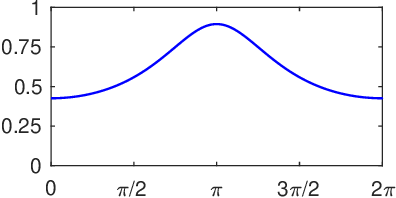}}
	\resizebox*{\sfigw}{\sfigh}{\includegraphics{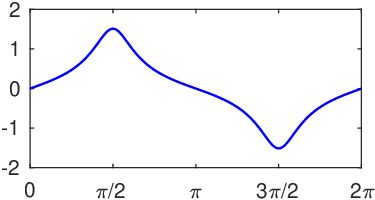}}
	\resizebox*{\sfigw}{\sfigh}{\includegraphics{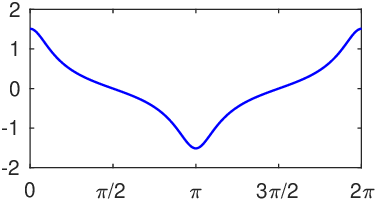}}
	\resizebox*{\sfigw}{\sfigh}{\includegraphics{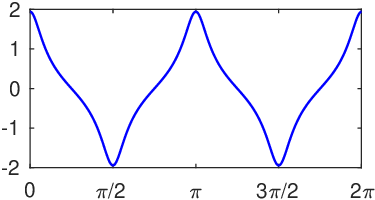}} \\
	\caption{Profiles of five nontrivial solutions $u_1\sim u_5$ in Example~\ref{ex:nlbc_circ} inside the domain (top row) and on the boundary (bottom row). In each subplot in the bottom panel, the horizontal axis represents the value of the boundary parameter $\theta$ ($0\leq\theta\leq 2\pi$) which corresponds to the boundary point $(x_1,x_2)=(\cos\theta,\sin\theta)$ as described in \eqref{ex:nlbc_circ_theta}.}
	\label{fig:nlbc_circ5sols}
\end{figure}

\begin{table}[!t]
	\centering
	\small
	\caption{The initial information and energy functional value for each solution in Example~\ref{ex:nlbc_circ}.}
	\label{tab:nlbc_circle}
	\begin{tabular}{|c|c|c|c|c|c|}
		\hline
		$u_n$ & $u_1$ & $u_2$ & $u_3$ & $u_4$ & $u_5$ \\  \hline
		$L$ & $\{0\}$ & $\{0\}$ & $[u_2]$ & $[u_2]$ & $[u_1,u_3,u_4]$ \\  \hline
		$\rho_0(\x(\theta))$ & $1$ & $1-\cos\theta$ & $\sin\theta$ & $\cos\theta$ & $\cos2\theta$ \\  \hline
		$E(u_n)$ & 0.3148 & 0.3105 & 1.3025 & 1.3025 & 4.1364 \\  \hline
	\end{tabular}
\end{table}

\begin{example}[A circle domain case]\rm\label{ex:nlbc_circ}
	Take the domain $\Omega=\{(x_1,x_2):x_1^2+x_2^2<1\}$ with its boundary $\partial\Omega=\{(x_1,x_2):x_1^2+x_2^2=1\}$ parametrized by
	\begin{equation}\label{ex:nlbc_circ_theta}
		\x=\x(\theta) = (x_1(\theta),x_2(\theta))=(\cos\theta,\sin\theta),\quad \theta\in[0,2\pi].
	\end{equation}
	We show five solutions $u_1,u_2,\ldots,u_5$ obtained in Fig.~\ref{fig:nlbc_circ5sols} with their profiles inside the domain and on the boundary. For each solution, the information of its corresponding support space $L$, initial ascent direction $v_0$ (determined by $\rho_0$ via \eqref{eq:v0bie}) and energy functional value is listed in Table~\ref{tab:nlbc_circle}. It is noted that the boundary value of the solution $u_1$ is a constant, approximately to $0.6691$. Actually, the algorithm for computing $u_1$ needs only one iteration. Compared the efficiency of our GBBLMM with that of traditional LMMs, our approach can be observed to perform much better with less iterations and CPU time for the BVP \eqref{eq:nlbc-model} on a circle domain. The relevant details are omitted here due to the length limitation.
\end{example}

\begin{example}[A square domain case]\rm\label{ex:nlbc_rect}
	Set $\Omega=(-1,1)^2$ and the boundary $\partial\Omega$ is parametrized starting from the point $\x=(-1,-1)$ by a scaled arc-length in the counterclockwise direction:
	\begin{equation}\label{eq:nlbc_rect_theta}
		\x=\x(\theta)=(x_1(\theta),x_2(\theta))=
		\begin{cases}
			(4\theta/\pi-1,-1), & 0\leq\theta<\pi/2, \\
			(1,4\theta/\pi-3), & \pi/2\leq\theta<\pi, \\
			(5-4\theta/\pi,1), & \pi\leq\theta<3\pi/2, \\
			(-1,7-4\theta/\pi), & 3\pi/2\leq\theta\leq2\pi. \\
		\end{cases}
	\end{equation}
	In this case, more nontrivial solutions spring up. We simply show ten solutions obtained in Fig.~\ref{fig:nlbc_rect_u1-10} with their profiles inside the domain and on the boundary. For each solution, the information of its corresponding support space $L$, initial ascent direction $v_0$ (determined by $\rho_0$ via \eqref{eq:v0bie}) and energy functional value is listed in Table~\ref{tab:nlbc_rect}. It is observed that $u_1\sim u_5$ are positive solutions and others are sign-changing solutions.
\end{example}
\def\sfigw{.18\textwidth}
\def\sfigh{.09\textwidth}
\begin{figure}[!t]
	\centering
	\footnotesize
	\makebox[\sfigw]{$u_1$}
	\makebox[\sfigw]{$u_2$}
	\makebox[\sfigw]{$u_3$}
	\makebox[\sfigw]{$u_4$}
	\makebox[\sfigw]{$u_5$}\\
	\quad
	\includegraphics[width=\sfigw]{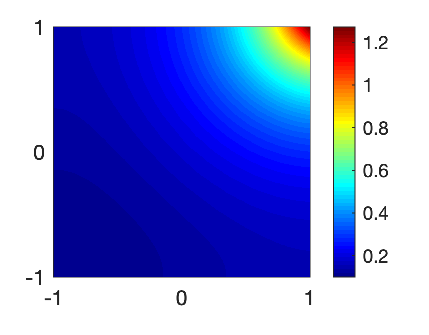}
	\includegraphics[width=\sfigw]{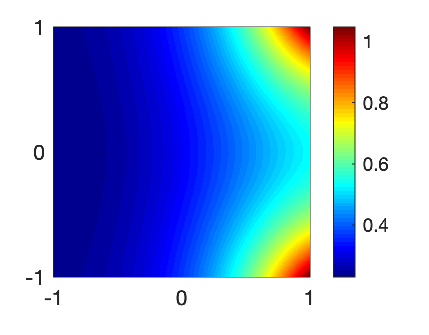}
	\includegraphics[width=\sfigw]{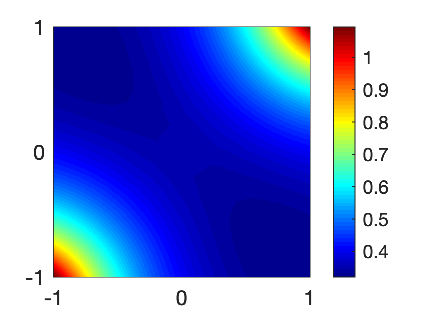}
	\includegraphics[width=\sfigw]{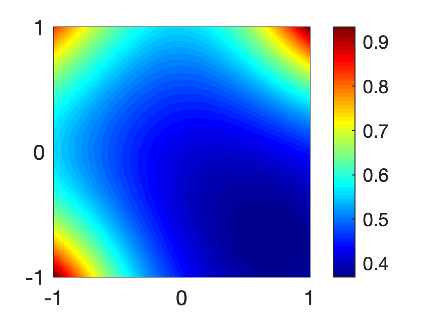}
	\includegraphics[width=\sfigw]{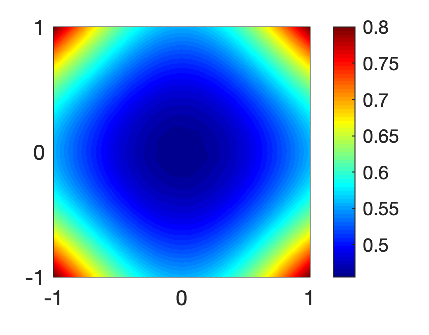} \\
	\resizebox*{\sfigw}{\sfigh}{\includegraphics{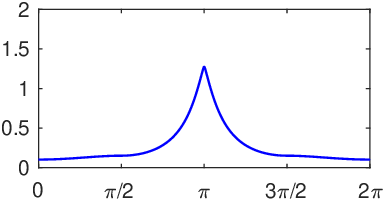}}
	\resizebox*{\sfigw}{\sfigh}{\includegraphics{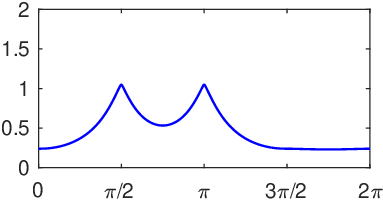}}
	\resizebox*{\sfigw}{\sfigh}{\includegraphics{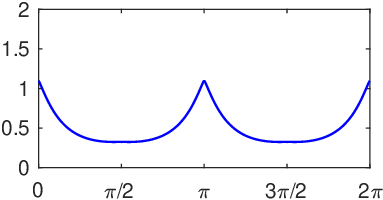}}
	\resizebox*{\sfigw}{\sfigh}{\includegraphics{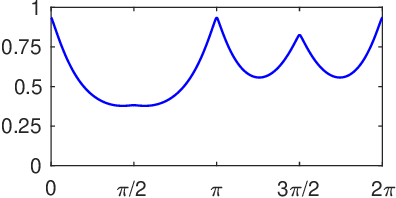}}
	\resizebox*{\sfigw}{\sfigh}{\includegraphics{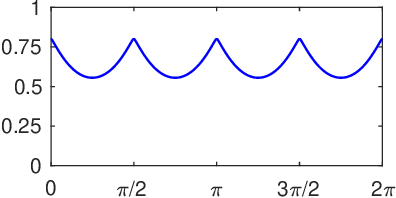}}
	\makebox[\sfigw]{$u_6$}
	\makebox[\sfigw]{$u_7$}
	\makebox[\sfigw]{$u_8$}
	\makebox[\sfigw]{$u_9$}
	\makebox[\sfigw]{$u_{10}$}\\
	\quad
	\includegraphics[width=\sfigw]{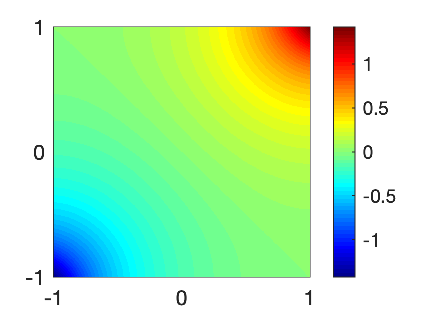}
	\includegraphics[width=\sfigw]{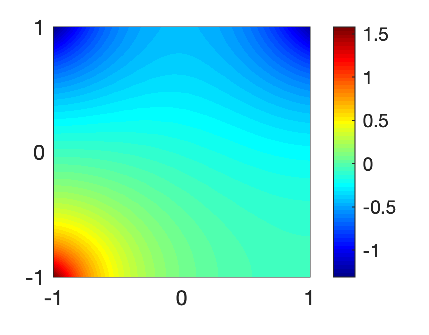}
	\includegraphics[width=\sfigw]{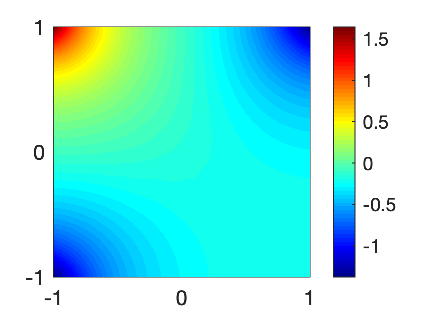}
	\includegraphics[width=\sfigw]{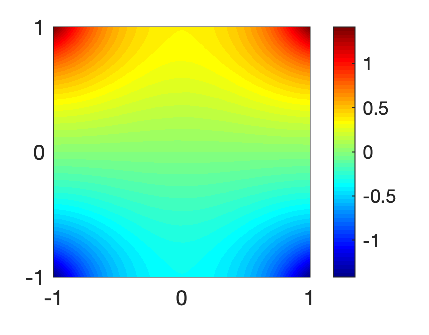}
	\includegraphics[width=\sfigw]{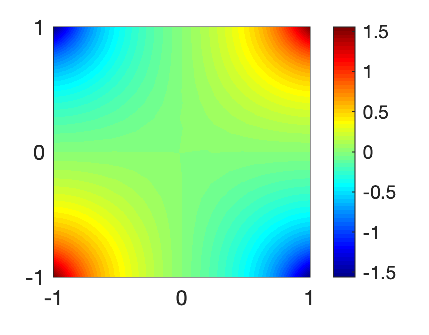} \\
	\resizebox*{\sfigw}{\sfigh}{\includegraphics{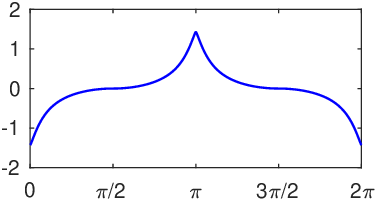}}
	\resizebox*{\sfigw}{\sfigh}{\includegraphics{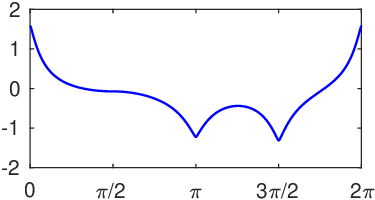}}
	\resizebox*{\sfigw}{\sfigh}{\includegraphics{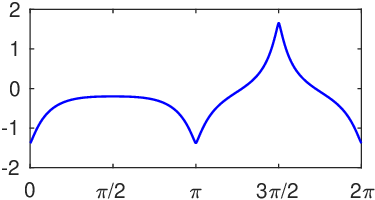}}
	\resizebox*{\sfigw}{\sfigh}{\includegraphics{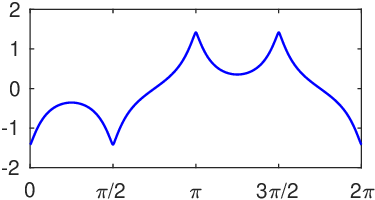}}
	\resizebox*{\sfigw}{\sfigh}{\includegraphics{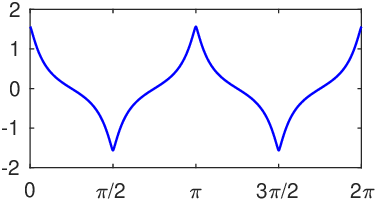}}
	\caption{Profiles of $u_1\sim u_{10}$ in Example~\ref{ex:nlbc_rect} inside the domain (first and third rows) and on the boundary (second and fourth rows). In each subplot in second and fourth rows, the horizontal axis represents the value of the  parameter $\theta$ ($0\leq\theta\leq 2\pi$) as described in \eqref{eq:nlbc_rect_theta}. Particularly, $\theta=0$ ($2\pi$), $\pi/2$, $\pi$ and $3\pi/2$ correspond to corner points $(-1,-1)$, $(1,-1)$, $(1,1)$ and $(-1,1)$, respectively.}
	\label{fig:nlbc_rect_u1-10}
\end{figure}
\begin{table}[!ht]
	\centering
	\small
	\caption{The initial information and energy functional value for each solution in Example~\ref{ex:nlbc_rect}.}
	\label{tab:nlbc_rect}
	\begin{tabular}{|c|c|c|c|c|c|}
		\hline
		$u_n$ & $u_1$ & $u_2$ & $u_3$ & $u_4$ & $u_5$ \\ \hline
		$L$ & $\{0\}$ & $\{0\}$ & $\{0\}$ & $[u_1,u_3]$ & $\{0\}$ \\ \hline
		$\rho_0(\x(\theta))$ & $1-\cos\theta$ & $1+\sin(\theta-\pi/4)$ & $1+\cos2\theta$ & $-\cos\theta$ & $1$ \\ \hline
		$E(u_n)$ & 0.2128 & 0.3068 & 0.3364 & 0.3550 & 0.3658 \\
		\hline
		\hline
		$u_n$ & $u_6$ & $u_7$ & $u_8$ & $u_9$ & $u_{10}$ \\ \hline
		$L$ & $\{0\}$ & $[u_1,u_3]$ & $[u_1,u_6]$ & $[u_1,u_2,u_3]$ & $\{0\}$ \\ \hline
		$\rho_0(\x(\theta))$ & $-\cos\theta$ & $-\cos2\theta$ & $1-\sin\theta$ & $1+\sin\theta$ & $\cos2\theta$ \\ \hline
		$E(u_n)$ & 0.5233 & 0.7474 & 0.8429 & 1.0411 & 1.2550 \\
		\hline
	\end{tabular}
\end{table}

\section{Concluding remarks}
\label{sec:con}
A novel nonmonotone LMM was proposed in this paper for finding multiple saddle points of general nonconvex functionals in Hilbert spaces by improving the traditional LMMs with nonmonotone step-size search rules. Actually, as a typical and efficient nonmonotone search rule, the normalized ZH-type nonmonotone step-size search rule was proposed for the LMM. The global convergence of the normalized ZH-type LMM was rigorously verified under the same assumptions as those in \cite{Z2017CAMC} for the normalized Armijo-type LMM. Specifically, an efficient GBBLMM was designed to speed up the convergence by combining the Barzilai--Borwein-type step-size method with the nonmonotone globalization. By applying the GBBLMM to find multiple solutions of two typical semilinear BVPs with variational structures, abundant numerical results were obtained to verify that our approach is efficient and can greatly improve the convergence rate of traditional LMMs.

As a final remark, we point out that some other feasible nonmonotone step-size search rules can be introduced to the LMM by following the lines of this paper. Typically, motivated by the GLL nonmonotone line search strategy in the optimization theory \cite{GLL1986SINUM}, one can design the following normalized GLL-type nonmonotone step-size search rule for the LMMs, that is, for $k=0,1,\ldots$, to find the step-size $\alpha=\lambda_k\rho^{m_k}$ with $m_k$ the smallest nonnegative integer satisfying
\begin{equation}\label{eq:gll}
	E(p(v_k(\alpha))) \leq \max_{1\leq j\leq\min\{M, k+1\}}E(p(v_{k+1-j}))-\sigma\alpha t_k\|g_k\|^2,
\end{equation}
where $M\geq 1$, $\sigma,\rho\in(0,1)$ and $\lambda_k>0$ are given parameters and other notations are the same as in Sect.~\ref{sec:nmlmm}. The integer $M\geq 1$ in \eqref{eq:gll} is to control the degree of the nonmonotonicity. Especially, if $M=1$, \eqref{eq:gll} degenerates to the normalized monotone Armijo-type step-size search rule stated in \eqref{eq:ak-armijo}. When $M>1$, it does not require the monotone decrease of the energy functional values. As a straightforward conclusion of Lemma~\ref{lem:armijo}, we can obtain immediately the feasibility of the GLL-type nonmonotone LMM, i.e., the LMM with the normalized GLL-type nonmonotone step-size search rule \eqref{eq:gll}. Moreover, by an analogous argument used in the proof of Theorem~\ref{thm:cvg-zhlmm}, we can prove that, under some assumptions, the sequence $\{w_k=p(v_k)\}_{k=0}^{\infty}$ generated by the GLL-type nonmonotone LMM must tend to a new critical point not in $L$ if it converges. Nevertheless, due to the more severe nonmonotonicity of the normalized GLL-type nonmonotone step-size search rule and the lack of some crucial properties for the GLL-type nonmonotone LMM, it seems that there are some potential difficulties to establish the global convergence of the whole sequence $\{w_k\}$. As the convergence result is weaker than that for the ZH-type nonmonotone LMM, we omit the details for brevity.

\paragraph{Acknowledgments.}
This work was supported by the NSFC grants 12171148 and 11771138. Liu's work was also partially supported by the NSFC grants 12101252 and 11971007. Yi's work was also partially supported by the NSFC grant 11901185, National Key R\&D Program of China (No. 2021YFA1001300) and the Fundamental Research Funds for the Central Universities 531118010207. The authors would like to thank Dr. Yongjun Yuan for his helpful discussions and useful suggestions.

\appendix

\section{Proof of Theorem~\ref{thm:lmt}}\label{app:prf-lmt}\normalsize
Set $c=\inf_{v\in \mathcal{V}_0}E(p(v))>-\infty$. Since $\mathcal{V}_0$ is a closed metric subspace of $X$ and $E(p(v))$ is continuous and bounded from below on $\mathcal{V}_0$, by the Ekeland's variational principle, for any $n\in\N_+$, there exists $v_{n}\in \mathcal{V}_0$ such that $E(p(v_{n}))<c+1/n$ and
\begin{equation}\label{eq:lmt:EEn}
	E(p(v))>E(p({v_n}))-\frac{1}{n}\|v-{v_n}\|,\quad \forall\, v\in \mathcal{V}_0\backslash\{{v_n}\}.
\end{equation}
If $g_n=\nabla E(p(v_n))\neq0$, from Lemma~\ref{lem:armijo}, for some $\sigma\in(0,1)$ and sufficiently small $\alpha>0$, we have $v_n(\alpha)=\frac{v_n-\alpha g_n}{\|v_n-\alpha g_n\|}\in\mathcal{V}_0\backslash\{v_n\}$ ($n\in\N_+$) and
\begin{equation}\label{eq:lmt:EEg}
	E(p(v_n(\alpha)))-E(p(v_n))<-\sigma\alpha t_{v_n}\|g_n\|^2 < -\sigma\delta\|g_n\|\|v_n(\alpha)-v_n\|,
\end{equation}
where \eqref{eq:va-v} and the assumption (ii) are used.
Combining \eqref{eq:lmt:EEg} and \eqref{eq:lmt:EEn} with $v=v_n(\alpha)$, one gets
\begin{equation}\label{eq:lmt:dEn}
	\|E'(p(v_n))\|_*=\|\nabla E(p(v_n))\|<\frac{1}{\sigma\delta n},\quad n\in\N_+,
\end{equation}
where $\|\cdot\|_*$ denotes the dual norm to $\|\cdot\|$. Obviously, \eqref{eq:lmt:dEn} also holds for $g_n=\nabla E(p(v_n))=0$. Hence, $E(p(v_n))\to c$ and $E'(p(v_n))\to0$ as $n\to\infty$. Then, by virtue of the (PS) condition, $\{p(v_n)\}$ possesses a subsequence $\{p(v_{n_i})\}$ converging to some $u_*\in X$ that satisfies $E'(u_*)=0$ and $E(u_*)=c$.

Next, we will show that there exists $v_*\in \mathcal{V}_0$ such that $u_*=p(v_*)$. Denote $t_n=t_{v_n}$ and $w_n^L=w_{v_n}^L$, then $p(v_n)=t_n v_n+w_n^L$. By employing orthogonal decompositions $v_n=v_n^\bot+v_n^L$ and $u_*=u_*^\bot+u_*^L$, where $v_n^\bot,u_*^\bot\in L^\bot$, $v_n^L,u_*^L\in L$, we arrive at
\[ \|p(v_{n_i})-u_*\|^2=\|t_{n_i}v_{n_i}^\bot-u_*^\bot\|^2+\|t_{n_i}v_{n_i}^L+w_{n_i}^L-u_*^L\|^2\to0,\quad \mbox{as }i\to\infty, \]
which immediately yields $t_{n_i}v_{n_i}^\bot\to u_*^\bot$ as $i\to\infty$. Since $\{v_{n_i}\}\subset\mathcal{V}_0$, there exists $\tau_{n_i}\in[0,1]$ such that $v_{n_i}^L=\tau_{n_i}v_0^L$ and
\begin{equation}\label{eq:lmt:vni}
	v_{n_i}=v_{n_i}^\bot + \tau_{n_i}v_0^L\in \mathcal{V}_0\subset S\backslash L,\quad \forall i.
\end{equation}
Then, it is clear that $0<a_0:= \sqrt{1-\|v_0^L\|^2}\leq \sqrt{1-\tau_{n_i}^2\|v_0^L\|^2}=\|v_{n_i}^\bot\|\leq1$. Thus, there is a subsequence $\{v_{n_i'}\}\subset\{v_{n_i}\}$ such that $\|v_{n_i'}^\bot\|\to a_*$ as $i\to\infty$ for some $a_*\in[a_0,1]$, and therefore
\[ t_{n_i'}=\frac{\|t_{n_i'}v_{n_i'}^\bot\|}{\|v_{n_i'}^\bot\|}\to\frac{\|u_*^\bot\|}{a_*},\quad \mbox{as }i\to\infty. \]
Further, the assumption (ii) leads to $\|u_*^\bot\|\geq\delta a_*>0$, and we have
\begin{equation}\label{eq:lmt:vnip-orth}
	v_{n_i'}^\bot=\frac{1}{t_{n_i'}}(t_{n_i'}v_{n_i'}^\bot) \to \frac{a_*}{\|u_*^\bot\|}u_*^\bot,\quad \mbox{as }i\to\infty.
\end{equation}
Note that, due to the fact $\{\tau_{n_i'}\}\subset[0,1]$, there is a subsequence $\{v_{n_i''}\}\subset\{v_{n_i'}\}$ such that
\begin{equation}\label{eq:lmt:vnipp-tau}
	\tau_{n_i''}\to\tau_*,\quad \mbox{as }i\to\infty,
\end{equation}
for a $\tau_*\in[0,1]$. Since $\{v_{n_i''}\}\subset\{v_{n_i'}\}\subset\{v_{n_i}\}$ and $\mathcal{V}_0$ is closed, combining \eqref{eq:lmt:vni}-\eqref{eq:lmt:vnipp-tau} yields 
\[ v_{n_i''}=v_{n_i''}^\bot + \tau_{n_i''}v_0^L\to v_*:=\frac{a_*}{\|u_*^\bot\|}u_*^\bot+\tau_*v_0^L\in \mathcal{V}_0,\quad \mbox{as }i\to\infty. \]
Then, the continuity of $p$ and $p(v_{n_i''})\to u_*$ as $i\to\infty$ imply $u_*=p(v_*)$.

Finally, recalling above facts that $E(u_*)=c$, $E'(u_*)=0$ and $\|u_*^\bot\|\geq\delta a_*>0$, we conclude that $p(v_*)\notin L$ is a critical point of $E$ and $E(p(v_*))=\inf_{v\in \mathcal{V}_0}E(p(v))$. The proof is finished.

\section{Two adaptive strategies for the BB-type step-sizes}\label{app:adapBB}
Motivated by the adaptive BB methods in optimization theory in Euclidean spaces developed in \cite{DHL2019COA} and \cite{HDL2021SIOPT}, we provide the following two adaptive strategies of BB-type step-sizes to compute $\lambda_k$ ($k\geq1$) in {Step~4} of Algorithm~\ref{alg:gbblmm}:
\begin{itemize}
\item {\bfseries Adaptive strategy I (Adap1).} Compute $\lambda_k$ according to \eqref{eq:lambdak-bb} with
\begin{align}
\alpha_k^{\text{BB}}&=
\begin{cases}
\alpha_k^{\text{BB2}}, & \mbox{if } \mathrm{mod}(k,k_s)=0,\\
\tilde{\alpha}_k, & \mbox{otherwise},
\end{cases}
\end{align}
where $k_s>1$ is a given positive integer and $\tilde{\alpha}_k\in[\alpha_k^{\text{BB1}},\alpha_k^{\text{BB2}}]$ is defined as
\begin{align}
\tilde{\alpha}_k=\begin{cases}
\alpha_k^{\text{BB1}}, & \mbox{if } \alpha_{k-1}\leq\alpha_k^{\text{BB1}},\\
\alpha_k^{\text{BB2}}, & \mbox{if } \alpha_{k-1}\geq\alpha_k^{\text{BB2}},\\
\alpha_{k-1}, & \mbox{otherwise},
\end{cases}
\end{align}
with $\alpha_{k-1}$ the step-size used in the previous iterative step and $\alpha_k^{\text{BB1}}$ and $\alpha_k^{\text{BB2}}$, respectively, the short and long BB-type step-sizes given in \eqref{eq:bbss}. In our experiments the parameter $k_s$ is fixed as $k_s=8$, as suggested by the numerical results in \cite{DHL2019COA}.

\item {\bfseries Adaptive strategy II (Adap2).} Compute $\lambda_k$ according to \eqref{eq:lambdak-bb} with $\alpha_1^{\text{BB}}=\alpha_1^{\text{BB2}}$ and
\begin{align}
\alpha_k^{\text{BB}}=
\begin{cases}
\min\big\{\widehat{\alpha}_k, \alpha_k^{\text{BB1}},\alpha_{k-1}^{\text{BB1}}\big\}, & \mbox{if } \alpha_k^{\text{BB1}}/\alpha_k^{\text{BB2}}<\tau_k, \\
\alpha_k^{\text{BB2}}, & \mbox{otherwise},
\end{cases}\quad (k\geq2),
\end{align}
where $\widehat{\alpha}_k$ ($k\geq2$) is defined as \cite{HDL2021SIOPT}
\[ 
\widehat{\alpha}_k=\frac{2}{b_k+\sqrt{b_k^2-4a_k}},\;
a_k=\frac{\alpha_{k-1}^{\text{BB1}}-\alpha_k^{\text{BB1}}}{\alpha_{k-1}^{\text{BB1}}\alpha_k^{\text{BB1}}(\alpha_{k-1}^{\text{BB2}}-\alpha_k^{\text{BB2}})}, \;
b_k=\frac{\alpha_{k-1}^{\text{BB2}}\alpha_{k-1}^{\text{BB1}}-\alpha_k^{\text{BB2}}\alpha_k^{\text{BB1}}}{\alpha_{k-1}^{\text{BB1}}\alpha_k^{\text{BB1}}(\alpha_{k-1}^{\text{BB2}}-\alpha_k^{\text{BB2}})},
\]
and $\tau_k$ dynamically updated by
\[ \tau_{k+1}=\begin{cases}
\tau_k/\gamma, & \mbox{if } \alpha_k^{\text{BB1}}/\alpha_k^{\text{BB2}}<\tau_k, \\
\tau_k\gamma, & \mbox{otherwise}
\end{cases} \]
for some given $\tau_0\in(0,1)$ and $\gamma>1$. Following the numerical computations in \cite{HDL2021SIOPT}, $\tau_0=0.2$ and $\gamma=1.02$ are adopted in our numerical experiments.
\end{itemize}


\end{document}